\pgfplotsset{compat=1.11} 
\newtheorem{theorem}{Theorem}
\newtheorem{lemma}[theorem]{Lemma}
\newtheorem{remark}[theorem]{Remark}
\newtheorem{coro}[theorem]{Corollary}
\newcommand{\RR}{\mathbb{R}}
\newcommand{\NN}{\mathbb{N}}
\newcommand{\eps}{\varepsilon}
\newcommand{\dps}{\displaystyle}
\definecolor{blue}{HTML}{1F77B4}
\definecolor{orange}{HTML}{FF7F0E}
\definecolor{green}{HTML}{2CA02C}
\begin{document}
\date{\today}
\author{Olga Gorynina, Claude Le Bris and Frédéric Legoll
\\
{\footnotesize École Nationale des Ponts et Chaussées and Inria Paris,} \\
{\footnotesize 6 et 8 avenue Blaise Pascal, 77455 Marne-La-Vall\'ee Cedex 2, France} \\
{\footnotesize \tt \{olga.gorynina,claude.le-bris,frederic.legoll\}@enpc.fr}\\
}
\title{Mathematical analysis of a coupling method for the practical computation of homogenized coefficients}

\maketitle

\begin{abstract}
We present the mathematical study of a computational approach originally introduced by R.~Cottereau in~\cite{cottereau}. The approach aims at evaluating the effective (a.k.a. homogenized) coefficient of a medium with some fine-scale structure. It combines, using the Arlequin coupling method, the original fine-scale description of the medium with an effective description and optimizes upon the coefficient of the effective medium to best fit the response of an equivalent purely homogeneous medium. We prove here that the approach is mathematically well-posed and that it provides, under suitable assumptions, the actual value of the homogenized coefficient of the original medium in the limit of asymptotically infinitely fine structures. The theory presented here therefore usefully complements our numerical developments of~\cite{ref_olga_comp}.
\end{abstract}

\section{Introduction}

The work~\cite{cottereau} has introduced a domain decomposition approach for the specific purpose of approximating the homogenized coefficient of a heterogeneous medium. In short, the approach consists in dividing the computational domain in two overlapping subdomains (see Figure~\ref{fig:decompo_left} below). The first, inner subdomain explicitly accounts for the fine-scale structure. In the second, outer subdomain, an effective medium is considered. The two subdomains overlap, typically over an annular layer, where both models, the fine-scale model and the effective model, coexist. Suitable boundary conditions are imposed on the outer boundary of the domain. The bottom line of the approach then consists in optimizing upon the coefficient of the effective medium in order to best fit the response that would be obtained if the effective coefficient employed were the actual homogenized coefficient corresponding to the fine scale structure. The approach thus provides a computational strategy to approximate the homogenized coefficient which is an alternative to standard homogenization techniques. In particular, and in the same vein as some other approaches previously proposed in the literature~\cite{BouQW:88,Durlo:91,cocv,cras_kun_li}, it does not require computing the usual ingredients of homogenization such as corrector functions before providing an approximation of the homogenized coefficient. We refer to the companion article~\cite{ref_olga_comp} for more details motivating the approach of~\cite{cottereau}.

\medskip

Let us at once say that, in short, the conclusion of the mathematical study conducted herein is that the method introduced in~\cite{cottereau} from a purely computational perspective is mathematically sound.

\medskip

In some more details, our study follows the following pattern. We work, as in~\cite{cottereau,ref_olga_comp}, on the simple, linear, in divergence form diffusion equation
\begin{equation} \label{eq:diffusion}
  -{\rm div} \left( k_\eps \nabla u_\eps \right) = f,
\end{equation}
which is posed in $\Omega$, a bounded domain of $\RR^d$. We assume for simplicity of exposition that $d=2$, but our mathematical study carries over to a higher dimensional setting in a straightforward way. The practically relevant case is of course $d=3$, but we wish to spare the reader the required adjustments of our arguments. In~\eqref{eq:diffusion}, the coefficient $k_\eps$ models the fine-scale structure of the actual medium (typically a complex material) considered, the effective coefficient of which we aim at approaching. The parameter $\eps>0$, presumably small, encodes the size of the fine-scale structure, supposedly tiny: $\eps \ll 1$. The coefficient $k_\eps$ may be scalar-valued, or matrix-valued. Our study is actually insensitive to this distinction, and we hence assume throughout our article that $k_\eps$ is matrix-valued. Furthermore, we assume that $k_\eps$ is a {\em symmetric} matrix. This assumption will be used e.g. to write the Euler-Lagrange equations of the optimization problem~\eqref{eq:pb_min} in the form of~\eqref{eq:arlequin_var}, and when writing the estimate~\eqref{eq:vraiment_energie}.

In addition, we assume the following classical boundedness and coercivity conditions:
\begin{equation} \label{eq:boundedness+coercivity}
\forall \xi \in \RR^2, \quad k_\eps(x) \xi \cdot \xi \geq c_1 |\xi|^2 \quad \text{and} \quad | k_\eps(x) \xi | \leq c_2 |\xi| \qquad \text{a.e. on $\Omega$},
\end{equation}
for two constants $c_1>0$, $c_2>0$ independent from $\eps$.

The computational approach introduced in~\cite{cottereau} and briefly outlined above can then be put in action for any such coefficient and any value of the parameter $\eps$. In theory though, the existence of a homogenized coefficient $k^\star$, and, foremost, the existence of a coefficient $k^\star$ \emph{amenable to practical computations}, so that equation~\eqref{eq:diffusion} converges in the limit of asymptotically small parameters $\eps$ to a homogenized equation of the type
\begin{equation} \label{eq:homogenized-equation}
  -{\rm div}\left(k^\star \nabla u^\star\right) = f,
\end{equation}
for some homogenized coefficient $k^\star$, requires more stringent assumptions on $k_\eps$. In our study, we will assume $k_\eps$ is of the form
\begin{equation} \label{eq:structure-k}
  k_\eps(x) = k_{\rm per}(x/\eps),
\end{equation}
for some fixed coefficient $k_{\rm per}$, which we further assume \emph{periodic}, as a prototypical case of a large class of adequate structures for which quantitative homogenization holds. For instance, the theoretical setting and the computational approach (see~\cite{cottereau,ref_olga_comp}) carry over to the case of a \emph{random stationary} coefficient $k_\eps$. The homogenized coefficient $k^\star$, which in the case~\eqref{eq:structure-k} of a periodic coefficient is \emph{constant}, may be, like the coefficient $k_\eps$, scalar-valued or matrix-valued, the latter case being even possible despite the fact that $k_\eps$ is scalar-valued. In the most part of our mathematical study below, we assume for simplicity that $k^\star$ is scalar-valued. In Appendix~\ref{seq:matrix}, we make precise how our arguments should be modified to, for the most part, carry over to the case of a matrix-valued homogenized coefficient.

\medskip

The approach of~\cite{cottereau} considers, to begin with, a coupling of the two equations~\eqref{eq:diffusion} for the actual fine-scale coefficient $k_\eps$, and
\begin{equation} \label{eq:constant-equation}
  -{\rm div}\left(\overline{k} \nabla \overline{u} \right) = f,
\end{equation}
which corresponds to the homogenized equation~\eqref{eq:homogenized-equation} for a tentative value $\overline{k}$ of the, beforehand unknown, coefficient $k^\star$. As said above, the equations are respectively posed in an inner and an outer subdomain that surrounds the former (see Figure~\ref{fig:decompo_left} below). The coupling is performed in an overlapping region, and encodes the fact that the solution $u_\eps$ to~\eqref{eq:diffusion} agrees ``on average'' (the meaning of that term is made precise in~\eqref{eq:def_C}--\eqref{eq:constraint} below) with the solution $\overline{u}$ to~\eqref{eq:constant-equation} within the overlapping region. More specifically, the computational implementation of this coupling is performed using the now classical Arlequin method, a popular approach in computational mechanics, which has been introduced in~\cite{dhia1998multiscale,2005arlequin,cottereau_sto,rateau} and which we recall in Section~\ref{sec:arlequin} below. Suitable boundary conditions are imposed on the outer boundary of the domain. These boundary conditions are typically linear Dirichlet boundary conditions. The solution to the coupled system is consequently computed, using an adequate finite element type discretization. The necessary details are presented in Section~\ref{sec:disc}.

A cost function is then evaluated. It measures to which extent the solution obtained differs from the solution obtained for an entirely homogeneous medium. The tentative value of $\overline{k}$ is updated correspondingly (by minimizing this cost function, see~\eqref{eq:optim_J}--\eqref{eq:def_J} below) and the above process is repeated until consistency is obtained, within the desired degree of accuracy.

\medskip

Placing the computational approach described above on a sound mathematical grounding requires to successively establish the following properties:
\begin{description}
\item[(i)] for a fixed value of $\eps$, there exists an optimized value of $\overline{k}$, denoted by $\overline{k}^{\rm opt}_\eps$, where the cost function attains its minimum.
\item[(ii)] as $\eps\to 0$, the optimal value $\overline{k}^{\rm opt}_\eps$ converges to the homogenized coefficient $k^\star$.
\end{description}
In addition, the uniqueness of the optimal value $\overline{k}^{\rm opt}_\eps$ in~{\bf (i)} may be studied.

\medskip

After presenting the computational approach in Section~\ref{sec:introductionArlequin}, we turn in Section~\ref{sec:scalar} to the analysis of the scalar case. We study each of the two above properties, respectively in Section~\ref{sec:opt} and Section~\ref{sec:homogenization} below, under suitable, somehow classical and relatively mild assumptions. And we indeed establish they both hold true, in Theorems~\ref{th:optimization} and~\ref{th:homogenization} respectively. We next establish, in Section~\ref{sec:uniqueness}, the uniqueness of the optimal coefficient $\overline{k}^{\rm opt}_\eps$ for sufficiently small values of $\eps$ (see Theorem~\ref{th:uniqueness}).

Consequently, the approach does provide, for each size of the fine-scale structure fixed, an optimal effective coefficient. In the limit of a vanishing size of the fine-scale structure, this optimal coefficient is unique and allows one to identify the actual homogenized coefficient of the medium considered.

We next turn in Appendix~\ref{seq:matrix} to the matrix-valued case, where our main results (existence of at least one optimal coefficient $\overline{k}^{\rm opt}_\eps$ and convergence of that coefficient to $k^\star$) are Theorems~\ref{th:optimization_matrix} and~\ref{th:homogenization_matrix}.

\medskip

We conclude this introductory section by mentioning that, despite the fact that the mathematical study presented here is restricted to the two-dimensional periodic setting, we believe that the arguments introduced (most of which are variational in nature) are likely to carry over to a large variety of settings: random stationary coefficients, nonlinear monotone equations, non constant slowly varying homogenized coefficients, etc. However, we have not pursued in those many directions, and definite conclusions are yet to be obtained. 

\section{Presentation of the computational approach}\label{sec:introductionArlequin}

The purpose of this section is to present in full details our mathematical framework. We start with a coupling strategy of an oscillating model with an effective one and briefly recall the basics of the Arlequin approach in Section~\ref{sec:arlequin}. We next turn to its discretized finite element formulation in Section~\ref{sec:disc}.

We assume that the sequence of oscillatory functions $k_\eps$ is uniformly coercive and bounded (see~\eqref{eq:boundedness+coercivity}). For technical reasons in the mathematical arguments below, we also assume that
\begin{equation}\label{eq:holder_continuity}
  \text{$k_{\rm per}$ is H\"{o}lder continuous}.
\end{equation}
We use this assumption to have some regularity on the correctors and on some related quantities, see the discussion below~\eqref{eq:alpha}. 

\subsection{Mathematical setting and formal description of the coupling method at the continuous level} \label{sec:arlequin}

Throughout this article, we assume that the computational domain is the two-dimensional square $\Omega = (-L,L)^2$ for some $L>0$ (see Figure~\ref{fig:decompo_left}). 
 
To begin with, we choose $L_c$ and $L_f$ such that $0 < L_f < L_c < L$ and introduce three disjoint subdomains $D$, $D_c$ and $D_f$ of the computational domain $\Omega$ such that $\overline{\Omega} = \overline{D} \cup \overline{D_c} \cup \overline{D_f}$. The inner subdomain $D_f = (-L_f,L_f)^2$ explicitly accounts for the fine-scale structure (modelled by an oscillatory coefficient $k_\eps$). This first subdomain is surrounded by a second subdomain $D_c = (-L_c,L_c)^2 \setminus [-L_f,L_f]^2$, where both models are simultaneously considered: the fine-scale structure, and the effective medium (modelled by a coefficient $\overline{k}$ chosen to be constant, since the homogenized coefficient $k^\star$ is constant in view of~\eqref{eq:structure-k}). In that second subdomain, the two models are coupled so that, in a sense made precise below, they are consistent with one another. The second subdomain is surrounded by a third subdomain $D$, where only the effective medium is considered. We denote $\Gamma$ the exterior boundary of $D$ (see Figure~\ref{fig:decompo_left}), on which we impose (non-homogeneous) Dirichlet boundary conditions, see~\eqref{eq:pb_min}. We also introduce the boundaries $\Gamma_f = \partial D_f$ and $\Gamma_c = \partial(\overline{D_c} \cup \overline{D_f})$.

We note that, for the sake of simplicity, we work with square domains $D$, $D_c$ and $D_f$. However, one could consider more general cases with polygonal domains. In the same spirit, $D_f$ does not need to lie exactly in the center of $D_c$, and $D_c$ does not need to lie exactly in the center of $D$, nor be exactly equally thick on each side of $D_f$, \dots

In what follows, we assume that the boundary $\Gamma_f$ (resp. $\Gamma_c$) is the union of $N_f$ straight edges (resp. $N_c$ straight edges), where the number $N_f$ (resp. $N_c$) of edges is independent of $\eps$:
\begin{equation}\label{eq:boundaries}
  \Gamma_f = \mathop{\bigcup}_{1 \leq j \leq N_f} \widetilde{e}_j, \qquad \qquad
  \Gamma_c = \mathop{\bigcup}_{N_f+1 \leq j \leq N_f+N_c} \widetilde{e}_j.  
\end{equation}
This assumption is of course satisfied for any polygonal domains $D_c$ and $D_f$.

\begin{figure}[htbp]
  \vspace{-3pt}
  \centering
  \includegraphics[width=0.8\textwidth]{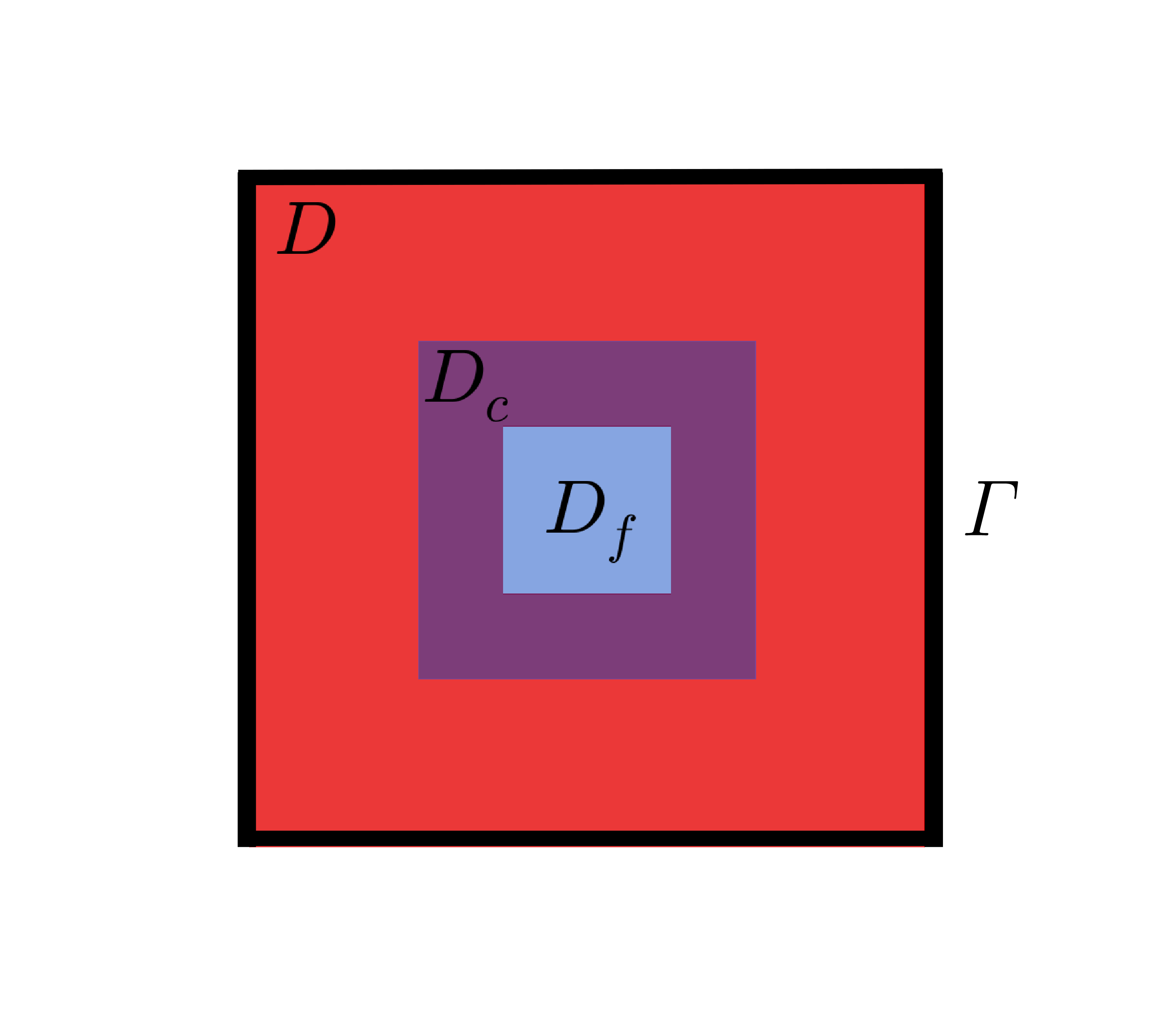}
  \caption{Decomposition of the computational domain into three disjoint subdomains: a subdomain $D$ where only the effective model is defined, a subdomain $D_f$ where only the fine model is defined and a subdomain $D_c$ where both models are defined and over which they are coupled (the subscripts $f$ and $c$ obviously stand for ``fine'' and ``coupled''). Dirichlet boundary conditions are imposed on the exterior boundary of $D$, which is denoted $\Gamma$ and which is represented by black thick lines. \label{fig:decompo_left}}
  \vspace{10pt}
\end{figure}

\bigskip

If it were to be formulated at the continuous level, the Arlequin method applied to~\eqref{eq:diffusion} and~\eqref{eq:constant-equation} would consist in considering the following minimization problem:
\begin{equation} \label{eq:pb_min}
  \inf \left\{ \begin{array}{c} {\cal E}(\overline{u},\widecheck{u}_\eps), \quad \overline{u} \in H^1(D \cup D_c), \quad \overline{u}(x) = x_1 \ \text{on $\Gamma$}, \\ \noalign{\vskip 2pt} \widecheck{u}_\eps \in H^1(D_c \cup D_f), \qquad {\cal C}(\overline{u}-\widecheck{u}_\eps,\phi) = 0 \ \ \text{for any $\phi \in H^1(D_c)$} \end{array} \right\},
\end{equation}
where the constraint function ${\cal C}$ and the energy ${\cal E}$ are defined as follows. The energy ${\cal E}$ is the sum of the contributions of each of the three subdomains:
\begin{multline}\label{eq:def_E}
  {\cal E}(\overline{u},\widecheck{u}_\eps) = \frac{1}{2} \int_D \overline{k} \, \nabla \overline{u}(x) \cdot  \nabla \overline{u}(x) + \frac{1}{2} \int_{D_f} k_\eps(x) \, \nabla \widecheck{u}_\eps(x) \cdot \nabla \widecheck{u}_\eps(x) \\ + \frac{1}{2} \int_{D_c} \Big( \frac{1}{2} \, \overline{k} \, \nabla \overline{u}(x) \cdot \nabla \overline{u}(x) + \frac{1}{2} \, k_\eps(x) \, \nabla \widecheck{u}_\eps(x) \cdot \nabla \widecheck{u}_\eps(x) \Big).
\end{multline}
The last term in ${\cal E}$ accounts for the energy in the domain $D_c$, where the two models co-exist and are equally weighted (thus the factor $1/2$ in the integrand). Other choices of weights are possible, as discussed in~\cite{ref_olga_comp}. We will not consider them hereafter.

In~\eqref{eq:pb_min}, the constraint function ${\cal C}$ is defined by 
\begin{equation} \label{eq:def_C}
  \forall u \in H^1(D_c), \quad \forall \phi \in H^1(D_c), \quad {\cal C}(u,\phi) = \int_{D_c} \nabla u \cdot \nabla \phi + u \, \phi.
\end{equation}
We hence see that the constraint amounts to $\overline{u} = \widecheck{u}_\eps$ on $D_c$ (and that the precise expression of ${\cal C}$ does not matter).

However, the Arlequin approach (in this context, as well as in more general contexts) is to be put in action at the discretized level, as we will see in Section~\ref{sec:disc}. In the latter context, the constraint in~\eqref{eq:pb_min} is transformed in a milder constraint that only imposes that $\overline{u}$ and $\widecheck{u}_\eps$ agree on average (see~\eqref{eq:constraint} below). 

\begin{remark}
  Alternative choices for the boundary conditions on $\Gamma$ in~\eqref{eq:pb_min} could be made (see Remark~\ref{rmk:reference} below for instance).
\end{remark}

\begin{remark}
  In~\eqref{eq:pb_min} and throughout this article, the notation $H^1(D \cup D_c)$ actually stands for the space $H^1(D \cup D_c \cup \Gamma_c)$ (where we recall that $\Gamma_c = \partial(\overline{D_c} \cup \overline{D_f})$), so that the trace on $\Gamma_c$ of a function of that space has the same value on both sides of $\Gamma_c$. Likewise, the notation $H^1(D_c \cup D_f)$ stands for the space $H^1(D_c \cup \overline{D_f}) = H^1(D_c \cup D_f \cup \Gamma_f)$ (where we recall that $\Gamma_f = \partial D_f$).
\end{remark}

It is easy to show (upon considering minimizing sequences and using the strong convexity of ${\cal E}$) that, for any positive definite symmetric matrix $\overline{k}$, problem~\eqref{eq:pb_min} has a unique minimizer.

Solving the minimization problem~\eqref{eq:pb_min} is equivalent to solving the following variational formulation: find $\overline{u} \in H^1(D \cup D_c)$ with $\overline{u}(x) = x_1$ on $\Gamma$, $\widecheck{u}_\eps \in H^1(D_c \cup D_f)$ and $\psi \in H^1(D_c)$ such that
\begin{equation} \label{eq:arlequin_var}
  \begin{cases}
  \forall \overline{v} \in V^0, \quad & \overline{A}_{\overline{k}}(\overline{u},\overline{v}) + {\cal C}(\overline{v},\psi) = 0,
  \\ \noalign{\vskip 3pt}
  \forall \widecheck{v} \in H^1(D_c \cup D_f), \quad & \widecheck{A}_{k_\eps}(\widecheck{u}_\eps,\widecheck{v}) - {\cal C}(\widecheck{v},\psi) = 0,
  \\ \noalign{\vskip 3pt}
  \forall \phi \in H^1(D_c), \quad & {\cal C}(\overline{u}-\widecheck{u}_\eps,\phi) = 0,
  \end{cases}
\end{equation}
where
$$
V^0 = \left\{ v \in H^1(D \cup D_c), \quad \text{$v(x)=0$ on $\Gamma$} \right\},
$$
and where the bilinear forms $\overline{A}_{\overline{k}}$ and $\widecheck{A}_{k_\eps}$ are respectively defined by
\begin{align}
  \label{eq:def_Abar}
  \overline{A}_{\overline{k}}(\overline{u},\overline{v}) &= \int_D \overline{k} \, \nabla \overline{u}(x) \cdot \nabla \overline{v}(x) + \frac{1}{2} \int_{D_c} \overline{k} \, \nabla \overline{u}(x) \cdot \nabla \overline{v}(x),
  \\
  \label{eq:def_Aeps}
  \widecheck{A}_{k_\eps}(\widecheck{u},\widecheck{v}) &= \frac{1}{2}\int_{D_c} k_\eps(x) \, \nabla \widecheck{u}(x) \cdot \nabla \widecheck{v}(x) + \int_{D_f} k_\eps(x) \, \nabla \widecheck{u}(x) \cdot \nabla \widecheck{v}(x).
\end{align}
Of course, the three components $\overline{u}$, $\widecheck{u}_\eps$ and $\psi$ of the solution to~\eqref{eq:arlequin_var} all depend on $\eps$. To keep the notation light, we have made this dependency explicit only for $\widecheck{u}_\eps$ to recall that this function oscillates at the scale $\eps$ (in contrast to $\overline{u}$ and $\psi$, which are meant to be coarse-scale functions, and that will be discretized on a {\em coarse} mesh, see Section~\ref{sec:disc} below).

\medskip

Although we will not specifically use this form, it is illustrative to write the strong form of the optimality system~\eqref{eq:arlequin_var}:
$$
  \begin{cases}
    -{\rm div} \left( \overline{k} \nabla \overline{u} \right) = 0 & \text{in $D$},
    \\
    -{\rm div} \left( \overline{k} \nabla \overline{u} \right) - \Delta \psi + \psi = 0 & \text{in $D_c$},
    \\
    -{\rm div} \left( k_\eps \nabla \widecheck{u}_\eps \right) + \Delta \psi - \psi  = 0 & \text{in $D_c$},
    \\
    -{\rm div} \left( k_\eps \nabla \widecheck{u}_\eps \right) = 0 & \text{in $D_f$},
    \\
    \overline{u} = \widecheck{u}_\eps & \text{in $D_c$},
  \end{cases}
$$  
with the boundary conditions
$$
  \begin{cases}
    \dps \left(\frac{1}{2} \overline{k} \left( \nabla \overline{u} \right) \big|_{D_c} + \left( \nabla \psi  \right) \big|_{D_c} \right) \cdot n_{\Gamma_c} = \left( \overline{k} \left( \nabla \overline{u} \right) \big|_D \right) \cdot n_{\Gamma_c} & \text{on $\Gamma_c$},
    \\ \noalign{\vskip 3pt}
    \dps \left(\frac{1}{2} k_\eps \left( \nabla \widecheck{u}_\eps \right) \big|_{D_c} - \left( \nabla \psi \right) \big|_{D_c} \right) \cdot n_{\Gamma_c} = 0 & \text{on $\Gamma_c$},
    \\ \noalign{\vskip 3pt}
    \dps \left(\frac{1}{2} \overline{k} \left( \nabla \overline{u} \right) \big|_{D_c} + \left( \nabla \psi  \right) \big|_{D_c} \right) \cdot n_{\Gamma_f} = 0 & \text{on $\Gamma_f$},
    \\ \noalign{\vskip 3pt}
    \dps \left(\frac{1}{2} k_\eps \left( \nabla \widecheck{u}_\eps \right) \big|_{D_c} - \left( \nabla \psi \right) \big|_{D_c} \right) \cdot n_{\Gamma_f} = \left( k_\eps \left( \nabla \widecheck{u}_\eps \right) \big|_{D_f} \right) \cdot n_{\Gamma_f} & \text{on $\Gamma_f$},
    \\
    \overline{u}(x) = x_1 & \text{on $\Gamma$}.
  \end{cases}
$$
Here we have denoted by $n_{\Gamma_c}$ (resp. $n_{\Gamma_f}$) the unit normal vector outward to $D_c$ on the boundary $\Gamma_c$ (resp. $\Gamma_f$). We have also denoted by $\left( \nabla \widecheck{u}_\eps \right) \big|_{D_f} \cdot n_{\Gamma_f}$ the normal trace on the boundary $\Gamma_f$ of $\nabla \widecheck{u}_\eps$ seen as a function defined in the domain $D_f$.

\medskip

We now turn to the discretization of the above problem. We will work throughout this article with the discretized form, which is the practically relevant version of the problem.
We emphasize that, in the absence of any discretization, the approach of~\cite{cottereau} {\em does not} yield the value of the homogenized coefficient, as shown in~\cite[Section~2.2]{ref_olga_comp}. If we couple the heterogeneous and the homogeneous models as in~\eqref{eq:pb_min} and next optimize upon $\overline{k}$ as explained in Section~\ref{sec:opt_heuristics} below, we indeed obtain a value which is {\em different} from the homogenized coefficient $k^\star$ we seek, even after passing to the limit $\eps \to 0$.

\subsection{Discretization} \label{sec:disc}

We introduce a coarse mesh ${\cal T}_H$ (of mesh size $H>0$) in the subdomains $D$ and $D_c$ and a fine mesh ${\cal T}_h$ (of mesh size $h>0$) in the subdomains $D_c$ and $D_f$ (see Figure~\ref{fig:decompo_right}). We assume that the coarse meshes of $D$ and $D_c$ are consistent with one another on $\Gamma_c$, namely that they match on the interface (and likewise for the fine meshes of $D_c$ and $D_f$ on the interface $\Gamma_f$). We also assume that, in $D_c$, the fine mesh is a submesh of the coarse mesh. We next introduce the corresponding finite element spaces:
\begin{gather*}
  V_H = \left\{ u^H \in H^1(D \cup D_c), \quad \text{$u^H$ is piecewise affine on the coarse mesh ${\cal T}_H$} \right\},
  \\
  V_h = \left\{ u^h \in H^1(D_c \cup D_f), \quad \text{$u^h$ is piecewise affine on the fine mesh ${\cal T}_h$} \right\},
  \\
  W_H = \left\{ \phi^H \in H^1(D_c), \quad \text{$\phi^H$ is piecewise affine on the coarse mesh ${\cal T}_H$} \right\}.
\end{gather*}
For simplicity of the exposition, we work with $\mathbb{P}_1$ finite elements. Other choices of finite element spaces are however possible \emph{mutatis mutandis} for our arguments below.

The fine mesh size $h$ is assumed to be adjusted so that the discretization in $h$ accurately captures the oscillations of $k_\eps$ (a typical choice is $h \approx \eps/10$). The corresponding discrete problem is therefore expensive to solve. In contrast, the coarse mesh size $H$ can be chosen independent of $\eps$, and therefore satisfies $H \gg h$. The corresponding cost of the macro problem can thus be neglected.

\begin{figure}[htbp]
  \vspace{-1pt}
  \centering
  \includegraphics[width=0.8\textwidth]{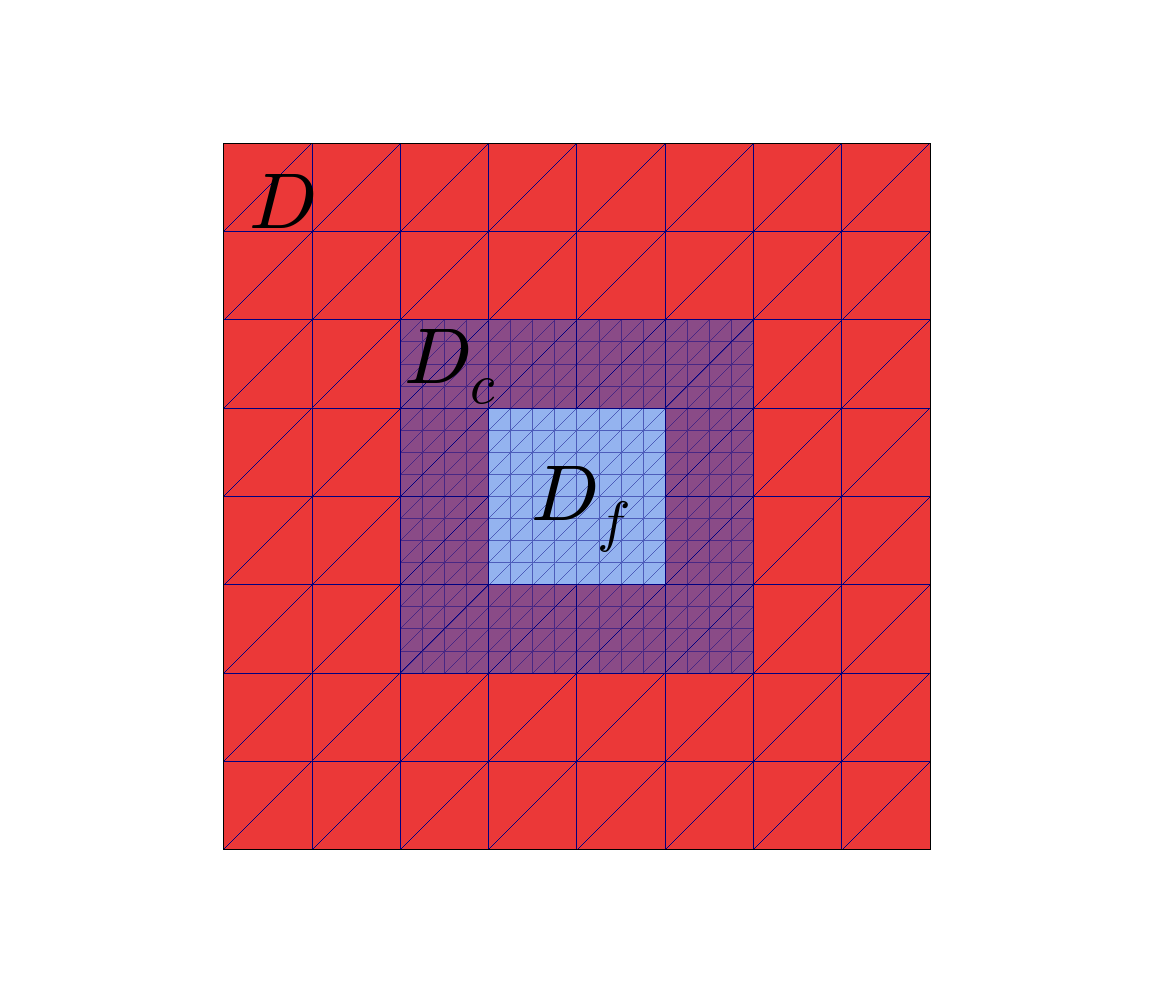}
  \vspace{-5pt}
  \caption{A coarse (resp. fine) mesh is used in $D \cup D_c$ (resp. $D_c \cup D_f$). \label{fig:decompo_right}}
  \vspace{10pt}
\end{figure}

The minimization problem~\eqref{eq:pb_min} is then approximated by
\begin{equation} \label{eq:pb_min_H}
  \inf \left\{ \begin{array}{c} {\cal E}(\overline{u}^H,\widecheck{u}^h_\eps), \quad \overline{u}^H \in V_H, \quad \overline{u}^H(x) = x_1 \ \text{on $\Gamma$}, \\ \noalign{\vskip 2pt} \widecheck{u}^h_\eps\in V_h, \qquad {\cal C}(\overline{u}^H-\widecheck{u}^h_\eps,\phi^H) = 0 \ \ \text{for any $\phi^H \in W_H$} \end{array} \right\},
\end{equation}
where the energy ${\cal E}$ and the constraint function ${\cal C}$ are defined by~\eqref{eq:def_E} and~\eqref{eq:def_C}. Similarly to~\eqref{eq:pb_min}, problem~\eqref{eq:pb_min_H} has a unique minimizer.

In sharp contrast to our observation on~\eqref{eq:pb_min} above, we now observe that the constraint 
\begin{equation}\label{eq:constraint}
  {\cal C}(\overline{u}^H-\widecheck{u}^h_\eps,\phi^H) = 0 \quad \text{for any $\phi^H \in W_H$}
\end{equation}
encodes that, on $D_c$, $\overline{u}^H$ is the projection (in the sense of the scalar product of $H^1(D_c)$, in view of the expression of ${\cal C}$) of $\widecheck{u}^h_\eps$ (itself a piecewise affine function on the fine mesh ${\cal T}_h$) on piecewise affine functions on the coarse mesh ${\cal T}_H$. In that sense, on $D_c$, $\overline{u}^H$ and $\widecheck{u}^h_\eps$ agree with one another {\em on average}. 

\begin{remark} \label{rem:harmful}
  Of course, if the constraint in~\eqref{eq:pb_min_H} is enforced for any $\phi^h \in W_h$, namely any piecewise affine function on the {\em fine} mesh ${\cal T}_h$, then the constraint implies $\overline{u}^H = \widecheck{u}^h_\eps$ in $D_c$. We recover a strong (and harmful) constraint as if we were to consider the continuous Arlequin problem~\eqref{eq:pb_min}. 
\end{remark}

Solving the minimization problem~\eqref{eq:pb_min_H} is equivalent to solving the following variational formulation: find $\overline{u}^H \in V_H^{\rm Dir BC}$, $\widecheck{u}^h_\eps \in V_h$ and $\psi^H \in W_H$ such that
\begin{equation} \label{eq:arlequin0}
  \begin{cases}
    \forall \overline{v}^H \in V_H^0, \quad & \overline{A}_{\overline{k}}(\overline{u}^H,\overline{v}^H) + {\cal C}(\overline{v}^H,\psi^H) = 0,
    \\ \noalign{\vskip 3pt}
    \forall \widecheck{v}^h \in V_h, \quad & \widecheck{A}_{k_\eps}(\widecheck{u}^h_\eps,\widecheck{v}^h) - {\cal C}(\widecheck{v}^h,\psi^H) = 0,
    \\ \noalign{\vskip 3pt}
    \forall \phi^H \in W_H, \quad & {\cal C}(\overline{u}^H-\widecheck{u}^h_\eps,\phi^H) = 0,
  \end{cases}
\end{equation}
where the bilinear forms $\overline{A}_{\overline{k}}$ and $\widecheck{A}_{k_\eps}$ are respectively defined by~\eqref{eq:def_Abar} and~\eqref{eq:def_Aeps} and where
\begin{gather*}
  V_H^{\rm Dir BC} = \left\{ v \in V_H, \quad \text{$v(x)=x_1$ on $\Gamma$} \right\},
  \\
  V_H^0 = \left\{ v \in V_H, \quad \text{$v=0$ on $\Gamma$} \right\}.
\end{gather*}

In general, the solution $(\overline{u}^H,\widecheck{u}^h_\eps,\psi^H)$ to~\eqref{eq:arlequin0} is not analytically known. In the limit $h \to 0$, $\eps \to 0$ and $H \to 0$, however, and if we temporarily assume that $k^\star$ is a scalar and $\overline{k} = k^\star$, then the Lagrange multiplier may be explicitly determined (see~\cite[Section~3.1]{ref_olga_comp}). It is given by $\psi = k^\star \, \psi_0$ where $\psi_0$ is the solution to
\begin{equation} \label{eq:psi_0}
\begin{cases}
  - \Delta \psi_0 + \psi_0 = 0 \quad \text{in $D_c$},
  \\ \noalign{\vskip 3pt}
  \dps \nabla \psi_0 \cdot n_{\Gamma_c} = \frac{1}{2} \, e_1 \cdot n_{\Gamma_c} \ \ \text{on $\Gamma_c$}, \qquad \nabla \psi_0 \cdot n_{\Gamma_f} = -\frac{1}{2} \, e_1 \cdot n_{\Gamma_f} \ \ \text{on $\Gamma_f$}.
\end{cases}
\end{equation}
Following~\cite{ref_olga_comp} and our detailed comments therein regarding consistency of the computational approach, we enrich the classical finite element space $W_H$ and consider
\begin{equation} \label{eq:def_W_H_enrich}
W_H^{\rm enrich} = W_H + \text{Span $\psi_0$}
\end{equation}
instead of $W_H$. In practice, $\psi_0$ is of course not analytically known for general domains and we therefore use an accurate finite element approximation of $\psi_0$. More precisely, the solution $\psi_0 \in H^1(D_c)$ to~\eqref{eq:psi_0} satisfies the following variational formulation:
\begin{equation} \label{eq:psi_0_FV}
\forall \phi \in H^1(D_c), \qquad {\cal C}(\psi_0,\phi) = \frac{1}{2} \int_{\Gamma_c} (e_1 \cdot n_{\Gamma_c}) \, \phi - \frac{1}{2} \int_{\Gamma_f} (e_1 \cdot n_{\Gamma_f}) \, \phi.
\end{equation}
We introduce the finite element space
$$
W_h = \left\{ \phi^h \in H^1(D_c), \quad \text{$\phi^h$ is piecewise affine on the fine mesh ${\cal T}_h$} \right\},
$$
and define $\psi_0^h \in W_h$ as the solution to the following variational formulation:
\begin{equation} \label{eq:psi_0_FV_h}
\forall \phi^h \in W_h, \qquad {\cal C}(\psi_0^h,\phi^h) = \frac{1}{2} \int_{\Gamma_c} (e_1 \cdot n_{\Gamma_c}) \, \phi^h - \frac{1}{2} \int_{\Gamma_f} (e_1 \cdot n_{\Gamma_f}) \, \phi^h.
\end{equation}
For technical reasons that will be apparent below (see~\eqref{eq:tilde_psi}), it is convenient to manipulate an approximation of $\psi_0$ which is a piecewise affine function on the fine mesh ${\cal T}_h$ used to discretize $\widecheck{u}_\eps$. This motivates the choice of approximating $\psi_0$ in $W_h$, and not in another finite dimensional space. Of course, standard finite element arguments show that $\psi_0^h$ converges in $H^1(D_c)$ to $\psi_0$ when $h \to 0$. Note also that the computation of $\psi_0^h$ has only to be performed once, independently of the number of iterations to solve the minimization problem described in Section~\ref{sec:opt_heuristics} below. The additional cost can thus be neglected. In what follows, we therefore consider the enriched space
$$
W_{H,h}^{\rm enrich} = W_H + \text{Span $\psi_0^h$}
$$
instead of $W_H^{\rm enrich}$.

\medskip

As shown in~\cite[Section~3.1]{ref_olga_comp}, the enrichment of the Lagrange multiplier space turns out to be very beneficial from the computational point of view. From the theoretical point of view, and as mentioned above, the enriched approach is now consistent, in the sense that we are enlarging the discretization space so that the exact solution (at convergence $\eps \to 0$) of the problem belongs to that space (see also Remark~\ref{rem:consistency} below). Moreover, we underline that our analysis below (see in particular Lemma~\ref{lemma:optimization} and Section~\ref{sec:homogenization}) critically uses the fact that we work with the enriched space $W_{H,h}^{\rm enrich}$ (or $W_H^{\rm enrich}$ if we set $h=0$). Our current arguments do not go through if we were to work with $W_H$.

\medskip

Thus, instead of discretizating~\eqref{eq:pb_min} in the form of~\eqref{eq:pb_min_H}, we consider hereafter the following enriched minimization problem:
\begin{equation} \label{eq:pb_min_H_enriched}
  \inf \left\{ \begin{array}{c} {\cal E}(\overline{u}^H,\widecheck{u}^h_\eps), \quad \overline{u}^H \in V_H, \quad \overline{u}^H(x) = x_1 \ \text{on $\Gamma$}, \\ \noalign{\vskip 2pt} \widecheck{u}^h_\eps \in V_h, \qquad {\cal C}(\overline{u}^H-\widecheck{u}^h_\eps,\phi^H) = 0 \ \ \text{for any $\phi^H \in W_{H,h}^{\rm enrich}$} \end{array} \right\},
\end{equation}
where the energy ${\cal E}$ and the constraint function ${\cal C}$ are defined by~\eqref{eq:def_E} and~\eqref{eq:def_C}. The corresponding variational formulation reads as: find $\overline{u}^H \in V_H^{\rm Dir BC}$, $\widecheck{u}^h_\eps \in V_h$ and $\psi^H \in W_{H,h}^{\rm enrich}$ such that
\begin{equation} \label{eq:arlequin}
  \begin{cases}
  \forall \overline{v}^H \in V_H^0, \quad &\overline{A}_{\overline{k}}(\overline{u}^H,\overline{v}^H) + {\cal C}(\overline{v}^H,\psi^H) = 0,
  \\ \noalign{\vskip 3pt}
  \forall \widecheck{v}^h \in V_h, \quad & \widecheck{A}_{k_\eps}(\widecheck{u}^h_\eps,\widecheck{v}^h) - {\cal C}(\widecheck{v}^h,\psi^H) = 0,
  \\ \noalign{\vskip 3pt}
  \forall \phi^H \in W_{H,h}^{\rm enrich}, \quad & {\cal C}(\overline{u}^H-\widecheck{u}^h_\eps,\phi^H) = 0.
  \end{cases}
\end{equation}
We keep denoting the Lagrange multiplier by $\psi^H$ (and not $\psi^{H,h}$) since it is meant to be a coarse-scale function. Similarly to~\eqref{eq:arlequin0}, system~\eqref{eq:arlequin} has a unique solution for any positive definite symmetric matrix $\overline{k}$.

\section{Case of scalar-valued coefficients $k^\star$ and $\overline{k}$}\label{sec:scalar}

In this section, we address the case where the homogenized coefficient $k^\star$ and the tentative coefficient $\overline{k}$ upon which we optimize are scalar quantities. The periodic checkerboard~\cite{le2012}, for instance, falls within this case. The study of the general case (when $k^\star$ is matrix-valued) is postponed until Appendix~\ref{seq:matrix}. We recall that, for the well-posedness of the mathematical problem~\eqref{eq:diffusion} (and similarly of~\eqref{eq:pb_min_H_enriched}), the sequence of oscillatory (matrix-valued) functions $k_\eps$ is assumed to be bounded and bounded away from zero uniformly in $\eps$ (see~\eqref{eq:boundedness+coercivity}).

\medskip

This section is organized as follows. We first recall in Section~\ref{sec:opt_heuristics} the optimization strategy (see~\eqref{eq:optim_J} below) introduced in~\cite{cottereau} and which aims at computing an approximation of the homogenized coefficient $k^\star$ associated to the highly oscillatory coefficient $k_\eps$. Second, in Section~\ref{sec:tech_lemmas}, we state some important technical properties of the coupling system~\eqref{eq:pb_min_H_enriched} that are required for our analysis. Third, we study the optimization problem~\eqref{eq:optim_J}:
\begin{itemize}
\item we first establish the existence of an optimal coefficient $\overline{k}^{\rm opt}_\eps$ for a fixed value of $\eps>0$ (see our first result, Theorem~\ref{th:optimization} in Section~\ref{sec:opt}).
\item we then consider the limit $\eps \to 0$ and show that any optimal coefficient $\overline{k}^{\rm opt}_\eps$ indeed converges to the homogenized coefficient $k^\star$ (see our second result, Theorem~\ref{th:homogenization} in Section~\ref{sec:homogenization}).
\item we eventually show that the minimizer to~\eqref{eq:optim_J} is unique when $\eps$ is sufficiently small (see our third result, Theorem~\ref{th:uniqueness} in Section~\ref{sec:uniqueness}).
\end{itemize}
These results prove, at least in this scalar setting, that the approach is certified theoretically.

\subsection{Optimization upon the coefficient $\overline{k}$} \label{sec:opt_heuristics}

We first recall the optimization strategy introduced in~\cite{cottereau}.
Formally, the heterogeneous coefficient $k_\eps$ in~\eqref{eq:pb_min_H_enriched} can be replaced by its homogenized limit $k^\star$, which is a constant coefficient in view of the periodicity assumption~\eqref{eq:structure-k}. It is then clear that, if $\overline{k} = k^\star$, then the response of the material is linear (i.e. $\overline{u}^H(x) = x_1$ in $D \cup D_c$), because the whole domain is modelled by a constant coefficient and because of the particular boundary conditions considered in~\eqref{eq:pb_min_H_enriched}. As shown in~\cite[Lemma~2.1]{ref_olga_comp} (in the absence of any spatial discretization) and in Lemma~\ref{lemma:optimization} below (when taking into account mesh discretization), the converse is also true: if the response of the material is linear (i.e. if $\overline{u}^H(x) = x_1$ in $D \cup D_c$), then the material is homogeneous and $\overline{k} = k^\star$. This motivates the idea to compare the solution $\overline{u}^H$ to~\eqref{eq:pb_min_H_enriched} with the reference solution $u_{\rm ref}$ defined by $u_{\rm ref}(x) = x_1$. Optimizing upon the coefficient $\overline{k}$ in order to best fit a linear field $u_{\rm ref}(x) = x_1$ is thus a way to enforce that $\overline{k} = k^\star$. 

We may achieve that by considering the minimization problem
\begin{equation} \label{eq:optim_J}
  I_{\eps, H, h} = \inf \left\{ J_{\eps, H, h}(\overline{k}), \quad \overline{k} \in (0,\infty) \right\},
\end{equation}
with
\begin{equation} \label{eq:def_J}
  J_{\eps, H, h}(\overline{k}) = \int_{D \cup D_c} \big| \nabla \overline{u}^H_{\overline{k}, k_\eps} - \nabla u_{\rm ref} \big|^2 = \int_{D \cup D_c} \big| \nabla \overline{u}^H_{\overline{k}, k_\eps} - e_1 \big|^2.
\end{equation}
Here $e_1$ is the first canonical vector and $(\overline{u}^H_{\overline{k}, k_\eps},\widecheck{u}^h_{\eps,\overline{k}, k_\eps}, \psi^H_{\overline{k}, k_\eps})$ is the solution to~\eqref{eq:arlequin} where we have made explicit the dependency of the solution with respect to the tentative coefficient $\overline{k}$ and the oscillating coefficient $k_\eps$. Of course, $\overline{u}^H_{\overline{k}, k_\eps}$ depends on $\eps$ and on the mesh sizes $H$ and $h$ used in~\eqref{eq:arlequin}, thus the subscripts in the notation $J_{\eps, H, h}$.

\begin{remark}\label{rmk:reference}
  The reference solution $u_{\rm ref}$ that we consider in~\eqref{eq:def_J} depends on the boundary conditions imposed in the Arlequin problem. For instance, another possibility is to consider the following minimization problem, analogous to~\eqref{eq:pb_min_H_enriched}:
  \begin{equation} \label{eq:pb_min_H_x2}
    \inf \left\{ \begin{array}{c} {\cal E}(\overline{u}^H,\widecheck{u}^h_\eps), \quad \overline{u}^H \in V_H, \quad \overline{u}^H(x) = x_2 \ \text{on $\Gamma$}, \\ \noalign{\vskip 2pt} \widecheck{u}^h_\eps \in V_h, \qquad {\cal C}(\overline{u}^H-\widecheck{u}^h_\eps,\phi^H) = 0 \ \ \text{for any $\phi^H \in W_{H,h}^{\rm enrich}$} \end{array} \right\},
  \end{equation}
  where we recall that $\Gamma$ is the external boundary of $D$, see Figure~\ref{fig:decompo_left}. Of course, in this case, we should compare the solution $\overline{u}^H$ to~\eqref{eq:pb_min_H_x2} with the reference solution $u_{\rm ref}(x)=x_2$ and thus modify the objective function in~\eqref{eq:optim_J} as
  $$
  J_{\eps, H, h}(\overline{k}) = \int_{D \cup D_c} \big| \nabla \overline{u}^H_{\overline{k}, k_\eps} - e_2 \big|^2.
  $$
  We also note that, for the case of a matrix-valued coefficient $k^\star \in \RR^{2\times2}$, in order to recover all the components of $k^\star$, we {\em have} to consider both problems~\eqref{eq:pb_min_H_enriched} and~\eqref{eq:pb_min_H_x2} (see Remark~\ref{matrix_dirichlet_x_2} below).
\end{remark}

Note that, in~\eqref{eq:optim_J}, we impose neither that $\overline{k}$ is bounded from above nor that $\overline{k}$ is bounded away from zero. Our proof of existence of a minimizer actually shows that we do not need to impose such bounds, provided two conditions (namely~\eqref{eq:J_contradiction} and~\eqref{eq:J_contradiction2} below) are satisfied. These conditions can be checked using minimizing sequences of~\eqref{eq:optim_J}, and we show below that they in particular hold whenever $h$ and $\eps$ are sufficiently small.

It is also possible to enforce on $\overline{k}$ in~\eqref{eq:optim_J} the bounds~\eqref{eq:boundedness+coercivity} satisfied by $k_\eps$ and proceed with the mathematical study (see Remark~\ref{rem:k_bounds} below). A corresponding algorithm can be designed. We do not proceed in this direction.

\subsection{Two useful technical results} \label{sec:tech_lemmas}

We collect here two technical results that are useful for our analysis below.

\subsubsection{Auxiliary homogenization result} \label{sec:tech_lemma_homog}

We temporarily assume that $h = 0$ and consider the following system for some fixed $\overline{k} \in (0,\infty)$: find $\overline{u}^H_{\overline{k}, k_\eps} \in V_H^{\rm Dir BC}$, $\widecheck{u}_{\eps,\overline{k}, k_\eps} \in H^1(D_c \cup D_f)$ and $\psi^H_{\overline{k}, k_\eps} \in W_H^{\rm enrich}$ such that
\begin{equation}\label{eq:system_k_star}
  \begin{cases}
    \forall \overline{v}^H \in V_H^0, \quad & \overline{A}_{\overline{k}}(\overline{u}^H_{\overline{k}, k_\eps},\overline{v}^H) + {\cal C}(\overline{v}^H,\psi^H_{\overline{k}, k_\eps}) = 0,
  \\ \noalign{\vskip 3pt}
  \forall \widecheck{v} \in H^1(D_c \cup D_f), \quad & \widecheck{A}_{k_\eps}(\widecheck{u}_{\eps,\overline{k}, k_\eps},\widecheck{v}) - {\cal C}(\widecheck{v},\psi^H_{\overline{k}, k_\eps}) = 0,
  \\ \noalign{\vskip 3pt}
  \forall \phi^H \in W_H^{\rm enrich}, \quad & {\cal C}(\overline{u}^H_{\overline{k}, k_\eps} - \widecheck{u}_{\eps,\overline{k}, k_\eps},\phi^H) = 0,
  \end{cases}
\end{equation}
where we recall that $W_H^{\rm enrich}$ is defined by~\eqref{eq:def_W_H_enrich}. The system~\eqref{eq:system_k_star} corresponds to~\eqref{eq:arlequin} where we have omitted the fine mesh discretization (and thus formally set $h=0$). On purpose, we have made explicit the dependency of the solution to~\eqref{eq:system_k_star} with respect to the constant coefficient $\overline{k}$ and the oscillatory coefficient $k_\eps$. Our aim is to pass to the limit $\eps \to 0$ in~\eqref{eq:system_k_star}. To that aim, we introduce the bilinear form $\widecheck{A}_{k^\star}$ (compare with~\eqref{eq:def_Aeps}) defined by
$$
\widecheck{A}_{k^\star}(\widecheck{u},\widecheck{v}) = \frac{1}{2} \int_{D_c} k^\star \, \nabla \widecheck{u}(x) \cdot \nabla \widecheck{v}(x) + \int_{D_f} k^\star \, \nabla \widecheck{u}(x) \cdot \nabla \widecheck{v}(x).
$$

\begin{lemma}\label{th:multiplication}
  Let $\overline{k} \in (0,\infty)$ and $k_\eps$ be given by~\eqref{eq:structure-k} for some fixed periodic coefficient $k_{\rm per}$ that satisfies the classical boundedness and coercivity conditions~\eqref{eq:boundedness+coercivity}. We make the regularity assumption~\eqref{eq:holder_continuity} and the geometric assumption~\eqref{eq:boundaries}.

  Consider the solution $(\overline{u}^H_{\overline{k}, k_\eps},\widecheck{u}_{\eps,\overline{k}, k_\eps},\psi^H_{\overline{k}, k_\eps})$ to~\eqref{eq:system_k_star} and assume that there exists $\overline{u}^H_0 \in V_H^{\rm Dir BC}$, $\widecheck{u}_0 \in H^1(D_c \cup D_f)$ and $\psi^H_0 \in W_H^{\rm enrich}$ such that $\overline{u}^H_{\overline{k}, k_\eps} \to \overline{u}^H_0$ in $H^1(D \cup D_c)$, $\widecheck{u}_{\eps,\overline{k}, k_\eps} \rightharpoonup \widecheck{u}_0$ in $H^1(D_c \cup D_f)$ and $\psi^H_{\overline{k}, k_\eps} \to \psi^H_0$ in $H^1(D_c)$ when $\eps \to 0$.

  Then $(\overline{u}^H_0, \widecheck{u}_0, \psi^H_0)$ is actually equal to $(\overline{u}^H_{\overline{k},k^\star}, \widecheck{u}_{\overline{k},k^\star}, \psi^H_{\overline{k},k^\star})$, that is the solution to the following system:
\begin{equation}\label{eq:system_contradiction}
  \begin{cases}
    \forall \overline{v}^H \in V_H^0, \quad & \overline{A}_{\overline{k}}(\overline{u}^H_{\overline{k},k^\star},\overline{v}^H) + {\cal C}(\overline{v}^H,\psi^H_{\overline{k},k^\star}) = 0,
    \\ \noalign{\vskip 3pt}
    \forall \widecheck{v} \in H^1(D_c \cup D_f), \quad & \widecheck{A}_{k^\star}(\widecheck{u}_{\overline{k},k^\star},\widecheck{v}) - {\cal C}(\widecheck{v},\psi^H_{\overline{k},k^\star}) = 0,
    \\ \noalign{\vskip 3pt}
    \forall \phi^H \in W_H^{\rm enrich}, \quad & {\cal C}(\overline{u}^H_{\overline{k},k^\star}-\widecheck{u}_{\overline{k},k^\star},\phi^H) = 0.
  \end{cases}
\end{equation}
\end{lemma}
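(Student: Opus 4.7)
The overall strategy is to pass to the limit $\eps \to 0$ in each of the three equations of system~\eqref{eq:system_k_star} and then to identify the limiting triple $(\overline{u}^H_0, \widecheck{u}_0, \psi^H_0)$ as the unique solution of~\eqref{eq:system_contradiction} (well-posedness of the saddle-point system~\eqref{eq:system_contradiction} is inherited from that of~\eqref{eq:arlequin} already mentioned in Section~\ref{sec:disc}, since $k^\star$ is positive definite). The first equation of~\eqref{eq:system_k_star} only involves the constant matrix $\overline{k}$, together with $\overline{u}^H_{\overline{k},k_\eps}$ and $\psi^H_{\overline{k},k_\eps}$ which both converge strongly in $H^1$; passing to the limit is therefore immediate for every fixed $\overline{v}^H \in V_H^0$. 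The third equation is a constraint expressed via the continuous bilinear form ${\cal C}$ on $H^1(D_c)$; since $\phi^H \in W_H^{\rm enrich}$ is fixed (this space being finite-dimensional and independent of $\eps$), strong convergence of $\overline{u}^H_{\overline{k},k_\eps}$ combined with weak convergence of $\widecheck{u}_{\eps,\overline{k},k_\eps}$ is enough to conclude ${\cal C}(\overline{u}^H_0 - \widecheck{u}_0, \phi^H) = 0$.

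The substantive step is the second equation. Its right-hand side ${\cal C}(\widecheck{v}, \psi^H_{\overline{k},k_\eps})$ converges to ${\cal C}(\widecheck{v}, \psi^H_0)$ by strong convergence of the Lagrange multiplier. For the left-hand side $\widecheck{A}_{k_\eps}(\widecheck{u}_{\eps,\overline{k},k_\eps}, \widecheck{v})$, I will read the second equation of~\eqref{eq:system_k_star} locally: testing against $\widecheck{v}$ compactly supported in $D_f$ gives $-\dive(k_\eps \nabla \widecheck{u}_{\eps,\overline{k},k_\eps}) = 0$ in $D_f$, while testing against $\widecheck{v}$ compactly supported in $D_c$ gives $-\tfrac{1}{2}\dive(k_\eps \nabla \widecheck{u}_{\eps,\overline{k},k_\eps}) = -\Delta \psi^H_{\overline{k},k_\eps} + \psi^H_{\overline{k},k_\eps}$ in $D_c$, whose right-hand side converges strongly in $H^{-1}(D_c)$. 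Combined with the weak $H^1$-convergence $\widecheck{u}_{\eps,\overline{k},k_\eps} \rightharpoonup \widecheck{u}_0$, the classical periodic $H$-convergence theorem then yields $k_\eps \nabla \widecheck{u}_{\eps,\overline{k},k_\eps} \rightharpoonup k^\star \nabla \widecheck{u}_0$ weakly in $L^2$ on each of the two open subdomains. I will obtain this identification by Tartar's oscillating test function method: for each $P \in \RR^2$ I build $v_\eps^P(x) = P \cdot x + \eps w_P(x/\eps)$ using the periodic correctors $w_P$ associated with $k_{\rm per}$, use $v_\eps^P$ (localized by a smooth cut-off) as a test field, and exploit the symmetry of $k_\eps$ together with the fact that $k_\eps \nabla v_\eps^P \rightharpoonup k^\star P$ weakly in $L^2$ and $\dive(k_\eps \nabla v_\eps^P) = 0$. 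Consequently, for any fixed $\widecheck{v} \in H^1(D_c \cup D_f)$,
$$ \widecheck{A}_{k_\eps}(\widecheck{u}_{\eps,\overline{k},k_\eps}, \widecheck{v}) = \int_{D_f} k_\eps \nabla \widecheck{u}_{\eps,\overline{k},k_\eps} \cdot \nabla \widecheck{v} + \frac{1}{2}\int_{D_c} k_\eps \nabla \widecheck{u}_{\eps,\overline{k},k_\eps} \cdot \nabla \widecheck{v} \longrightarrow \widecheck{A}_{k^\star}(\widecheck{u}_0, \widecheck{v}), $$
which gives the second equation of~\eqref{eq:system_contradiction} in the limit.

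The main obstacle is this identification of the weak-$L^2$ flux limit on both $D_f$ and $D_c$ despite the different prefactors $1$ and $1/2$ in front of $k_\eps$ and the nontrivial interface data on $\Gamma_f$ transmitted through the Lagrange multiplier. The H\"older continuity assumption~\eqref{eq:holder_continuity} on $k_{\rm per}$ is used here to guarantee that the correctors $w_P$ enjoy enough regularity for the boundary-layer errors near $\Gamma_f$ and $\Gamma_c$ (arising from cut-offs in the oscillating test functions, and from the fact that $\widecheck{v}$ does not vanish on these interfaces) to be controlled and to vanish as $\eps \to 0$. Once the three limit identities are established, $(\overline{u}^H_0, \widecheck{u}_0, \psi^H_0)$ satisfies~\eqref{eq:system_contradiction}, and uniqueness of the solution to~\eqref{eq:system_contradiction} yields $(\overline{u}^H_0, \widecheck{u}_0, \psi^H_0) = (\overline{u}^H_{\overline{k}, k^\star}, \widecheck{u}_{\overline{k}, k^\star}, \psi^H_{\overline{k}, k^\star})$, which concludes the proof.
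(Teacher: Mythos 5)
Your proof is correct, and it takes a genuinely different route from the paper's. Both arguments rest on Tartar's oscillating test functions, but the paper tests the variational form directly against a global oscillating function $\varphi + \eps\sum_i\partial_i\varphi\,w_i(\cdot/\eps)$ with $\varphi$ not vanishing on $\Gamma_c$ or $\Gamma_f$; integration by parts then produces boundary integrals on those interfaces, and the bulk of the paper's proof consists in showing these boundary terms tend to zero, using the periodic stream functions $\alpha_i$ of~\eqref{eq:alpha}, their $C^1$ regularity (which is where the H\"older assumption~\eqref{eq:holder_continuity} enters, via Schauder theory for the correctors), and a fractional-Sobolev interpolation between~\eqref{eq:interp_1} and~\eqref{eq:interp_1.2}. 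You instead identify the weak $L^2$ limit of the flux $k_\eps\nabla\widecheck u_{\eps,\overline k,k_\eps}$ locally, by applying the div--curl argument in the interiors of $D_c$ and $D_f$ (where the local right-hand sides converge strongly in $H^{-1}$ thanks to the strong $H^1$ convergence of $\psi^H_{\overline k,k_\eps}$), and then pass to the limit in $\widecheck A_{k_\eps}(\widecheck u_{\eps,\overline k,k_\eps},\widecheck v)$ by pairing the weakly converging flux against the fixed $\nabla\widecheck v$. No boundary integral arises in this route, which is shorter and more in the spirit of classical $H$-convergence. One small step you should spell out is that the local ($L^2_{\rm loc}$) identification of the flux upgrades to weak convergence in $L^2(D_c)$ and $L^2(D_f)$: this is because $k_\eps\nabla\widecheck u_{\eps,\overline k,k_\eps}$ is bounded in $L^2$, so every weak $L^2$ subsequential limit must agree with $k^\star\nabla\widecheck u_0$ a.e.\ on each subdomain.

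One point of confusion in your write-up: the concluding remark invoking~\eqref{eq:holder_continuity} to control ``boundary-layer errors near $\Gamma_f$ and $\Gamma_c$'' is inconsistent with the argument you actually describe. With compactly supported cut-offs, the div--curl lemma produces no boundary contributions and requires only $\nabla w_P\in L^2_{\rm per}$, which is automatic from~\eqref{eq:boundedness+coercivity}. In other words, your route does not use~\eqref{eq:holder_continuity} at all; that assumption is genuinely needed only in the paper's treatment of the boundary integrals via~\eqref{eq:regul_alpha}. Likewise, the ``different prefactors $1$ and $1/2$'' and the ``interface data transmitted through the Lagrange multiplier'' are harmless for your argument: neither affects the local identification of the weak $L^2$ flux limit, and the prefactors simply pass through the final pairing.
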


Note that we have assumed that the Lagrange multiplier $\psi^H_{\overline{k}, k_\eps}$ {\em strongly} converges in $H^1(D_c)$. This is a critical assumption. We indeed face a difficulty if we only assume that it weakly converges in $H^1(D_c)$ (because we use a test function $\widecheck{v}$ which itself depends on $\eps$, see~\eqref{eq:test_function} below, and only weakly converges in $H^1(D_c)$; passing to the limit in the last two terms of~\eqref{eq:arlequin_2nd_line} would then be difficult under the only assumption of weak convergence of $\psi^H_{\overline{k}, k_\eps}$).

\begin{coro} \label{coro:multiplication}
We make the same assumptions as in Lemma~\ref{th:multiplication}, except that we now consider a sequence $\overline{k}_\eps$ of constant coefficients (instead of a constant coefficient $\overline{k}$ independent of $\eps$), with $\overline{k}_\eps \in (0,\infty)$ for any $\eps$ and such that $\overline{k}_\eps$ converges to some $\overline{k}_0 \in [0,\infty)$ when $\eps \to 0$. Then the conclusion of Lemma~\ref{th:multiplication} still holds, with $\overline{k}$ replaced by $\overline{k}_0$ in~\eqref{eq:system_contradiction}. In the case $\overline{k}_0 = 0$, $(\overline{u}^H_{\overline{k}_0,k^\star}, \widecheck{u}_{\overline{k}_0,k^\star}, \psi^H_{\overline{k}_0,k^\star})$ is simply {\em a solution} (rather than {\em the solution}) to~\eqref{eq:system_contradiction}.
\end{coro}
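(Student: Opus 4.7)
My plan is to transcribe the proof of Lemma~\ref{th:multiplication} with $\overline{k}$ replaced by $\overline{k}_\eps$ throughout, and verify that the convergence arguments still go through. The three equations in the Arlequin system~\eqref{eq:system_k_star} depend on $\overline{k}_\eps$ only through the first one (the bilinear form $\overline{A}_{\overline{k}_\eps}$ in~\eqref{eq:def_Abar}), so this is essentially the only place where the argument has to be adapted.

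First, I would pass to the limit in the first equation
$$
\overline{A}_{\overline{k}_\eps}(\overline{u}^H_{\overline{k}_\eps, k_\eps},\overline{v}^H) + {\cal C}(\overline{v}^H,\psi^H_{\overline{k}_\eps, k_\eps}) = 0, \quad \overline{v}^H \in V_H^0.
$$
By definition, $\overline{A}_{\overline{k}_\eps}(\overline{u},\overline{v}) = \overline{k}_\eps \left( \int_D \nabla \overline{u} \cdot \nabla \overline{v} + \tfrac{1}{2} \int_{D_c} \nabla \overline{u} \cdot \nabla \overline{v} \right)$ depends linearly on the scalar $\overline{k}_\eps$. Since $\overline{k}_\eps \to \overline{k}_0$ and $\overline{u}^H_{\overline{k}_\eps,k_\eps} \to \overline{u}^H_0$ strongly in $H^1(D\cup D_c)$, the product converges: $\overline{A}_{\overline{k}_\eps}(\overline{u}^H_{\overline{k}_\eps,k_\eps},\overline{v}^H) \to \overline{A}_{\overline{k}_0}(\overline{u}^H_0,\overline{v}^H)$. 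The constraint term passes to the limit by strong convergence of $\psi^H_{\overline{k}_\eps,k_\eps}$ in $H^1(D_c)$. Similarly the third equation (the constraint) does not involve $\overline{k}_\eps$ at all and passes to the limit by the assumed strong/weak convergences.

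For the second equation, which carries the oscillatory coefficient $k_\eps$, the coefficient $\overline{k}$ does not appear, so the argument of Lemma~\ref{th:multiplication} (involving the corrector-based test function $\widecheck{v}$ mentioned below~\eqref{eq:arlequin_2nd_line} and the strong convergence of $\psi^H_{\overline{k}_\eps,k_\eps}$) carries over verbatim to yield $\widecheck{A}_{k^\star}(\widecheck{u}_0,\widecheck{v}) - {\cal C}(\widecheck{v},\psi^H_0) = 0$ for every $\widecheck{v} \in H^1(D_c \cup D_f)$.

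Combining the three limits, the triple $(\overline{u}^H_0, \widecheck{u}_0, \psi^H_0)$ satisfies~\eqref{eq:system_contradiction} with $\overline{k}$ replaced by $\overline{k}_0$. The only subtlety, which I expect to be the delicate point to phrase correctly, concerns the case $\overline{k}_0 = 0$: the first equation degenerates into ${\cal C}(\overline{v}^H,\psi^H_0)=0$ for all $\overline{v}^H \in V_H^0$, which constrains $\psi^H_0$ but no longer determines $\overline{u}^H_0$ from the other unknowns. Hence the limit problem is not uniquely solvable, and we only claim that $(\overline{u}^H_{\overline{k}_0,k^\star}, \widecheck{u}_{\overline{k}_0,k^\star}, \psi^H_{\overline{k}_0,k^\star})$ is \emph{a} solution. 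When $\overline{k}_0 > 0$, the limit system is the same well-posed system as in Lemma~\ref{th:multiplication}, so uniqueness yields identification of the full limit.
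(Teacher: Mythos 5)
Your argument is correct and follows exactly the route the paper intends (it states the corollary "can be performed using exactly the same arguments" as Lemma~\ref{th:multiplication} and skips the details). You correctly isolate the only place where $\overline{k}_\eps$ enters (the first equation, linearly through $\overline{A}_{\overline{k}_\eps} = \overline{k}_\eps \overline{A}_1$), pass to the limit there using $\overline{k}_\eps \to \overline{k}_0$ together with the assumed strong convergences, note that the second and third equations are untouched by the change, and correctly explain why the case $\overline{k}_0=0$ degrades uniqueness to mere existence of a solution.
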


The proof of Corollary~\ref{coro:multiplication} can be performed using exactly the same arguments as in the proof of Lemma~\ref{th:multiplication}. We therefore skip it.

\begin{proof}[Proof of Lemma~\ref{th:multiplication}]
The second line of~\eqref{eq:system_k_star} explicitly reads as
\begin{multline} \label{eq:arlequin_2nd_line}
  \forall \widecheck{v}\in H^1(D_c \cup D_f), \quad \frac{1}{2} \int_{D_c} k_\eps \nabla \widecheck{u}_{\eps,\overline{k}, k_\eps} \cdot \nabla \widecheck{v} + \int_{D_f}  k_\eps \nabla \widecheck{u}_{\eps,\overline{k}, k_\eps} \cdot \nabla \widecheck{v}
  \\
  - \int_{D_c} \nabla \psi^H_{\overline{k}, k_\eps} \cdot \nabla \widecheck{v} - \int_{D_c} \psi^H_{\overline{k}, k_\eps} \widecheck{v} = 0.    
\end{multline}
It is not straightforward to pass to the limit $\eps \to 0$ in the first two terms of~\eqref{eq:arlequin_2nd_line}, because both $k_\eps$ and $\nabla \widecheck{u}_{\eps,\overline{k}, k_\eps}$ only weakly converge. This is a classical homogenization issue (we refer e.g. to~\cite{BeLiP:78,CioDo:99,jikov2012homogenization,Tarta:10} and~\cite[Chapter~1]{Allai:02} for classical textbooks on homogenization). To address this difficulty, we are going to use the oscillating test function method. To simplify the notation, we temporarily denote $\widecheck{u}_{\eps,\overline{k}, k_\eps}$ by $\widecheck{u}_\eps$. We start by choosing an appropriate test function $\widecheck{v}$ in~\eqref{eq:arlequin_2nd_line}, namely 
\begin{equation}\label{eq:test_function}
  \widecheck{v}(x) = \varphi(x) + \eps \sum_{i=1}^2 \partial_i \varphi(x) \, w_i(x/\eps),
\end{equation}
where $\varphi \in C^\infty(\overline{D_c \cup D_f})$ is arbitrary and $w_i$ is the periodic corrector function associated to the $i$-th coordinate vector $e_i$, namely the solution (unique up to the addition of a constant) to  
\begin{equation}\label{eq:corrector}
  -{\rm div} \big( k_{\rm per} (e_i+\nabla w_i) \big) = 0 \quad \text{in $\RR^2$}, \qquad \text{$w_i$ is periodic}.
\end{equation}
With this particular test function~\eqref{eq:test_function} and using integrations by parts, the first term in~\eqref{eq:arlequin_2nd_line} writes
\begin{align}
  \nonumber
  \int_{D_c} k_\eps \nabla \widecheck{u}_\eps \cdot \nabla \widecheck{v}
  &=
  \int_{D_c} k_\eps \nabla \widecheck{u}_\eps \cdot \nabla \varphi + \sum_{i = 1}^2 \int_{D_c} \partial_i\varphi \, k_\eps \, \nabla \widecheck{u}_\eps \cdot \nabla w_i(\cdot/\eps)
  \\
  \nonumber 
  & \quad
  + \eps \sum_{i = 1}^2 \int_{D_c} w_i(\cdot/\eps) \, k_\eps \nabla \widecheck{u}_\eps \cdot \partial_i\nabla\varphi
  \\
  \nonumber
  &=
  -\sum_{i = 1}^2 \int_{D_c} \widecheck{u}_\eps \, \partial_i\varphi\,{\rm div}\left[k_\eps \left(e_i+\nabla w_i(\cdot/\eps)\right)\right]
  \\
  \nonumber 
  & \quad
  -\sum_{i = 1}^2 \int_{D_c} \widecheck{u}_\eps \, k_\eps \left(e_i+\nabla w_i(\cdot/\eps)\right)\cdot \partial_i\nabla\varphi
  \\
  \nonumber
  & \quad
  + \sum_{i = 1}^2 \int_{\Gamma_f} \widecheck{u}_\eps \, \partial_i\varphi \, \left[k_\eps \left(e_i+\nabla w_i(\cdot/\eps)\right)\right] \cdot n_{\Gamma_f}
  \\
  \nonumber 
  &\quad
  + \sum_{i = 1}^2 \int_{\Gamma_c} \widecheck{u}_\eps \, \partial_i\varphi \, \left[k_\eps \left(e_i+\nabla w_i(\cdot/\eps)\right)\right] \cdot n_{\Gamma_c}
  \\
  \label{eq:limit_eps} 
  & \quad
  + \eps \sum_{i = 1}^2 \int_{D_c} w_i(\cdot/\eps) \, k_\eps \nabla \widecheck{u}_\eps \cdot\partial_i\nabla\varphi.
\end{align}
We now successively pass to the limit in each term of the right-hand side of~\eqref{eq:limit_eps}. The first term vanishes because of~\eqref{eq:structure-k} and the corrector equation~\eqref{eq:corrector}. The limit of the second term is identified using the fact that the weak convergence of $\widecheck{u}_\eps$ in $H^1(D_c \cup D_f)$ implies, up to the extraction of a subsequence (that we do not make explicit in the notation), its strong convergence in $L^2(D_c \cup D_f)$. We thus obtain
\begin{equation}\label{eq:second_term}
  \lim_{\eps\to 0} \sum_{i = 1}^2 \int_{D_c} \widecheck{u}_\eps \, k_\eps \left(e_i+\nabla w_i(\cdot/\eps)\right) \cdot \partial_i\nabla\varphi = \sum_{i = 1}^2 \int_{D_c} \widecheck{u}_0 \, k^\star e_i \cdot \partial_i\nabla\varphi,
\end{equation}
where the homogenized coefficient $k^\star$ reads as
$$
k^\star \, e_i = \int_{(0,1)^2} k_{\rm per} \left(e_i+\nabla w_i\right), \qquad i = 1,2.
$$
The last term of the right-hand side of~\eqref{eq:limit_eps} converges to zero, using the Cauchy-Schwarz inequality and the fact that $\nabla \widecheck{u}_\eps$ is bounded in $L^2(D_c)$, $k_\eps$ is bounded in $L^\infty(D_c)$ in view of assumption~\eqref{eq:boundedness+coercivity} and $w_i \in L^2((0,1)^2)$ is periodic.

\medskip

There now remains to show that we can pass to the limit in the two boundary integrals of the right-hand side of~\eqref{eq:limit_eps}. For this purpose, we introduce the two vector fields
$$
G_i = k_{\rm per}\left(e_i+\nabla w_i\right) - k^\star e_i, \qquad i = 1,2,
$$
which are periodic, divergence-free and of zero mean. In the two-dimensional setting, there hence exist (see e.g.~\cite[p.~6]{jikov2012homogenization}) two periodic functions $\alpha_i$ of zero mean such that 
\begin{equation}\label{eq:alpha}
  G_i = \left( \begin{array}{c} \partial_{x_2} \alpha_i \\ -\partial_{x_1} \alpha_i \end{array} \right), \qquad i = 1,2.
\end{equation}
In view of the H\"{o}lder continuity~\eqref{eq:holder_continuity} we have assumed on the coefficient $k_{\rm per}$, we know from~\eqref{eq:corrector} and elliptic regularity theory that $\nabla w_i$ is H\"{o}lder continuous (see~\cite{gilbarg2015elliptic}), thus $G_i$ is also H\"{o}lder continuous and we have that $\alpha_i\in C^1(\RR^2)$, a property that we will use in~\eqref{eq:regul_alpha} below.

Under the assumption~\eqref{eq:boundaries} that the boundaries $\Gamma_f$ and $\Gamma_c$ are unions of straight edges, namely $\dps \Gamma_f = \mathop{\bigcup}_{1\leq j\leq N_f} \widetilde{e}_j$ and $\dps \Gamma_c = \mathop{\bigcup}_{N_f+1\leq j\leq N_f +N_c} \widetilde{e}_j$, we may consider, without loss of generality, the third and fourth terms of the right-hand side of~\eqref{eq:limit_eps} as a boundary integral on an edge $\widetilde{e} \in \{ \widetilde{e}_j \}_{1\leq j\leq N_f+N_c}$. We then write 
\begin{align}
  \nonumber
  \int_{\widetilde{e}} \widecheck{u}_\eps \, \partial_i\varphi \, \left[ k_\eps \left( e_i+\nabla w_i(\cdot/\eps) \right) \right] \cdot n_{\widetilde{e}}
  &=
  \int_{\widetilde{e}} (\widecheck{u}_\eps-\widecheck{u}_0) \, \partial_i\varphi \, \left[ k_\eps \left( e_i+\nabla w_i(\cdot/\eps) \right) \right] \cdot n_{\widetilde{e}}
  \\
  \label{eq:star_boundary}
  &+
  \int_{\widetilde{e}} \widecheck{u}_0 \, \partial_i\varphi \, \left[ k_\eps \left( e_i+\nabla w_i(\cdot/\eps) \right) \right] \cdot n_{\widetilde{e}},
\end{align}
where $n_{\widetilde{e}}$ is a unit vector orthogonal to the edge $\widetilde{e}$. We now use the fact that the trace operator is linear and continuous from $H^1(D_c \cup D_f)$ to $H^{1/2}(\widetilde{e})$ and that the injection $H^{1/2}(\widetilde{e})\subset L^2(\widetilde{e})$ is compact to obtain that $\widecheck{u}_\eps$ strongly converges to $\widecheck{u}_0$ in $L^2(\widetilde{e})$. Since $k_{\rm per} \left( e_i+\nabla w_i \right)$ is continuous and bounded in $\RR^2$, we obtain that the first term of the right-hand side of~\eqref{eq:star_boundary} goes to 0 when $\eps\to 0$.

Using~\eqref{eq:alpha}, we recast the second term of the right-hand side of~\eqref{eq:star_boundary} as
\begin{align}
  & \int_{\widetilde{e}} \widecheck{u}_0 \, \partial_i\varphi \, \left[ k_\eps \left( e_i+\nabla w_i(\cdot/\eps) \right) \right] \cdot n_{\widetilde{e}}
  \nonumber
  \\
  &=
  \int_{\widetilde{e}} \widecheck{u}_0 \, \partial_i \varphi \, k^\star \, e_i \cdot n_{\widetilde{e}} + \int_{\widetilde{e}} \widecheck{u}_0 \, \partial_i\varphi \left( \begin{array}{c} \partial_{x_2} \alpha_i(\cdot/\eps) \\ -\partial_{x_1}\alpha_i(\cdot/\eps) \end{array} \right) \cdot n_{\widetilde{e}}
  \nonumber
  \\
  &=
  \int_{\widetilde{e}} \widecheck{u}_0 \, \partial_i \varphi \, k^\star \, e_i \cdot n_{\widetilde{e}} + \int_{\widetilde{e}} \widecheck{u}_0 \, \partial_i\varphi \, \partial_{\tau_{\widetilde{e}}} \alpha_i(\cdot/\eps),
  \label{eq:eps_boundary}
\end{align}
where $\partial_{\tau_{\widetilde{e}}} \alpha_i$ is the tangential derivative (in the direction of the edge $\widetilde{e}$) of the function $\alpha_i$. 

We now claim that the last term in the right-hand side of~\eqref{eq:eps_boundary} goes to 0 when $\eps \to 0$, that is
\begin{equation}\label{eq:lim_eps_boundary}
  \lim_{\eps \to 0} \int_{\widetilde{e}} \widecheck{u}_0 \, \partial_i\varphi \, \partial_{\tau_{\widetilde{e}}} \alpha_i(\cdot/\eps) = 0.
\end{equation}
We prove this result using an interpolation argument similar to the one used in~\cite[Proof of Lemma~4.6]{lozinski_chinois}. Suppose momentarily that $\widecheck{u}_0 \in H^1(\widetilde{e})$. Using an integration by parts, we have
$$
\int_{\widetilde{e}} \widecheck{u}_0 \, \partial_i\varphi \, \partial_{\tau_{\widetilde{e}}} \alpha_i(\cdot/\eps)
=
\eps \left[ \widecheck{u}_0 \, \partial_i \varphi \, \alpha_i(\cdot/\eps) \right] \big|_{\partial{\widetilde{e}}} - \eps \int_{\widetilde{e}} \alpha_i(\cdot/\eps) \, \partial_{\tau_{\widetilde{e}}}\left( \widecheck{u}_0 \, \partial_i\varphi \right),
$$
and therefore, using that $\alpha_i \in C^1(\RR^2)$ is a periodic function (and thus bounded) and that the injection $H^1(\widetilde{e}) \subset C^0\big(\overline{\widetilde{e}}\big)$ is continuous, we obtain
\begin{align}
  \nonumber
  & \left| \int_{\widetilde{e}} \widecheck{u}_0 \, \partial_i\varphi \, \partial_{\tau_{\widetilde{e}}} \alpha_i(\cdot/\eps) \right|
  \\
  \nonumber
  &\leq
  \eps \, \| \alpha_i \|_{C^0(\RR^2)} \big( 2 \, \| \widecheck{u}_0 \|_{C^0(\overline{\widetilde{e}})} \| \partial_i\varphi\|_{C^0(\overline{\widetilde{e}})} + 2 \, \| \widecheck{u}_0 \|_{H^1(\widetilde{e})} \| \partial_i\varphi\|_{H^1(\widetilde{e})} \big)
  \\
  \label{eq:interp_1}
  &\leq
  C \, \eps \, \|\alpha_i\|_{C^0(\RR^2)} \| \widecheck{u}_0\|_{H^1(\widetilde{e})} \|\partial_i\varphi\|_{H^1(\widetilde{e})},
\end{align}
for some constant $C$ independent of $\eps$. On the other hand, using simply that $\widecheck{u}_0 \in L^2(\widetilde{e})$, we have 
\begin{equation}\label{eq:interp_1.2}
  \left| \int_{\widetilde{e}} \widecheck{u}_0 \, \partial_i\varphi \, \partial_{\tau_{\widetilde{e}}} \alpha_i(\cdot/\eps) \right| \leq \| \nabla\alpha_i \|_{C^0(\RR^2)} \| \widecheck{u}_0 \|_{L^2(\widetilde{e})} \| \partial_i\varphi \|_{L^2(\widetilde{e})}.
\end{equation}
In the statement of Lemma~\ref{th:multiplication}, we have assumed that $\widecheck{u}_0 \in H^1(D_c \cup D_f)$, which implies that $\widecheck{u}_0 \in H^{1/2}(\widetilde{e})$. By interpolation between~\eqref{eq:interp_1} and~\eqref{eq:interp_1.2}, we thus obtain
\begin{equation}\label{eq:regul_alpha}
\left| \int_{\widetilde{e}} \widecheck{u}_0 \, \partial_i\varphi \, \partial_{\tau_{\widetilde{e}}}\alpha_i(\cdot/\eps) \right| \leq C \, \eps^{1/2} \, \| \alpha_i \|_{C^1(\RR^2)} \| \widecheck{u}_0 \|_{H^{1/2}(\widetilde{e})} \| \partial_i\varphi \|_{H^{1/2}(\widetilde{e})},
\end{equation}
which of course implies~\eqref{eq:lim_eps_boundary}.

\medskip

Collecting~\eqref{eq:limit_eps}, \eqref{eq:second_term}, \eqref{eq:star_boundary}, \eqref{eq:eps_boundary} and~\eqref{eq:lim_eps_boundary}, we infer that
\begin{align*}
  & \lim_{\eps\to 0} \int_{D_c} k_\eps \nabla \widecheck{u}_\eps \cdot \nabla \widecheck{v}
  \\
  &= - \sum_{i = 1}^2 \int_{D_c} \widecheck{u}_0 \, k^\star \, e_i \cdot \partial_i\nabla\varphi + \sum_{i = 1}^2 \int_{\Gamma_f} \widecheck{u}_0 \, \partial_i\varphi \, k^\star \, e_i \cdot n_{\Gamma_f} + \sum_{i = 1}^2 \int_{\Gamma_c} \widecheck{u}_0 \, \partial_i\varphi \, k^\star \, e_i \cdot n_{\Gamma_c}
  \\
  &= \int_{D_c} k^\star \nabla \widecheck{u}_0 \cdot \nabla \varphi,
\end{align*}
where we again have used an integration by parts to deduce the last equality. 

We have thus identified the limit when $\eps\to 0$ of the first term of~\eqref{eq:arlequin_2nd_line}. Proceeding similarly for the second term, we have
$$
\int_{D_f} k_\eps \nabla \widecheck{u}_\eps \cdot \nabla \widecheck{v} \xrightarrow[\eps \to 0]{} \int_{D_f} k^\star \nabla \widecheck{u}_0 \cdot \nabla \varphi,
$$
for $\widecheck{v}$ given by~\eqref{eq:test_function}. The last two terms of~\eqref{eq:arlequin_2nd_line} are easy to handle since $\psi^H_{\overline{k}, k_\eps}$ is assumed to converge (in the finite dimensional space $W_H^{\rm enrich}$) to some $\psi^H_0$ and the test function $\widecheck{v}$ (which depends on $\eps$) weakly converges in $H^1(D_c)$ to $\varphi$. Thus, passing to the limit in~\eqref{eq:arlequin_2nd_line} with the test function~\eqref{eq:test_function} (and reinstating our original notation) yields
\begin{multline*}
  \forall \varphi \in C^\infty(\overline{D_c \cup D_f}), \quad \frac{1}{2} \int_{D_c} k^\star \nabla \widecheck{u}_0 \cdot \nabla \varphi + \int_{D_f} k^\star \nabla \widecheck{u}_0 \cdot \nabla \varphi \\ - \int_{D_c} \nabla \psi^H_0 \cdot \nabla \varphi - \int_{D_c} \psi^H_0 \varphi = 0.
\end{multline*}
We finally use the density of $C^\infty(\overline{D_c \cup D_f})$ in $H^1(D_c \cup D_f)$ to extend this equality for any $\varphi \in H^1(D_c \cup D_f)$.

\medskip

Passing to the limit $\eps\to 0$ in the first and third lines of~\eqref{eq:system_k_star} is straightforward. We deduce that $(\overline{u}^H_0,\widecheck{u}_0,\psi^H_0)$ is a solution to~\eqref{eq:system_contradiction}. We conclude the proof of Lemma~\ref{th:multiplication} by noting that~\eqref{eq:system_contradiction} has a unique solution. 
\end{proof}

\subsubsection{The specific case of homogeneous materials} \label{sec:tech_lemma_consistency}

We have pointed out in Section~\ref{sec:opt_heuristics} that, if the material in $D \cup D_c \cup D_f$ is homogeneous, then the response is linear, and that the converse statement also holds true. We now make this assertion precise by studying the following system (note that we again assume, similarly to Section~\ref{sec:tech_lemma_homog}, that $h = 0$): for any constant, scalar-valued coefficients $\overline{k}_a$ and $\overline{k}_b$, find $\overline{u}^H_{\overline{k}_a, \overline{k}_b} \in V_H^{\rm Dir BC}$, $\widecheck{u}_{\overline{k}_a, \overline{k}_b} \in H^1(D_c \cup D_f)$ and $\psi^H_{\overline{k}_a, \overline{k}_b} \in W_H^{\rm enrich}$ such that
\begin{equation} \label{eq:arlequin_limit_lemma}
  \begin{cases}
  \forall \overline{v}^H \in V_H^0, \quad & \overline{A}_{\overline{k}_a}(\overline{u}^H_{\overline{k}_a, \overline{k}_b},\overline{v}^H) + {\cal C}(\overline{v}^H,\psi^H_{\overline{k}_a, \overline{k}_b}) = 0,
  \\ \noalign{\vskip 3pt}
  \forall \widecheck{v} \in H^1(D_c \cup D_f), \quad & \widecheck{A}_{\overline{k}_b}(\widecheck{u}_{\overline{k}_a, \overline{k}_b},\widecheck{v}) - {\cal C}(\widecheck{v},\psi^H_{\overline{k}_a, \overline{k}_b}) = 0,
  \\ \noalign{\vskip 3pt}
  \forall \phi^H \in W_H^{\rm enrich}, \quad & {\cal C}(\overline{u}^H_{\overline{k}_a, \overline{k}_b}-\widecheck{u}_{\overline{k}_a, \overline{k}_b},\phi^H) = 0.
  \end{cases}
\end{equation}
On purpose, we have made explicit the dependency of the solution to~\eqref{eq:arlequin_limit_lemma} with respect to $\overline{k}_a$ and $\overline{k}_b$.

\begin{lemma}\label{lemma:optimization}
  We consider~\eqref{eq:arlequin_limit_lemma} for some constant coefficients $\overline{k}_a \in [0,\infty)$ and $\overline{k}_b \in (0,\infty)$.
  
  If $\overline{k}_a = \overline{k}_b$, then the solution to~\eqref{eq:arlequin_limit_lemma} is $\overline{u}^H_{\overline{k}_a, \overline{k}_b}(x) = x_1$ in $D \cup D_c$, $\widecheck{u}_{\overline{k}_a, \overline{k}_b}(x) = x_1$ in $D_c \cup D_f$ and $\psi^H_{\overline{k}_a, \overline{k}_b} = \overline{k}_a \, \psi_0$ in $D_c$, where $\psi_0$ is the Lagrange multiplier function defined by~\eqref{eq:psi_0}.

  Conversely, if $(\overline{u}^H_{\overline{k}_a, \overline{k}_b},\widecheck{u}_{\overline{k}_a, \overline{k}_b},\psi^H_{\overline{k}_a, \overline{k}_b})$ is a solution to~\eqref{eq:arlequin_limit_lemma} with $\overline{u}^H_{\overline{k}_a, \overline{k}_b}(x) = x_1$ in $D \cup D_c$, then $\widecheck{u}_{\overline{k}_a, \overline{k}_b}(x) = x_1$ in $D_c \cup D_f$, $\psi^H_{\overline{k}_a, \overline{k}_b} = \overline{k}_a \, \psi_0$ in $D_c$ and $\overline{k}_a = \overline{k}_b$.
\end{lemma}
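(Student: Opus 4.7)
The plan is to establish both implications by direct computation, keyed to two integration-by-parts identities. Starting from~\eqref{eq:def_Abar}--\eqref{eq:def_Aeps}, integrating by parts, using that $\overline{v}^H$ vanishes on $\Gamma$ and that traces of $\widecheck{v}$ match across $\Gamma_f$, and recognizing the surviving boundary integrals as the right-hand side of~\eqref{eq:psi_0_FV}, one obtains the two workhorse identities
\begin{equation*}
\overline{A}_{\overline{k}_a}(x_1,\overline{v}^H) = -\overline{k}_a\,{\cal C}(\psi_0,\overline{v}^H) \quad \forall\, \overline{v}^H\in V_H^0, \qquad \widecheck{A}_{\overline{k}_b}(x_1,\widecheck{v}) = \overline{k}_b\,{\cal C}(\psi_0,\widecheck{v}) \quad \forall\, \widecheck{v}\in H^1(D_c\cup D_f).
\end{equation*}

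The forward direction ($\overline{k}_a=\overline{k}_b$) is then pure verification: plugging $(x_1,x_1,\overline{k}_a\psi_0)$ into~\eqref{eq:arlequin_limit_lemma}, the first equation reduces to $-\overline{k}_a{\cal C}(\psi_0,\overline{v}^H) + \overline{k}_a{\cal C}(\overline{v}^H,\psi_0)=0$ by symmetry of ${\cal C}$, the second to $(\overline{k}_b-\overline{k}_a){\cal C}(\psi_0,\widecheck{v})=0$ by the hypothesis, and the third is trivial. Uniqueness of the solution to~\eqref{eq:arlequin_limit_lemma} completes this half.

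For the converse, assume $\overline{u}^H=x_1$ in $D\cup D_c$ and proceed in three steps. \emph{Step (a): $\widecheck{u}=x_1$.} Test the second equation with $\widecheck{v}=\widecheck{u}-x_1\in H^1(D_c\cup D_f)$; the right-hand side ${\cal C}(\widecheck{u}-x_1,\psi^H)$ vanishes by the third equation applied to $\phi^H=\psi^H\in W_H^{\rm enrich}$. Splitting $\widecheck{u}=(\widecheck{u}-x_1)+x_1$ in the first argument and invoking the second identity plus the third equation again (with $\phi^H=\psi_0\in W_H^{\rm enrich}$) yields $\widecheck{A}_{\overline{k}_b}(\widecheck{u}-x_1,\widecheck{u}-x_1)=0$. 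Coercivity (since $\overline{k}_b>0$) forces $\nabla(\widecheck{u}-x_1)=0$ on each of $D_c$ and $D_f$; trace matching across $\Gamma_f$ upgrades this to a single constant on $D_c\cup D_f$, and the third equation with $\phi^H=1\in W_H$ kills that constant. \emph{Step (b): $\psi^H=\overline{k}_b\psi_0$.} With $\widecheck{u}=x_1$ now known, the second equation combined with the identity gives ${\cal C}(\widecheck{v},\psi^H-\overline{k}_b\psi_0)=0$ for every $\widecheck{v}\in H^1(D_c\cup D_f)$; since any $\phi\in H^1(D_c)$ extends to $H^1(D_c\cup D_f)$, I may test against (an extension of) $\psi^H-\overline{k}_b\psi_0\in H^1(D_c)$ itself and use ${\cal C}(\phi,\phi)=\|\phi\|^2_{H^1(D_c)}$ to conclude. \emph{Step (c): $\overline{k}_a=\overline{k}_b$.} Symmetrically, the first equation together with the first identity delivers ${\cal C}(\overline{v}^H,\psi^H-\overline{k}_a\psi_0)=0$ for all $\overline{v}^H\in V_H^0$; combined with step~(b), this reads $(\overline{k}_b-\overline{k}_a){\cal C}(\overline{v}^H,\psi_0)=0$. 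Since the restriction map $V_H^0\to W_H$ is surjective (extend using the matching coarse meshes on $\Gamma_c$ and zero nodal values on $\Gamma$), I pick $\overline{v}^H$ with $\overline{v}^H|_{D_c}=x_1\in W_H$ and compute, directly from~\eqref{eq:psi_0_FV} on the two nested squares, ${\cal C}(x_1,\psi_0)=2(L_c^2+L_f^2)>0$, forcing $\overline{k}_a=\overline{k}_b$ and hence $\psi^H=\overline{k}_a\psi_0$.

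The main obstacle is step~(c), where the first equation alone only delivers orthogonality of $\psi^H-\overline{k}_a\psi_0$ against $V_H^0|_{D_c}=W_H$, which is strictly smaller than $W_H^{\rm enrich}$, so one cannot pin $\psi^H$ down from the first equation alone. The enrichment of $W_H$ by $\psi_0$ is what makes the argument close: it is needed in step~(a) to annihilate the coupling term $\overline{k}_b{\cal C}(\psi_0,\widecheck{u}-x_1)$, and in step~(c) the explicit positivity ${\cal C}(x_1,\psi_0)>0$ is precisely the mechanism that separates $\overline{k}_a$ from $\overline{k}_b$.
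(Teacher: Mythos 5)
Your proof is correct, and it takes a genuinely different route from the paper. Both proofs turn on the identity $\overline{A}_{1}(x_1,\overline{v}) = -{\cal C}(\overline{v},\psi_0)$ for $\overline{v}\in V_H^0$ (the paper's~\eqref{eq:LMvariational}); you complement it with the twin identity $\widecheck{A}_{\overline{k}_b}(x_1,\widecheck{v}) = \overline{k}_b\,{\cal C}(\psi_0,\widecheck{v})$ on $H^1(D_c\cup D_f)$, which the paper never isolates. With those in hand you dispose of the converse in three clean moves: an energy argument (test the second equation with $\widecheck{u}-x_1$, kill the coupling with the constraint tested against $\psi^H$ and against $\psi_0$, and conclude $\widecheck{A}_{\overline{k}_b}(\widecheck{u}-x_1,\widecheck{u}-x_1)=0$, hence $\widecheck{u}=x_1$ after normalizing the constant via $\phi^H=1$); then $\psi^H=\overline{k}_b\psi_0$ falls out of the second equation by surjectivity of the restriction $H^1(D_c\cup D_f)\to H^1(D_c)$; then the first equation reduces to $(\overline{k}_b-\overline{k}_a)\,{\cal C}(\overline{v}^H,\psi_0)=0$, settled by the explicit positivity ${\cal C}(x_1,\psi_0)=2(L_c^2+L_f^2)>0$ (which you correctly compute from~\eqref{eq:psi_0_FV} and the nested-square geometry). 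The paper instead parametrizes $\psi^H = \tau\psi_0 + (\overline{k}_a-\tau)\Pi_H(\psi_0)$ from the first equation, decomposes $\widecheck{u}$ along the auxiliary solutions $\widecheck{u}_1,\widecheck{u}_2$ of~\eqref{eq:u1_u2}, and closes via the constraint and a contradiction argument establishing the linear independence of $\widecheck{u}_1$ and $\Pi_H^{\rm enrich}(\widecheck{u}_2)$. Your proof is shorter and more elementary, pinning down $\widecheck{u}$ and $\psi^H$ explicitly rather than leaving a one-parameter family to be resolved at the end; what the paper's heavier machinery buys is that the linear-independence fact and the functions $\widecheck{u}_1,\widecheck{u}_2$ are reused verbatim in Step~2 of the uniqueness proof of Theorem~\ref{th:uniqueness}, so the extra setup pays off later. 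One small caveat on your final remark: the first equation alone indeed only constrains $\psi^H-\overline{k}_a\psi_0$ against $W_H$ and not $W_H^{\rm enrich}$, but that is not where the enrichment is crucial in your proof — it is crucial at step~(a), exactly as you say, where testing the constraint against $\phi^H=\psi_0$ annihilates $\overline{k}_b\,{\cal C}(\psi_0,\widecheck{u}-x_1)$; without the enrichment that step fails and the energy identity does not close.
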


This result is the analogue of~\cite[Lemma~2.1]{ref_olga_comp} when taking into account the discretization on the coarse mesh ${\cal T}_H$. Note also that the fact that we work in the enriched space $W_H^{\rm enrich}$ rather than $W_H$ is pivotal for this lemma.

\begin{remark} \label{rem:consistency}
  We note that the approach using $W_{H,h}^{\rm enrich}$ (or $W_H^{\rm enrich}$) as the Lagrange multiplier approximation space is {\em consistent} in the following sense. Lem\-ma~\ref{th:multiplication} means that, in the limit $\eps \to 0$, the problem~\eqref{eq:system_k_star} is well approximated by its homogenized limit~\eqref{eq:system_contradiction}.

  Considering the choice $\overline{k} = k^\star$, we wish the function $\overline{u}^H_{k^\star,k^\star}(x) = x_1$ to be a solution of that system, which ensures that $\overline{k} = k^\star$ is a minimizer of the optimization problem~\eqref{eq:optim_J}. When working with the enriched space $W_H^{\rm enrich}$, this is indeed the case: in view of the first assertion of Lemma~\ref{lemma:optimization}, the unique solution to~\eqref{eq:system_contradiction} with $\overline{k} = k^\star$ is $(\overline{u}^H_{k^\star,k^\star},\widecheck{u}_{k^\star,k^\star},\psi^H_{k^\star,k^\star}) = (x_1,x_1,k^\star \psi_0)$.
  And conversely, if $\overline{u}^H_{\overline{k},k^\star}(x) = x_1$, that is if we reach a minimum in~\eqref{eq:optim_J}, then $\overline{k} = k^\star$ and we have correctly recovered the homogenized coefficient.
\end{remark}

\begin{proof}[Proof of Lemma~\ref{lemma:optimization}] 
We start by the first assertion and assume $\overline{k}_a = \overline{k}_b > 0$. We immediately get the result, recalling that the system~\eqref{eq:arlequin_limit_lemma} has a unique solution and noticing that $\big( \overline{u}^H_{\overline{k}_a,\overline{k}_b}(x),\widecheck{u}_{\overline{k}_a,\overline{k}_b}(x),\psi^H_{\overline{k}_a,\overline{k}_b}(x) \big) = (x_1,x_1,\overline{k}_a \, \psi_0(x))$, where $\psi_0$ is defined by~\eqref{eq:psi_0}, is a solution to~\eqref{eq:arlequin_limit_lemma}.

We now turn to the second assertion and hence assume that $\overline{u}^H_{\overline{k}_a,\overline{k}_b}(x) = x_1$ in $D \cup D_c$. The first line of~\eqref{eq:arlequin_limit_lemma} reads as
\begin{equation}\label{eq:first_line}
  \forall \overline{v}^H \in V_H^0, \qquad \overline{A}_{\overline{k}_a}(x_1,\overline{v}^H) + {\cal C}(\overline{v}^H,\psi^H_{\overline{k}_a,\overline{k}_b}) = 0.
\end{equation}
Since $\psi^H_{\overline{k}_a,\overline{k}_b} \in W_H^{\rm enrich} = W_H + \text{Span $\psi_0$}$, we can represent it as 
\begin{equation}\label{eq:lagrange_multiplier_expansion}
  \psi^H_{\overline{k}_a,\overline{k}_b} = \tau \, \psi_0 + \widetilde{\psi}^H,
\end{equation} 
for some $\tau \in \RR$ and some $\widetilde{\psi}^H \in W_H$. We infer from~\eqref{eq:first_line} and~\eqref{eq:lagrange_multiplier_expansion} that
$$
\forall \overline{v}^H \in V_H^0, \qquad {\cal C}(\overline{v}^H,\widetilde{\psi}^H) = - \tau \, {\cal C}(\overline{v}^H,\psi_0) - \overline{A}_{\overline{k}_a}(x_1,\overline{v}^H),
$$
that provides us with an expression of $\widetilde{\psi}^H$ in terms of $\tau$ and $\overline{k}_a$, using the linearity of the problem and the fact that $\overline{k}_a$ is a scalar: 
\begin{equation} \label{eq:covid6}
\widetilde{\psi}^H = - \tau \, \widetilde{\psi}_1^H + \overline{k}_a \, \widetilde{\psi}_2^H,
\end{equation}
with $\widetilde{\psi}_1^H \in W_H$ and $\widetilde{\psi}_2^H \in W_H$ uniquely defined by
\begin{equation} \label{eq:LMtech}
  \begin{array}{l}
    \forall \overline{v}^H \in V_H^0, \qquad {\cal C}(\overline{v}^H,\widetilde{\psi}_1^H) = {\cal C}(\overline{v}^H,\psi_0),
    \\ \noalign{\vskip 3pt}
    \forall \overline{v}^H \in V_H^0, \qquad {\cal C}(\overline{v}^H,\widetilde{\psi}_2^H) = -\overline{A}_1(x_1,\overline{v}^H),
  \end{array}
\end{equation}
where the bilinear form $\overline{A}_1$ is defined by
\begin{equation} \label{eq:def_overline_A_1}
\overline{A}_1(\overline{u},\overline{v}) = \int_D \nabla \overline{u}(x) \cdot \nabla \overline{v}(x) + \frac{1}{2} \int_{D_c} \nabla \overline{u}(x) \cdot \nabla \overline{v}(x).
\end{equation}

Let us introduce the $H^1$-orthogonal projection operators to the coarse finite element spaces $\Pi_H : H^1(D_c) \to W_H$ and $\Pi_H^{\rm enrich} : H^1(D_c) \to W^{\rm enrich}_H$ defined as follows: for any $v \in H^1(D_c)$, $\Pi_H(v) \in W_H$ is such that
\begin{equation}\label{eq:Pih_a}
  \forall \varphi^H \in W_H, \qquad \left(\Pi_H(v),\varphi^H\right)_{H^1(D_c)} = \left(v,\varphi^H\right)_{H^1(D_c)},
\end{equation}
and $\Pi_H^{\rm enrich}(v) \in W^{\rm enrich}_H$ is such that
\begin{equation}\label{eq:Pih_b}
  \forall \varphi^H \in W_H^{\rm enrich}, \qquad \left(\Pi_H^{\rm enrich}(v),\varphi^H\right)_{H^1(D_c)} = \left(v,\varphi^H\right)_{H^1(D_c)}, 
\end{equation}
where $\left(\cdot,\cdot\right)_{H^1(D_c)}$ is the $H^1$ scalar product in $D_c$.

We observe that the Lagrange multiplier $\psi_0$ defined by~\eqref{eq:psi_0} (the variational formulation of which is~\eqref{eq:psi_0_FV}) satisfies
\begin{equation}\label{eq:LMvariational}
  \forall \overline{v} \in H^1(D \cup D_c) \ \ \text{with $\overline{v} = 0$ on $\Gamma$}, \quad \overline{A}_1(x_1,\overline{v}) = - {\cal C}(\overline{v},\psi_0).
\end{equation}
In view of the first line of~\eqref{eq:LMtech} and of the definition of $\Pi_H$, we have that $\widetilde{\psi}^H_1 = \Pi_H(\psi_0)$. In view of~\eqref{eq:LMvariational} and the second line of~\eqref{eq:LMtech}, we see that $\widetilde{\psi}^H_2$ satisfies the same equation as $\widetilde{\psi}^H_1$. We thus have $\widetilde{\psi}^H_2 = \widetilde{\psi}^H_1 = \Pi_H(\psi_0)$. Inserting this relation in~\eqref{eq:covid6}, we deduce that $\widetilde{\psi}^H = (\overline{k}_a - \tau) \, \Pi_H(\psi_0)$, and thus
\begin{equation}\label{eq:psi_decomposition}
  \psi^H_{\overline{k}_a,\overline{k}_b} = \tau \, \psi_0 + (\overline{k}_a - \tau) \, \Pi_H(\psi_0),
\end{equation}
where the constant $\tau$ will be determined later.

\medskip

We now turn to the second line of~\eqref{eq:arlequin_limit_lemma}. Let us introduce $\widecheck{u}_1$ and $\widecheck{u}_2$ in $H^1(D_c \cup D_f)$ such that $\dps \int_{D_c} \widecheck{u}_1 = \int_{D_c} \widecheck{u}_2 = 0$ and 
\begin{equation}\label{eq:u1_u2}
  \begin{array}{ll}
    \forall \widecheck{v} \in H^1(D_c \cup D_f), \qquad & \widecheck{A}_1(\widecheck{u}_1,\widecheck{v}) = {\cal C}(\widecheck{v},\psi_0),
    \\ \noalign{\vskip 3pt}
    \forall \widecheck{v} \in H^1(D_c \cup D_f), \qquad & \widecheck{A}_1(\widecheck{u}_2,\widecheck{v}) = {\cal C}(\widecheck{v},\Pi_H(\psi_0)),
  \end{array}
\end{equation}
where the bilinear form $\widecheck{A}_1$ is defined by
$$
\widecheck{A}_1(\widecheck{u},\widecheck{v}) = \frac{1}{2} \int_{D_c} \nabla \widecheck{u}(x) \cdot \nabla \widecheck{v}(x) + \int_{D_f} \nabla \widecheck{u}(x) \cdot \nabla \widecheck{v}(x).
$$
Using the definition~\eqref{eq:psi_0} of the Lagrange multiplier $\psi_0$, we obtain from the first line of~\eqref{eq:u1_u2} that $\widecheck{u}_1(x) = x_1$ in $D_c \cup D_f$.

Inserting the expression~\eqref{eq:psi_decomposition} for $\psi^H_{\overline{k}_a, \overline{k}_b}$ in the second line of~\eqref{eq:arlequin_limit_lemma}, we obtain
\begin{equation}\label{eq:u_star}
  \widecheck{u}_{\overline{k}_a, \overline{k}_b} = \lambda + \frac{\tau}{\overline{k}_b} \, \widecheck{u}_1 + \frac{\overline{k}_a - \tau}{\overline{k}_b} \, \widecheck{u}_2,
\end{equation}
where $\lambda \in \RR$ is an arbitrary constant.

\medskip

To identify the constants $\lambda$ and $\tau$, we use the third line of~\eqref{eq:arlequin_limit_lemma}, that reads as
\begin{equation}\label{eq:constrain}
  \forall \phi^H \in W_H^{\rm enrich}, \quad {\cal C}\left(\lambda + \frac{\tau-\overline{k}_b}{\overline{k}_b} \, \widecheck{u}_1 + \frac{\overline{k}_a - \tau}{\overline{k}_b} \, \widecheck{u}_2, \phi^H \right) = 0,
\end{equation}
where we have used that $\overline{u}^H_{\overline{k}_a,\overline{k}_b}(x) = x_1 = \widecheck{u}_1(x)$ in $D_c$. Taking $\phi^H = 1$ and using that the mean over $D_c$ of $\widecheck{u}_1$ and $\widecheck{u}_2$ vanishes, we get $\lambda = 0$.

We claim that $\widecheck{u}_1$ and $\Pi^{\rm enrich}_H (\widecheck{u}_2)$ are linearly independent functions on $D_c$, a fact that will be useful below. In order to prove this claim, we argue by contradiction. Since $\widecheck{u}_1$ does not identically vanish on $D_c$, we assume that there exists $\alpha \in \RR$ such that 
\begin{equation}\label{eq:collinearity}
  \Pi^{\rm enrich}_H (\widecheck{u}_2) = \alpha \, \widecheck{u}_1.
\end{equation} 
For any $\widecheck{v} \in H^1(D_c \cup D_f)$, we compute, using~\eqref{eq:u1_u2}, that
\begin{align*}
  \widecheck{A}_1(\widecheck{u}_2 - \alpha \, \widecheck{u}_1,\widecheck{v})
  &=
  \widecheck{A}_1(\widecheck{u}_2,\widecheck{v}) - \alpha \, \widecheck{A}_1(\widecheck{u}_1,\widecheck{v})
  \\
  &=
  {\cal C}(\widecheck{v},\Pi_H(\psi_0)) - \alpha \, {\cal C}(\widecheck{v},\psi_0)
  \\
  &=
  {\cal C}(\widecheck{v},\Pi_H(\psi_0) - \alpha \, \psi_0).
\end{align*}
Taking $\widecheck{v} = \widecheck{u}_2 - \alpha \, \widecheck{u}_1$ in the equation above, we obtain that
\begin{align} 
  \widecheck{A}_1\left(\widecheck{u}_2 - \alpha \, \widecheck{u}_1,\widecheck{u}_2 - \alpha \, \widecheck{u}_1\right)
  &=
  {\cal C}(\widecheck{u}_2 - \alpha \, \widecheck{u}_1,\Pi_H(\psi_0)-\alpha \, \psi_0)
  \nonumber
  \\
  &=
  {\cal C}(\Pi_H^{\rm enrich}\left(\widecheck{u}_2-\alpha \, \widecheck{u}_1\right),\Pi_H(\psi_0)-\alpha \, \psi_0),
  \label{eq:colliniearity2}
\end{align}
where the last equality stems from the definition of the projection operator $\Pi_H^{\rm enrich}$. We next observe that $\Pi_H^{\rm enrich}\left(\widecheck{u}_2-\alpha \, \widecheck{u}_1\right) = 0$, because of~\eqref{eq:collinearity} and the fact that $\Pi_H^{\rm enrich}(\widecheck{u}_1) = \widecheck{u}_1$ (recall that $\widecheck{u}_1(x) = x_1$ in $D_c \cup D_f$ and thus $\widecheck{u}_1 \in W_H^{\rm enrich}$). The right-hand side of~\eqref{eq:colliniearity2} thus vanishes. By definition of the bilinear form $\widecheck{A}_1$, this implies that $\widecheck{u}_2 = \alpha \, \widecheck{u}_1 + \widehat{\lambda}$ on $D_c \cup D_f$ for some constant $\widehat{\lambda}$. Since $\widecheck{u}_2$ and $\widecheck{u}_1$ are functions the average over $D_c$ of which vanishes, we obtain $\widehat{\lambda}=0$ and thus $\widecheck{u}_2 = \alpha \, \widecheck{u}_1$.

We thus infer from~\eqref{eq:u1_u2} that, for any $\widecheck{v} \in H^1(D_c \cup D_f)$,
$$
{\cal C}(\widecheck{v},\Pi_H(\psi_0)) = \widecheck{A}_1(\widecheck{u}_2,\widecheck{v}) = \alpha \, \widecheck{A}_1(\widecheck{u}_1,\widecheck{v}) = \alpha \, {\cal C}(\widecheck{v},\psi_0).
$$
This yields $\Pi_H(\psi_0) = \alpha \, \psi_0$. If $\alpha \neq 0$, this implies that $\psi_0 \in W_H$, a fact that is obviously wrong. We then get $\alpha = 0$, hence $\Pi_H(\psi_0) = 0$, and thus, for any $\varphi^H \in W_H$,
\begin{multline*}
0
=
\left(\Pi_H(\psi_0),\varphi^H\right)_{H^1(D_c)} = \left(\psi_0,\varphi^H\right)_{H^1(D_c)}
=
{\cal C}(\psi_0,\varphi^H)
\\
=
\frac{1}{2} \int_{\Gamma_c} (e_1 \cdot n_{\Gamma_c}) \, \varphi^H - \frac{1}{2} \int_{\Gamma_f} (e_1 \cdot n_{\Gamma_f}) \, \varphi^H,
\end{multline*}
where we have used~\eqref{eq:psi_0_FV} in the last equality. Since the value of $\varphi^H$ can be chosen independently on $\Gamma_c$ and $\Gamma_f$, this implies that $\dps \int_{\Gamma_c} \varphi^H \, e_1 \cdot n_{\Gamma_c} = 0 = \int_{\Gamma_f} \varphi^H \, e_1 \cdot n_{\Gamma_f}$, which leads to a contradiction.
This concludes the proof of our claim.

\medskip

We now return to~\eqref{eq:constrain}, recall that $\lambda=0$ and take
$$
\phi^H = \Pi^{\rm enrich}_H\left(\frac{\tau-\overline{k}_b}{\overline{k}_b} \, \widecheck{u}_1 + \frac{\overline{k}_a - \tau}{\overline{k}_b} \, \widecheck{u}_2\right).
$$
We hence obtain that
$$
\Pi^{\rm enrich}_H\left(\frac{\tau-\overline{k}_b}{\overline{k}_b} \, \widecheck{u}_1 + \frac{\overline{k}_a - \tau}{\overline{k}_b} \, \widecheck{u}_2\right) = 0,
$$
which reads as 
$$
\frac{\tau-\overline{k}_b}{\overline{k}_b} \, \widecheck{u}_1 + \frac{\overline{k}_a - \tau}{\overline{k}_b} \, \Pi^{\rm enrich}_H\left(\widecheck{u}_2\right) = 0.
$$
Using the linear independence of $\widecheck{u}_1$ and $\Pi_H^{\rm enrich}(\widecheck{u}_2)$, we obtain $\overline{k}_b = \tau = \overline{k}_a$. This concludes the proof of Lemma~\ref{lemma:optimization}.
\end{proof}

\subsection{Well-posedness of the optimization problem upon $\overline{k}$ for a fixed value of $\eps$} \label{sec:opt}

In this section, we investigate the existence of a minimizer to the optimization problem~\eqref{eq:optim_J}--\eqref{eq:def_J}. More precisely, we show the following theorem, which is our first main result.

\begin{theorem}\label{th:optimization}
  Let $k_\eps$ be given by~\eqref{eq:structure-k} for some fixed periodic coefficient $k_{\rm per}$ that satisfies the classical boundedness and coercivity conditions~\eqref{eq:boundedness+coercivity}. We make the regularity assumption~\eqref{eq:holder_continuity} and the geometric assumption~\eqref{eq:boundaries}. 
  
  For any fixed $\eps>0$, $H > 0$ and $h > 0$, the optimization problem~\eqref{eq:optim_J} has at least one solution $\overline{k}^{\rm opt}_\eps(H,h)$, provided the two variational conditions made precise in~\eqref{eq:J_contradiction} and~\eqref{eq:J_contradiction2} below are satisfied. These conditions in particular hold true in the limit $(h \to 0,\eps \to 0)$.
\end{theorem}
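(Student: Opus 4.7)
The plan is to apply the direct method of the calculus of variations on the admissible set $(0,\infty)$. Since this set is open and unbounded, the whole difficulty is to prevent a minimizing sequence from escaping to $0$ or to $+\infty$; the two variational conditions~\eqref{eq:J_contradiction} and~\eqref{eq:J_contradiction2} alluded to in the statement are precisely tailored to exclude each of these two degenerate scenarios.

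First I would establish that the map $\overline{k} \mapsto J_{\eps,H,h}(\overline{k})$ is continuous on $(0,\infty)$. In view of~\eqref{eq:def_J}, it suffices to prove continuity of $\overline{k} \mapsto \overline{u}^H_{\overline{k}, k_\eps}$ with values in the finite-dimensional space $V_H$. The coupled system~\eqref{eq:arlequin} is a finite-dimensional linear saddle-point problem whose underlying matrix depends affinely on $\overline{k}$; its well-posedness for each $\overline{k} > 0$ follows from the coercivity of $\overline{A}_{\overline{k}}$ on $V_H^0$, combined with an inf-sup condition afforded by working with the enriched multiplier space $W_{H,h}^{\rm enrich}$, and standard perturbation arguments then yield continuity of the solution with respect to $\overline{k}$.

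Next, I would pick a minimizing sequence $(\overline{k}_n)_{n \geq 0}$ of~\eqref{eq:optim_J} and rule out the possibilities $\overline{k}_n \to 0$ and $\overline{k}_n \to +\infty$. Conditions~\eqref{eq:J_contradiction} and~\eqref{eq:J_contradiction2} should assert, in each of these two cases, that the $\liminf$ of $J_{\eps,H,h}(\overline{k}_n)$ stays strictly above the value of $J_{\eps,H,h}$ at some reference test point $\overline{k}^\star \in (0,\infty)$, contradicting the minimizing property. The sequence $(\overline{k}_n)$ therefore lies in some compact subinterval $[a,b] \subset (0,\infty)$, from which I extract a convergent subsequence $\overline{k}_{n_j} \to \overline{k}^{\rm opt}_\eps(H,h)$ and conclude by continuity of $J_{\eps,H,h}$.

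The main step, and the main obstacle, is to justify that the two conditions do hold in the limit $(h \to 0, \eps \to 0)$. I would argue by contradiction: assume a sequence $\overline{k}_\eps$ tends to some $\overline{k}_0 \in \{0,+\infty\}$ along which $J_{\eps,H,h}(\overline{k}_\eps) \to 0$ (the strongest possible violation, since $J_{\eps,H,h} \geq 0$ is in any case the sharp lower bound). From the a~priori $H^1$ estimates deriving from the first two lines of~\eqref{eq:arlequin}, together with an appropriate rescaling of the Lagrange multiplier to prevent blow-up when $\overline{k}_\eps \to +\infty$, I would extract subsequences fitting the hypotheses of Corollary~\ref{coro:multiplication}. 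The limit triple then solves the system~\eqref{eq:system_contradiction} with $\overline{k}$ replaced by $\overline{k}_0$, and the vanishing of $J_{\eps,H,h}(\overline{k}_\eps)$ would force $\overline{u}^H_0(x) = x_1$ on $D \cup D_c$. Lemma~\ref{lemma:optimization}, applied with $\overline{k}_a = \overline{k}_0$ and $\overline{k}_b = k^\star$, would then impose $\overline{k}_0 = k^\star \in (0,\infty)$, contradicting $\overline{k}_0 \in \{0,+\infty\}$. The most delicate subcase is $\overline{k}_\eps \to +\infty$, where uniform energy bounds on $\overline{u}^H_{\overline{k}_\eps, k_\eps}$ in the $\overline{k}_\eps$-weighted norm degenerate and one must carefully rescale the multiplier and exploit the inf-sup stability provided by $W_{H,h}^{\rm enrich}$ in order to recover a nondegenerate limit system.
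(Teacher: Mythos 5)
Your direct-method skeleton (continuity of $J_{\eps,H,h}$ on $(0,\infty)$, a minimizing sequence, exclusion of escape to $0$ and to $+\infty$ via the two conditions) is indeed the paper's strategy, but your proposal leaves open precisely the two points where the theorem has content. First, the conditions~\eqref{eq:J_contradiction} and~\eqref{eq:J_contradiction2} are part of what must be produced: the paper formulates them as explicit thresholds independent of $\eps$ and $h$, namely $I_{\eps,H,h} < \int_{D\cup D_c}\big|\nabla\widetilde{u}_0^H-e_1\big|^2$, where $\widetilde{u}_0^H$ is the discrete solution of the unit-coefficient problem~\eqref{eq:contradiction}, and $I_{\eps,H,h} < \widetilde{I}_{0,H}$, where $\widetilde{I}_{0,H}\geq |D_c|>0$ is the minimal value of the objective over the solutions of the degenerate ($\overline{k}=0$) system~\eqref{eq:contradiction_arlequin}. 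With that formulation, the last claim of the theorem is short: one shows $\lim_{\eps\to0}\lim_{h\to0}J_{\eps,H,h}(k^\star)=0$ (standard finite element convergence in $h$, then Lemma~\ref{th:multiplication} and the first assertion of Lemma~\ref{lemma:optimization}), so $I_{\eps,H,h}$ falls below both fixed positive thresholds. Your surrogate description of the conditions ("the liminf of $J$ along degenerating sequences stays strictly above a reference value") never becomes a checkable statement and is essentially circular at fixed $\eps$.

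Second, and more seriously, your treatment of the case $\overline{k}^n\to+\infty$ does not go through with the tools you invoke: Corollary~\ref{coro:multiplication} and Lemma~\ref{lemma:optimization} only cover finite limits $\overline{k}_0\in[0,\infty)$, and both concern the limit $\eps\to0$ of the $h=0$ system, whereas the degeneration to be excluded is that of the minimizing sequence in $n$ at fixed $\eps$ and $h$ -- your sketch conflates these two limits, and the promised "rescaling of the multiplier plus inf-sup stability" is never substantiated. The paper's mechanism is different and elementary: the a priori bounds come from the minimizing property itself (comparison with $\overline{k}=k^\star$ and with the linear field, cf.~\eqref{eq:u_k_bound}), not from the first two lines of~\eqref{eq:arlequin}; one then tests the first line of~\eqref{eq:arlequin_n} with $\overline{u}^H_{\overline{k}^n,k_\eps}-\widetilde{u}_0^H$, and since $\widetilde{u}_0^H$ satisfies~\eqref{eq:contradiction}, the scalar $\overline{k}^n$ appears in~\eqref{eq:k_infty} multiplied by $\big\|\nabla\big(\overline{u}^H_{\overline{k}^n,k_\eps}-\widetilde{u}_0^H\big)\big\|_{L^2}^2$ while every other term converges; hence $\overline{k}^n$ converges (in particular stays bounded) unless the limit of $\overline{u}^H_{\overline{k}^n,k_\eps}$ coincides with $\widetilde{u}_0^H$, which is exactly the case excluded by~\eqref{eq:J_contradiction}. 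Without this (or an equivalent) device, the possibility $\overline{k}^n\to+\infty$ is not ruled out, so the existence claim is not established.
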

The uniqueness of the minimizer to~\eqref{eq:optim_J} is investigated in Section~\ref{sec:uniqueness} below, for $\eps$ sufficiently small.

\begin{remark} \label{rem:k_bounds}
If we impose that $\overline{k}$ is bounded from above (resp. bounded away from zero) by some finite positive constant in the minimization problem~\eqref{eq:optim_J}, then Theorem~\ref{th:optimization} holds true without the additional assumption~\eqref{eq:J_contradiction} (resp.~\eqref{eq:J_contradiction2}).
\end{remark}

The remainder of this Section~\ref{sec:opt} is devoted to the proof of Theorem~\ref{th:optimization}. To that aim, we consider a minimizing sequence $\{ \overline{k}^n \}_{n \in \NN}$ of the optimization problem~\eqref{eq:optim_J}, that is a sequence that satisfies the inequality 
\begin{equation} \label{eq:quasiminimizers} 
  I_{\eps, H, h} \leq J_{\eps, H, h}(\overline{k}^n) = \int_{D \cup D_c} \big| \nabla \overline{u}^H_{\overline{k}^n, k_\eps} - e_1 \big|^2 \leq I_{\eps, H, h} + \frac{1}{n},
\end{equation}
where $(\overline{u}^H_{\overline{k}^n, k_\eps},\widecheck{u}^h_{\eps,\overline{k}^n, k_\eps}, \psi^H_{\overline{k}^n, k_\eps}) \in V_H^{\rm Dir BC} \times V_h \times W_{H,h}^{\rm enrich}$ is the solution to~\eqref{eq:arlequin} for the tentative constant coefficient $\overline{k} = \overline{k}^n$, namely
\begin{equation} \label{eq:arlequin_n}
  \begin{cases}
  \forall \overline{v}^H \in V_H^0, \quad &\overline{A}_{\overline{k}^n}(\overline{u}^H_{\overline{k}^n, k_\eps},\overline{v}^H) + {\cal C}(\overline{v}^H,\psi^H_{\overline{k}^n, k_\eps}) = 0,
  \\ \noalign{\vskip 3pt}
  \forall \widecheck{v}^h \in V_h, \quad & \widecheck{A}_{k_\eps}(\widecheck{u}^h_{\eps,\overline{k}^n, k_\eps},\widecheck{v}^h) - {\cal C}(\widecheck{v}^h,\psi^H_{\overline{k}^n, k_\eps}) = 0,
  \\ \noalign{\vskip 3pt}
  \forall \phi^H \in W_{H,h}^{\rm enrich}, \quad & {\cal C}(\overline{u}^H_{\overline{k}^n, k_\eps}-\widecheck{u}^h_{\eps,\overline{k}^n, k_\eps},\phi^H) = 0.
  \end{cases}
\end{equation}
The proof of Theorem~\ref{th:optimization} falls in three steps:
\begin{itemize}
\item we first show a priori bounds on $(\overline{u}^H_{\overline{k}^n, k_\eps},\widecheck{u}^h_{\eps,\overline{k}^n, k_\eps}, \psi^H_{\overline{k}^n, k_\eps})$ in Section~\ref{sec:bounds}.
\item we next show that, up to a subsequence extraction, $\overline{k}^n$ converges to some limit $\overline{k}^\infty < \infty$ in Section~\ref{sec:k_n}, under assumption~\eqref{eq:J_contradiction}.
\item we then show that $\overline{k}^\infty > 0$ (under assumption~\eqref{eq:J_contradiction2}) in Section~\ref{sec:non_0}. Since the function $\overline{k} \mapsto J_{\eps, H, h}(\overline{k})$ is continuous on $(0,\infty)$, this shows that $\overline{k}^\infty$ is a minimizer of~\eqref{eq:optim_J}.
\end{itemize}

\subsubsection{Bounds on $\overline{u}^H_{\overline{k}^n, k_\eps}$, $\widecheck{u}^h_{\eps,\overline{k}^n, k_\eps}$ and $\psi^H_{\overline{k}^n, k_\eps}$} \label{sec:bounds}

\paragraph{Bound on $\overline{u}^H_{\overline{k}^n, k_\eps}$.}

The optimization problem~\eqref{eq:optim_J} has been designed so that the homogenized coefficient $k^\star$ is an admissible test coefficient in~\eqref{eq:optim_J}. Hence, by definition of $I_{\eps, H, h}$, we have in particular
\begin{equation}\label{eq:u_bound_eps}
  I_{\eps, H, h} \leq J_{\eps, H, h}(k^\star) = \int_{D \cup D_c} \Big| \nabla \overline{u}^H_{k^\star, k_\eps} - e_1 \Big|^2,
\end{equation}
where $\left(\overline{u}^H_{k^\star, k_\eps},\widecheck{u}^h_{\eps,k^\star, k_\eps}, \psi^H_{k^\star, k_\eps}\right)$ is the solution to~\eqref{eq:arlequin} with the constant coefficient $\overline{k} = k^\star$. Since we know that $\left( \overline{u}^H_{k^\star, k_\eps}, \widecheck{u}^h_{\eps, k^\star, k_\eps}\right)$ is the minimizer of~\eqref{eq:pb_min_H_enriched} with $\overline{k} = k^\star$, we can compare its energy with that of the particular choice $\left(\overline{u}^H(x) = x_1, \widecheck{u}^h_\eps(x) = x_1\right)$. Writing that
$$
{\cal E}\left( \overline{u}^H_{k^\star, k_\eps}, \widecheck{u}^h_{\eps, k^\star, k_\eps}\right) \leq {\cal E}\left(\overline{u}^H(x) = x_1, \widecheck{u}^h_\eps(x) = x_1\right),
$$
we obtain
\begin{multline} \label{eq:vraiment_energie}
  \int_D k^\star \left| \nabla \overline{u}^H_{k^\star, k_\eps} \right|^2 + \frac{1}{2} \int_{D_c} \Big( k^\star \left|\nabla \overline{u}^H_{k^\star, k_\eps} \right|^2 + k_\eps \nabla \widecheck{u}^h_{\eps, k^\star, k_\eps} \cdot \nabla \widecheck{u}^h_{\eps, k^\star, k_\eps} \Big) \\ + \int_{D_f} k_\eps \nabla \widecheck{u}^h_{\eps, k^\star, k_\eps} \cdot \nabla \widecheck{u}^h_{\eps, k^\star, k_\eps} \leq k^\star \, | D | + \frac{k^\star}{2} \, | D_c | + \frac{1}{2} \int_{D_c} k_\eps e_1 \cdot e_1 + \int_{D_f} k_\eps e_1 \cdot e_1.
\end{multline}
Using that $k_\eps$ is uniformly bounded and coercive, we obtain 
\begin{equation}\label{eq:u_k_bound}
  \int_D \, |\nabla \overline{u}^H_{k^\star, k_\eps}|^2 + \frac{1}{2} \int_{D_c} |\nabla \overline{u}^H_{k^\star, k_\eps}|^2 \leq C,
\end{equation}
for some constant $C$ independent of $\eps$, $H$ and $h$ (this independence with respect to $h$ and $\eps$ is important since we will later on use this bound and the subsequent ones in the regime $h, \eps \to 0$).

Collecting~\eqref{eq:quasiminimizers}, \eqref{eq:u_bound_eps} and~\eqref{eq:u_k_bound} and using the boundary conditions on $\Gamma$ for $\overline{u}^H_{\overline{k}^n, k_\eps}$, we immediately obtain that the sequence $\overline{u}^H_{\overline{k}^n, k_\eps}$ is bounded in $H^1(D \cup D_c)$: there exists some constant $C$ independent of $n$, $\eps$, $H$ and $h$ such that
\begin{equation}\label{eq:u_k^n_bound}
  \| \overline{u}^H_{\overline{k}^n, k_\eps} \|_{H^1(D \cup D_c)} \leq C.
\end{equation}
Since $\overline{u}^H_{\overline{k}^n, k_\eps}$ belongs to the finite dimensional space $V_H^{\rm Dir BC}$, we deduce that there exists a subsequence, which we still denote by $\overline{u}^H_{\overline{k}^n, k_\eps}$, that converges in $V_H$ (when $n \to \infty$) to some $\overline{u}^H_{\infty,k_\eps} \in V_H^{\rm Dir BC}$.

\paragraph{Bound on $\widecheck{u}^h_{\eps,\overline{k}^n, k_\eps}$.}

To bound this function, we first extend the function $\overline{u}^H_{\overline{k}^n, k_\eps}$ (which is defined in $D \cup D_c$) inside the domain $D_f$, in order to build an appropriate test function for the second line of~\eqref{eq:arlequin_n}. There are several ways to perform this extension and we have chosen to proceed as follows. We first build a function $\overline{u}^H_{f, \overline{k}^n} \in H^1(D_f)$ that satisfies $\left( \overline{u}^H_{f, \overline{k}^n} \right) \big|_{D_f} = \left( \overline{u}^H_{\overline{k}^n, k_\eps} \right) \big|_{D_c}$ on the boundary $\Gamma_f$ and $\| \overline{u}^H_{f, \overline{k}^n} \|_{H^1(D_f)} \leq C \, \| \overline{u}^H_{\overline{k}^n, k_\eps} \|_{H^{1/2}(\Gamma_f)}$ for some $C$ independent of $n$, $\eps$, $H$ and $h$. For instance, we can define $\overline{u}^H_{f, \overline{k}^n}$ as the harmonic extension of $\overline{u}^H_{\overline{k}^n, k_\eps} \big|_{\Gamma_f}$ in $D_f$. 

We now pass from $\overline{u}^H_{f,\overline{k}^n}$ to a piecewise affine function $\widetilde{u}^H_{\overline{k}^n}$ belonging to the space
\begin{equation} \label{eq:def_V_tilde_h}
\widetilde{V}_h = \left\{ u^h \in H^1(D_f), \quad \text{$u^h$ is piecewise affine on the fine mesh ${\cal T}_h$} \right\}
\end{equation}
using a Scott-Zhang type interpolation, which has the advantage of being defined for functions that are not necessarily continuous (in contrast to nodal interpolation) and of preserving boundary conditions (in contrast to Clément interpolation). More precisely, using~\cite{scott_zhang90} (see also~\cite[Theorem~3.4]{melenk2003hp}), we know that there exists a linear and continuous operator $I^{\rm SZ} : H^1(D_f) \to \widetilde{V}_h$ such that, if $v \in H^1(D_f)$ is continuous and piecewise affine on $\partial D_f$, then $I^{\rm SZ} v = v$ on $\partial D_f$. We hence set
\begin{equation}\label{eq:u_help}
  \widetilde{u}^H_{\overline{k}^n} = I^{\rm SZ} \, \overline{u}^H_{f,\overline{k}^n},
\end{equation}
which satisfies $\widetilde{u}^H_{\overline{k}^n} \in \widetilde{V}_h$, $\widetilde{u}^H_{\overline{k}^n} = \overline{u}^H_{f,\overline{k}^n}$ on $\Gamma_f$ and $\| \widetilde{u}^H_{\overline{k}^n} \|_{H^1(D_f)} \leq C \, \| \overline{u}_{f,\overline{k}^n}^H \|_{H^1(D_f)}$ for some $C$ independent of $n$, $\eps$, $H$ and $h$ (it actually only depends on $D_f$).

\medskip

We hence have built some $\widetilde{u}^H_{\overline{k}^n} \in \widetilde{V}_h$ satisfying $\widetilde{u}^H_{\overline{k}^n} = \overline{u}^H_{\overline{k}^n, k_\eps}$ on $\Gamma_f$ and such that
\begin{equation}\label{eq:covid1}
\| \widetilde{u}^H_{\overline{k}^n} \|_{H^1(D_f)} \leq C \, \| \overline{u}_{f,\overline{k}^n}^H \|_{H^1(D_f)} \leq C \, \| \overline{u}^H_{\overline{k}^n, k_\eps} \|_{H^{1/2}(\Gamma_f)} \leq C \, \| \overline{u}^H_{\overline{k}^n, k_\eps} \|_{H^1(D_c)},
\end{equation}
for some $C$ independent of $n$, $\eps$, $H$ and $h$. We next introduce the extension of the function $\overline{u}^H_{\overline{k}^n, k_\eps}$ inside the domain $D_f$ defined by
$$
  \widetilde{u}^h_{\overline{k}^n} = 
  \begin{cases}
    \overline{u}^H_{\overline{k}^n, k_\eps} \text{ in $D_c$},
    \\ \noalign{\vskip 3pt}
    \widetilde{u}^H_{\overline{k}^n} \text{ in $D_f$}.
  \end{cases}
$$
Note that $\widetilde{u}^h_{\overline{k}^n}\in V_h$, since the fine mesh ${\cal T}_h$ is assumed to be a submesh of the coarse mesh ${\cal T}_H$ in $D_c$. In addition, using~\eqref{eq:covid1} and~\eqref{eq:u_k^n_bound}, we observe that
\begin{equation} \label{eq:covid2}
  \| \widetilde{u}^h_{\overline{k}^n} \|_{H^1(D_c \cup D_f)} \leq C,
\end{equation}
for some $C$ independent of $n$, $\eps$, $H$ and $h$. 

\medskip

We are now in position to bound the sequence $\widecheck{u}_{\eps,\overline{k}^n,k_\eps}^h$. Taking $\widecheck{v}^h = \widecheck{u}_{\eps,\overline{k}^n, k_\eps}^h - \widetilde{u}^h_{\overline{k}^n}$ in the second line of~\eqref{eq:arlequin_n} (which is a possible choice since both $\widecheck{u}_{\eps,\overline{k}^n, k_\eps}^h$ and $\widetilde{u}^h_{\overline{k}^n}$ belong to $V_h$), we have 
\begin{multline}\label{eq:third_line}
  \frac{1}{2} \int_{D_c} k_\eps \nabla \widecheck{u}_{\eps,\overline{k}^n, k_\eps}^h \cdot \nabla \left( \widecheck{u}_{\eps,\overline{k}^n, k_\eps}^h - \widetilde{u}^h_{\overline{k}^n} \right) + \int_{D_f} k_\eps \nabla \widecheck{u}_{\eps,\overline{k}^n, k_\eps}^h \cdot \nabla \left( \widecheck{u}_{\eps,\overline{k}^n, k_\eps}^h - \widetilde{u}^h_{\overline{k}^n} \right) \\ - \int_{D_c} \nabla \psi^H_{\overline{k}^n, k_\eps} \cdot \nabla \left( \widecheck{u}_{\eps,\overline{k}^n, k_\eps}^h - \widetilde{u}^h_{\overline{k}^n} \right) - \int_{D_c} \psi^H_{\overline{k}^n, k_\eps} \left( \widecheck{u}_{\eps,\overline{k}^n, k_\eps}^h - \widetilde{u}^h_{\overline{k}^n} \right) = 0.
\end{multline}
Using the third line of~\eqref{eq:arlequin_n} with $\phi^H = \psi^H_{\overline{k}^n, k_\eps}$ and recalling that $\widetilde{u}^h_{\overline{k}^n} = \overline{u}^H_{\overline{k}^n, k_\eps}$ in $D_c$, we see that the sum of the last two terms in~\eqref{eq:third_line} vanishes. We next use the Cauchy-Schwarz inequality and obtain
\begin{align*}
  & \frac{1}{2} \int_{D_c} k_\eps \, \nabla \widecheck{u}_{\eps,\overline{k}^n, k_\eps}^h \cdot \nabla \widecheck{u}_{\eps,\overline{k}^n, k_\eps}^h + \int_{D_f} k_\eps \, \nabla \widecheck{u}_{\eps,\overline{k}^n, k_\eps}^h \cdot \nabla \widecheck{u}_{\eps,\overline{k}^n, k_\eps}^h
  \\
  & =
  \frac{1}{2} \int_{D_c} k_\eps \, \nabla \widecheck{u}_{\eps,\overline{k}^n, k_\eps}^h \cdot \nabla \widetilde{u}^h_{\overline{k}^n} + \int_{D_f} k_\eps \, \nabla \widecheck{u}_{\eps,\overline{k}^n, k_\eps}^h \cdot \nabla \widetilde{u}^h_{\overline{k}^n}
  \\
  & \leq
  \frac{1}{2} \| k_\eps \|_{L^\infty(D_c)} \| \nabla \widecheck{u}_{\eps,\overline{k}^n, k_\eps}^h \|_{L^2(D_c)} \| \nabla \widetilde{u}^h_{\overline{k}^n} \|_{L^2(D_c)}
  \\
  & \quad
  + \| k_\eps \|_{L^\infty(D_f)} \| \nabla \widecheck{u}_{\eps,\overline{k}^n, k_\eps}^h \|_{L^2(D_f)} \| \nabla \widetilde{u}^h_{\overline{k}^n} \|_{L^2(D_f)}.
\end{align*}
Using that the oscillating coefficient $k_\eps$ is bounded and bounded away from zero (see~\eqref{eq:boundedness+coercivity}) and that $\widetilde{u}^h_{\overline{k}^n}$ is bounded in $H^1(D_c \cup D_f)$ (see~\eqref{eq:covid2}), we obtain that there exists a constant $C$ independent of $n$, $\eps$, $H$ and $h$ such that
\begin{equation} \label{eq:bound_u_eps}
  \forall n \in \NN, \quad \| \nabla \widecheck{u}_{\eps,\overline{k}^n, k_\eps}^h \|_{L^2(D_c \cup D_f)} \leq C.
\end{equation}
Testing the third line of~\eqref{eq:arlequin_n} with $\phi^H = 1$ (which indeed belongs to $W_{H,h}^{\rm enrich}$), we obtain that 
$$
\int_{D_c} \widecheck{u}_{\eps,\overline{k}^n, k_\eps}^h = \int_{D_c} \overline{u}_{\overline{k}^n, k_\eps}^H,
$$
and hence, using~\eqref{eq:u_k^n_bound}, we obtain that $\dps \int_{D_c} \widecheck{u}_{\eps,\overline{k}^n,k_\eps}^h$ is bounded. Thus, by the Poincar\'e-Wirtinger inequality, we deduce from~\eqref{eq:bound_u_eps} that the sequence $\widecheck{u}^h_{\eps,\overline{k}^n, k_\eps}$ is bounded in $H^1(D_c \cup D_f)$, independently of $n$, $\eps$, $H$ and $h$. Since $\widecheck{u}^h_{\eps, \overline{k}^n, k_\eps}$ belongs to the finite dimensional space $V_h$, we deduce that there exists a subsequence, which we still denote by $\widecheck{u}^h_{\eps, \overline{k}^n, k_\eps}$, that converges in $V_h$ (when $n \to \infty$) to some $\widecheck{u}^h_{\eps, \infty, k_\eps} \in V_h$ (in the particular case when $h=0$, the convergence is strong in $L^2(D_c \cup D_f)$ and weak in $H^1(D_c \cup D_f)$).

\paragraph{Bound on $\psi^H_{\overline{k}^n, k_\eps}$.}

We are now left with showing that the sequence of Lagrange multipliers is also bounded. To that aim, we proceed as above and first extend $\psi^H_{\overline{k}^n, k_\eps}$ (which is defined in $D_c$) inside the domain $D_f$, in order to again build an appropriate test function for the second line of~\eqref{eq:arlequin_n}. This extension is built following the same steps as above (see~\eqref{eq:u_help} and~\eqref{eq:covid1}), which thus allow to introduce some $\widetilde{\psi}^H_{\overline{k}^n} \in \widetilde{V}_h$ (where $\widetilde{V}_h \subset H^1(D_f)$ is defined by~\eqref{eq:def_V_tilde_h}) satisfying $\widetilde{\psi}^H_{\overline{k}^n} = \psi^H_{\overline{k}^n, k_\eps}$ on $\Gamma_f$ and such that
\begin{equation}\label{eq:covid3}
\| \widetilde{\psi}^H_{\overline{k}^n} \|_{H^1(D_f)} \leq C \, \| \psi^H_{\overline{k}^n, k_\eps} \|_{H^1(D_c)},
\end{equation}
for some $C$ independent of $n$, $\eps$, $H$ and $h$.

We next extend $\psi^H_{\overline{k}^n, k_\eps}$ inside the domain $D_f$ by introducing 
\begin{equation}\label{eq:tilde_psi}
  \widetilde{\psi}^h_{\overline{k}^n} = 
  \begin{cases}
    \psi^H_{\overline{k}^n, k_\eps} \text{ in $D_c$},
    \\ \noalign{\vskip 3pt}
    \widetilde{\psi}^H_{\overline{k}^n} \text{ in $D_f$}. 
  \end{cases}
\end{equation}
We note that $\widetilde{\psi}^h_{\overline{k}^n} \in H^1(D_c \cup D_f)$. In addition, both $\psi^H_{\overline{k}^n, k_\eps}$ and $\widetilde{\psi}^H_{\overline{k}^n}$ are piecewise affine functions on the fine mesh ${\cal T}_h$ (for $\psi^H_{\overline{k}^n, k_\eps}$, this is a consequence of the fact that the fine mesh ${\cal T}_h$ is assumed to be a submesh of the coarse mesh ${\cal T}_H$ in $D_c$ and of the specific approximation $\psi_0^h$, defined by~\eqref{eq:psi_0_FV_h}, of the Lagrange multiplier $\psi_0$). We hence deduce that $\widetilde{\psi}^h_{\overline{k}^n} \in V_h$.

Considering the test function $\widecheck{v}^h = \widetilde{\psi}^h_{\overline{k}^n}$ in the second line of~\eqref{eq:arlequin_n}, we obtain
\begin{multline*}
  \frac{1}{2} \int_{D_c} k_\eps \nabla \widecheck{u}_{\eps,\overline{k}^n, k_\eps}^h \cdot \nabla \widetilde{\psi}^h_{\overline{k}^n} + \int_{D_f} k_\eps \nabla \widecheck{u}_{\eps,\overline{k}^n, k_\eps}^h \cdot \nabla\widetilde{\psi}^h_{\overline{k}^n} \\ - \int_{D_c} \nabla \psi^H_{\overline{k}^n, k_\eps} \cdot \nabla\widetilde{\psi}^h_{\overline{k}^n} - \int_{D_c} \psi^H_{\overline{k}^n, k_\eps} \widetilde{\psi}^h_{\overline{k}^n} = 0.
\end{multline*}
Since $\widetilde{\psi}^h_{\overline{k}^n} = \psi^H_{\overline{k}^n, k_\eps}$ in $D_c$, we deduce that
\begin{multline} \label{eq:psi_bound}
  \| \psi^H_{\overline{k}^n, k_\eps} \|_{H^1(D_c)}^2 \leq \frac{1}{2} \| k_\eps \|_{L^\infty(D_c)} \| \nabla \widecheck{u}_{\eps,\overline{k}^n, k_\eps}^h \|_{L^2(D_c)} \| \nabla \psi^H_{\overline{k}^n, k_\eps} \|_{L^2(D_c)} \\ + \| k_\eps \|_{L^\infty(D_f)} \| \nabla \widecheck{u}_{\eps,\overline{k}^n, k_\eps}^h \|_{L^2(D_f)} \| \nabla \widetilde{\psi}^H_{\overline{k}^n} \|_{L^2(D_f)}.
\end{multline}
Using that $k_\eps$ is uniformly bounded and the bounds~\eqref{eq:covid3} and~\eqref{eq:bound_u_eps}, we infer from~\eqref{eq:psi_bound} that there exists a constant $C$ independent of $n$, $\eps$, $H$ and $h$ such that
$$
\forall n \in\NN, \quad \| \psi^H_{\overline{k}^n, k_\eps} \|_{H^1(D_c)} \leq C.
$$
Since $\psi^H_{\overline{k}^n, k_\eps}$ belongs to the finite dimensional space $W_{H,h}^{\rm enrich}$, we deduce that there exists a subsequence, which we still denote by $\psi^H_{\overline{k}^n, k_\eps}$, that converges in $W_{H,h}^{\rm enrich}$ (when $n \to \infty$) to some $\overline{\psi}^H_{\infty,k_\eps} \in W_{H,h}^{\rm enrich}$.

\subsubsection{Convergence of the minimizing sequence $\overline{k}^n$} \label{sec:k_n}

We now show that the minimizing sequence $\overline{k}^n$ converges to some limit $\overline{k}^\infty$. We introduce $\widetilde{u}_0^H \in V_H^{\rm Dir BC}$ such that 
\begin{equation}\label{eq:contradiction}
  \forall v^H \in V^0_H, \quad \int_D \nabla \widetilde{u}_0^H \cdot \nabla v^H + \frac{1}{2} \int_{D_c} \nabla \widetilde{u}_0^H \cdot \nabla v^H = 0.
\end{equation}
Taking $\overline{v}^H = \overline{u}^H_{\overline{k}^n, k_\eps} -\widetilde{u}_0^H \in V^0_H$ in the first line of~\eqref{eq:arlequin_n}, we have 
\begin{multline*}
  \int_D \overline{k}^n \nabla \overline{u}^H_{\overline{k}^n, k_\eps} \cdot \nabla \left( \overline{u}^H_{\overline{k}^n, k_\eps} - \widetilde{u}_0^H \right) + \frac{1}{2} \int_{D_c} \overline{k}^n \nabla \overline{u}^H_{\overline{k}^n, k_\eps} \cdot \nabla \left( \overline{u}^H_{\overline{k}^n, k_\eps} - \widetilde{u}_0^H \right)
  \\
  + \int_{D_c} \nabla \psi^H_{\overline{k}^n, k_\eps} \cdot \nabla \left( \overline{u}^H_{\overline{k}^n, k_\eps} - \widetilde{u}_0^H \right) + \int_{D_c} \psi^H_{\overline{k}^n, k_\eps} \left( \overline{u}^H_{\overline{k}^n, k_\eps} - \widetilde{u}_0^H \right) = 0.
\end{multline*}
Thus, from the definition of $\widetilde{u}_0^H$ and the fact that $\overline{k}^n$ is a scalar, we obtain
\begin{multline}\label{eq:k_infty}
  \overline{k}^n \left\| \nabla \left( \overline{u}^H_{\overline{k}^n, k_\eps} - \widetilde{u}_0^H \right) \right\|^2_{L^2(D)} + \frac{1}{2} \overline{k}^n \left\| \nabla \left( \overline{u}^H_{\overline{k}^n, k_\eps} - \widetilde{u}_0^H \right) \right\|^2_{L^2(D_c)}
  \\
  + \int_{D_c} \nabla \psi^H_{\overline{k}^n, k_\eps} \cdot \nabla \left( \overline{u}^H_{\overline{k}^n, k_\eps} - \widetilde{u}_0^H \right) + \int_{D_c} \psi^H_{\overline{k}^n, k_\eps} \left( \overline{u}^H_{\overline{k}^n, k_\eps} - \widetilde{u}_0^H \right) = 0.    
\end{multline}
All the terms in~\eqref{eq:k_infty} converge when $n \to \infty$, {\em except} possibly $\overline{k}^n$. The only case when we cannot deduce from~\eqref{eq:k_infty} that $\overline{k}^n$ converges is that when the limit $\overline{u}^H_{\infty,k_\eps}$ of $\overline{u}^H_{\overline{k}^n, k_\eps}$ identically satisfies $\nabla \overline{u}^H_{\infty,k_\eps} = \nabla \widetilde{u}_0^H$ in $D \cup D_c$. Since $\overline{u}_{\infty,k_\eps}^H(x) = \widetilde{u}_0^H(x) = x_1$ on $\Gamma$, this would imply that $\overline{u}^H_{\infty,k_\eps} = \widetilde{u}_0^H$ in $D \cup D_c$. Passing to the limit $n \to \infty$ in~\eqref{eq:quasiminimizers} yields 
$$
I_{\eps, H, h} = \int_{D \cup D_c} \left| \nabla \overline{u}_{\infty,k_\eps}^H - e_1 \right|^2.
$$
We are left with showing the condition 
\begin{equation}\label{eq:J_contradiction}
  I_{\eps, H, h} < \int_{D \cup D_c} \left| \nabla \widetilde{u}_0^H - e_1 \right|^2,
\end{equation}
if we want to rule out this case and conclude that $\overline{k}^n$ converges up to an extraction (to some coefficient that we denote $\overline{k}^\infty$). We recall that $\widetilde{u}_0^H \in V_H^{\rm Dir BC}$ in~\eqref{eq:J_contradiction} is defined by~\eqref{eq:contradiction}.

\medskip

We note that the right-hand side of~\eqref{eq:J_contradiction} is positive (and of course independent of $\eps$ and $h$ by construction). Indeed, if $\dps \int_{D \cup D_c} \left| \nabla \widetilde{u}^H_0 - e_1 \right|^2$ were vanishing, we would have $\widetilde{u}^H_0(x) = x_1$ on $D \cup D_c$ (recall $\widetilde{u}^H_0(x) = x_1$ on $\Gamma$), which however does not satisfy~\eqref{eq:contradiction}.

\medskip

Investigating whether~\eqref{eq:J_contradiction} holds in full generality is delicate, and this is why we have {\em assumed} this condition in Theorem~\ref{th:optimization}. It can be investigated {\em numerically}. There are also a few situations where~\eqref{eq:J_contradiction} can be established {\em mathematically}. One such case is when we suppose that the fine mesh parameter $h$ and the oscillating parameter $\eps$ are sufficiently small. We indeed claim that 
\begin{equation}\label{eq:limit_contradiction}
  \lim_{\eps \to 0} \lim_{h \to 0} J_{\eps, H, h}(k^\star) = 0,
\end{equation}
where $J_{\eps, H, h}$ is defined by~\eqref{eq:def_J}, which obviously implies 
\begin{equation}\label{eq:limit_contradiction_I}
  \lim_{\eps \to 0} \lim_{h \to 0} I_{\eps, H, h} = 0,
\end{equation}
and thus~\eqref{eq:J_contradiction} (in the regime $h \ll \eps \ll 1$) since we have pointed out above that the right-hand side of~\eqref{eq:J_contradiction} is positive and independent of $\eps$ and $h$.

\medskip

In order to prove~\eqref{eq:limit_contradiction}, we consider~\eqref{eq:arlequin} with $\overline{k} = k^\star$, the solution of which is denoted $(\overline{u}^{H,h}_{k^\star,k_\eps},\widecheck{u}^h_{\eps,k^\star,k_\eps},\psi^{H,h}_{k^\star,k_\eps})$ (where we have on purpose made explicit the dependency of the three components of the solution with respect to $h$). Using standard finite element arguments, we can pass to the limit $h \to 0$. We thus have $\dps (\overline{u}^{H,h}_{k^\star,k_\eps},\widecheck{u}^h_{\eps,k^\star,k_\eps},\psi^{H,h}_{k^\star,k_\eps}) \xrightarrow[h \to 0]{} (\overline{u}^H_{k^\star,k_\eps},\widecheck{u}_{\eps,k^\star,k_\eps},\psi^H_{k^\star,k_\eps})$ in $H^1(D \cup D_c) \times H^1(D_c \cup D_f) \times H^1(D_c)$, where $(\overline{u}^H_{k^\star,k_\eps},\widecheck{u}_{\eps,k^\star,k_\eps},\psi^H_{k^\star,k_\eps})$ is the solution to~\eqref{eq:system_k_star} with $\overline{k} = k^\star$. Furthermore, $(\overline{u}^H_{k^\star,k_\eps},\widecheck{u}_{\eps,k^\star,k_\eps},\psi^H_{k^\star,k_\eps})$ is bounded in $H^1(D \cup D_c) \times H^1(D_c \cup D_f) \times H^1(D_c)$ by a constant independent of $\eps$ and $H$. This bound on $\overline{u}^H_{k^\star,k_\eps}$ has indeed been shown above (see~\eqref{eq:u_k_bound}), and it implies a bound on $\widecheck{u}_{\eps,k^\star,k_\eps}$ and $\psi^H_{k^\star,k_\eps}$ using the same arguments as those used in Section~\ref{sec:bounds}. 

We now refer to Lemma~\ref{th:multiplication} for $\overline{k} = k^\star$ (the bounds that we have just discussed obviously implying the convergences stated as assumptions in that lemma) and obtain that, when $\eps\to 0$, the solution $(\overline{u}^H_{k^\star,k_\eps},\widecheck{u}_{\eps,k^\star,k_\eps},\psi^H_{k^\star,k_\eps})$ to~\eqref{eq:system_k_star} converges to $(\overline{u}^H_{k^\star,k^\star},\widecheck{u}_{k^\star,k^\star},\psi^H_{k^\star,k^\star})$, solution to~\eqref{eq:system_contradiction} with $\overline{k} = k^\star$. We eventually note (as stated in the first assertion of Lemma~\ref{lemma:optimization}) that the unique solution to the system~\eqref{eq:system_contradiction} with $\overline{k} = k^\star$ is $\overline{u}^H_{k^\star,k^\star}(x) = x_1$, $\widecheck{u}_{k^\star,k^\star}(x) = x_1$ and $\psi^H_{k^\star,k^\star} = k^\star \, \psi_0$.

We have therefore shown that $\overline{u}^{H,h}_{k^\star,k_\eps}$ converges (when $h \to 0$ and $\eps \to 0$) to $\overline{u}^H_{k^\star,k^\star}(x) = x_1$, strongly in $H^1(D \cup D_c)$ since $\overline{u}^{H,h}_{k^\star,k_\eps}$ belongs to the finite dimensional space $V_H^{\rm Dir BC}$. We have therefore obtained that 
$$
\lim_{\eps \to 0} \lim_{h \to 0} J_{\eps, H, h}(k^\star) = \int_{D \cup D_c} \big| \nabla \overline{u}^H_{k^\star,k^\star} - e_1 \big|^2 = 0,
$$
that is exactly~\eqref{eq:limit_contradiction}. 

\subsubsection{Existence of an optimal coefficient} \label{sec:non_0}

We have shown above that, under assumption~\eqref{eq:J_contradiction}, the minimizing sequence $\overline{k}^n \in (0,\infty)$ converges (up to a subsequence extraction) to some $\overline{k}^\infty \in [0,\infty)$. We now show that $\overline{k}^\infty \neq 0$, provided we impose an additional condition (see~\eqref{eq:J_contradiction2} below) to the problem. 

To state that additional condition, we consider~\eqref{eq:arlequin} and we formally set $\overline{k} = 0$. We hence look for $\overline{u}^H_{0,k_\eps} \in V_H^{\rm Dir BC}$, $\widecheck{u}^h_{\eps,0,k_\eps} \in V_h$ and $\psi^H_{0,k_\eps} \in W_{H,h}^{\rm enrich}$ such that
\begin{equation} \label{eq:contradiction_arlequin}
  \begin{cases}
  \forall \overline{v}^H \in V_H^0, \quad & {\cal C}(\overline{v}^H,\psi^H_{0,k_\eps}) = 0,
  \\ \noalign{\vskip 3pt}
  \forall \widecheck{v}^h \in V_h, \quad & \widecheck{A}_{k_\eps}(\widecheck{u}^h_{\eps,0,k_\eps},\widecheck{v}^h) - {\cal C}(\widecheck{v}^h,\psi_{0,k_\eps}^H) = 0,
  \\ \noalign{\vskip 3pt}
  \forall \phi^H \in W_{H,h}^{\rm enrich}, \quad & {\cal C}(\overline{u}^H_{0,k_\eps} - \widecheck{u}^h_{\eps,0,k_\eps},\phi^H) = 0,
  \end{cases}
\end{equation}
which is equivalent to solving the minimization problem
\begin{equation} \label{eq:pb_min_0}
  \inf \left\{ \begin{array}{c} {\cal E}_0(\widecheck{u}^h_\eps), \quad \overline{u}^H \in V_H, \quad \overline{u}^H(x) = x_1 \ \text{on $\Gamma$}, \\ \noalign{\vskip 2pt} \widecheck{u}^h_\eps \in V_h, \qquad {\cal C}(\overline{u}^H-\widecheck{u}^h_\eps,\phi^H) = 0 \ \ \text{for any $\phi^H \in W_{H,h}^{\rm enrich}$} \end{array} \right\},
\end{equation}
where the constraint function ${\cal C}$ is defined by~\eqref{eq:def_C} and where the energy ${\cal E}_0$ is obtained from the energy ${\cal E}$ defined in~\eqref{eq:def_E} by formally setting $\overline{k} = 0$:
\begin{equation}\label{eq:def_E0}
  {\cal E}_0(\widecheck{u}_\eps) = \frac{1}{2} \int_{D_f} k_\eps(x) \, \nabla \widecheck{u}_\eps(x) \cdot \nabla \widecheck{u}_\eps(x) + \frac{1}{4} \int_{D_c} k_\eps(x) \, \nabla \widecheck{u}_\eps(x) \cdot \nabla \widecheck{u}_\eps(x).
\end{equation}
Note that ${\cal E}_0$ does not depend on $\overline{u}^H$.

The minimizers of~\eqref{eq:pb_min_0} are simple to characterize: they satisfy $\widecheck{u}^h_{\eps,0,k_\eps} = \lambda$ in $D_c \cup D_f$ for some constant $\lambda$ (which indeed minimizes the energy~\eqref{eq:def_E0}) and $\overline{u}^H_{0,k_\eps} = \lambda$ in $D_c$ (which is obtained by considering $\phi^H = (\overline{u}^H_{0,k_\eps} - \lambda) \big|_{D_c}$ in the constraint). The value of $\overline{u}^H_{0,k_\eps}$ in $D$ is free, besides the fact that it should satisfy the boundary condition $\overline{u}^H_{0,k_\eps}(x) = x_1$ on $\Gamma$ and the trace condition $\overline{u}^H_{0,k_\eps} = \lambda$ on $\Gamma_c$.

It is next easy to see that, for any solution to~\eqref{eq:contradiction_arlequin}, we have
\begin{align*}
  \int_{D \cup D_c} \left| \nabla \overline{u}_{0,k_\eps}^H - e_1 \right|^2
  &=
  \int_D \left| \nabla \overline{u}_{0,k_\eps}^H - e_1 \right|^2 + \int_{D_c} \left| \nabla \overline{u}_{0,k_\eps}^H - e_1 \right|^2
  \\
  &=
  \int_D \left| \nabla \overline{u}_{0,k_\eps}^H - e_1 \right|^2 + |D_c|.
\end{align*}
We are now going to compute the minimum of the above quantity over all solutions to~\eqref{eq:contradiction_arlequin}.

Let us introduce the unique function $\widetilde{u}_{0,a}^H \in V_H^{\rm Dir BC}$ satisfying $\widetilde{u}^H_{0,a} = 0$ in $D_c$ and 
\begin{equation}\label{eq:covid4a}
  \forall v^H \in \widetilde{V}_H, \quad \int_D \left( \nabla \widetilde{u}_{0,a}^H - e_1 \right) \cdot \nabla v^H = 0,
\end{equation}
where
$$
\widetilde{V}_H = \left\{ v \in V^0_H, \quad \text{$v=0$ on $D_c$} \right\}.
$$
Let us also introduce the unique function $\widetilde{u}_{0,b}^H \in V_H^0$ satisfying $\widetilde{u}^H_{0,b} = 1$ in $D_c$ and 
\begin{equation}\label{eq:covid4b}
  \forall v^H \in \widetilde{V}_H, \quad \int_D \left( \nabla \widetilde{u}_{0,b}^H - e_1 \right) \cdot \nabla v^H = 0.
\end{equation}
We then have
$$
\underset{\small \begin{array}{c} \text{solutions to~\eqref{eq:contradiction_arlequin}} \\ \text{which are equal to $\lambda$ on $D_c$} \end{array}}{\inf} \int_D \left| \nabla \overline{u}_{0,k_\eps}^H - e_1 \right|^2 \\ = \int_D \left| \nabla \widetilde{u}_{0,a}^H + \lambda \, \nabla \widetilde{u}_{0,b}^H - e_1 \right|^2.
$$
Next, by minimizing with respect to $\lambda$, we obtain
\begin{equation}\label{eq:covid5}
  \inf_{\text{solutions to~\eqref{eq:contradiction_arlequin}}} \int_{D \cup D_c} \left| \nabla \overline{u}_{0,k_\eps}^H - e_1 \right|^2 = \widetilde{I}_{0,H},
\end{equation}
where $\widetilde{I}_{0,H}$ is defined in terms of the solution $\widetilde{u}_{0,a}^H$ to~\eqref{eq:covid4a} and $\widetilde{u}_{0,b}^H$ to~\eqref{eq:covid4b} by
$$
\widetilde{I}_{0,H} = |D_c| + \int_D \left| \nabla \widetilde{u}_{0,a}^H - e_1 \right|^2 - \frac{\left( \int_D \nabla \widetilde{u}_{0,b}^H \cdot \left( \nabla \widetilde{u}_{0,a}^H - e_1 \right) \right)^2}{\int_D \left| \nabla \widetilde{u}_{0,b}^H \right|^2}.
$$
Note that the sum of the last two terms of $\widetilde{I}_{0,H}$ is non-negative, in view of the Cauchy-Schwarz inequality, and hence $\widetilde{I}_{0,H} \geq |D_c| > 0$.

\medskip

We now assume that 
\begin{equation}\label{eq:J_contradiction2}
  I_{\eps, H, h} < \widetilde{I}_{0,H}.
\end{equation}
Under that assumption, we claim that the limit $\overline{k}^\infty$ of the minimizing sequence satisfies $\overline{k}^\infty \neq 0$. We argue by contradiction and assume that $\overline{k}^\infty = 0$. Taking the limit $n \to \infty$ in~\eqref{eq:arlequin_n}, we thus have that the limit $(\overline{u}^H_{\infty, k_\eps},\widecheck{u}^h_{\eps,\infty, k_\eps}, \psi^H_{\infty, k_\eps})$ of $(\overline{u}^H_{\overline{k}^n, k_\eps},\widecheck{u}^h_{\eps,\overline{k}^n, k_\eps},\psi^H_{\overline{k}^n, k_\eps})$ is a solution to~\eqref{eq:contradiction_arlequin}. Passing to the limit $n \to \infty$ in~\eqref{eq:quasiminimizers} yields 
$$
I_{\eps, H, h} = \int_{D \cup D_c} \left| \nabla \overline{u}_{\infty,k_\eps}^H - e_1 \right|^2,
$$
which is in contradiction with~\eqref{eq:covid5} and~\eqref{eq:J_contradiction2}. This proves that $\overline{k}^\infty > 0$.

\medskip

One of the ways to establish~\eqref{eq:J_contradiction2} mathematically is to assume that the parameters $h$ and $\eps$ are sufficiently small. We indeed recall that, in that regime, the left-hand side of~\eqref{eq:J_contradiction2} converges to 0 (see~\eqref{eq:limit_contradiction_I}). In contrast, the right-hand side of~\eqref{eq:J_contradiction2} is independent of $h$ and $\eps$ and is positive.

\medskip

We have thus shown that, under assumptions~\eqref{eq:J_contradiction} and~\eqref{eq:J_contradiction2}, the minimizing sequence $\overline{k}^n$ converges (up to a subsequence extraction) to some $\overline{k}^\infty \in (0,\infty)$. Since the function $\overline{k} \mapsto J_{\eps, H, h}(\overline{k})$ is continuous on $(0,\infty)$, this shows that $\overline{k}^\infty$ is a minimizer of~\eqref{eq:optim_J}. We denote by $\overline{k}^{\rm opt}_\eps$ such an optimal coefficient, to emphasize its dependency with respect to $\eps$. This concludes the proof of Theorem~\ref{th:optimization}.

\subsection{Homogenized limit}\label{sec:homogenization}

For each $\eps>0$, and under assumptions~\eqref{eq:J_contradiction} and~\eqref{eq:J_contradiction2}, we know from Theorem~\ref{th:optimization} that there exists at least one optimal constant coefficient $\overline{k}^{\rm opt}_\eps$ minimizing~\eqref{eq:optim_J} with a corresponding solution $(\overline{u}^H_{\overline{k}^{\rm opt}_\eps, k_\eps},\widecheck{u}^h_{\eps,\overline{k}^{\rm opt}_\eps, k_\eps},\psi^H_{\overline{k}^{\rm opt}_\eps,k_\eps})$ to the system~\eqref{eq:arlequin} for $\overline{k} = \overline{k}^{\rm opt}_\eps$. We now aim at studying the limit of $\overline{k}^{\rm opt}_\eps$ when $\eps \to 0$. We recall that, in that limit, assumptions~\eqref{eq:J_contradiction} and~\eqref{eq:J_contradiction2} are satisfied. 

\medskip

We have assumed in~\eqref{eq:structure-k} that the sequence $k_\eps$ is such that $k_\eps = k_{\rm per}(\cdot/\eps)$ for some fixed periodic function $k_{\rm per}$. This periodicity assumption implies that the homogenized coefficient $k^\star$ exists and is constant (a fact that we have already used above, see e.g. the first arguments of Section~\ref{sec:bounds}). Our aim in this section is to show that the optimal coefficient $\overline{k}^{\rm opt}_\eps$ converges to the homogenized coefficient $k^\star$ when $\eps$ goes to $0$, as stated in the following theorem, which is our second main result. Although we perform our analysis in the periodic setting, we believe that it actually carries over to more general cases (random stationary setting, \dots).

\begin{theorem}\label{th:homogenization}
Let $k_\eps$ be given by~\eqref{eq:structure-k} for some fixed periodic coefficient $k_{\rm per}$ that satisfies the classical boundedness and coercivity conditions~\eqref{eq:boundedness+coercivity}. We make the regularity assumption~\eqref{eq:holder_continuity} and the geometric assumption~\eqref{eq:boundaries}. 

Then, any optimal coefficient $\overline{k}_\eps^{\rm opt}(H,h)$ (the existence of which is provided by Theorem~\ref{th:optimization}) converges to the homogenized coefficient $k^\star$ when $h$ and $\eps$ go to $0$: for any $H>0$, we have
\begin{equation} \label{eq:main_result2}
\lim_{\eps \to 0} \lim_{h \to 0} \overline{k}_\eps^{\rm opt}(H,h) = k^\star.
\end{equation}
\end{theorem}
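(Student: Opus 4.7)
}

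The strategy is to take a sequence of parameters realizing the iterated limit, extract subsequences of the associated optimal coefficients, rule out the degenerate limits $\overline{k}^{\rm opt}_\eps \to 0$ and $\overline{k}^{\rm opt}_\eps \to \infty$, and then identify any finite, nonzero accumulation point as $k^\star$ by combining the homogenization result of Corollary~\ref{coro:multiplication} with the rigidity statement of Lemma~\ref{lemma:optimization}. Concretely, fix $H>0$, pick any sequences $\eps_n \to 0$ and (per $n$) $h_{n} \to 0$ realizing the iterated limit, and set $\overline{k}_n := \overline{k}^{\rm opt}_{\eps_n}(H,h_n)$, which exists by Theorem~\ref{th:optimization} in this regime.

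The starting estimate comes from optimality and from~\eqref{eq:limit_contradiction_I}: since the admissible choice $\overline{k}=k^\star$ gives $J_{\eps_n,H,h_n}(k^\star)\to 0$, one has
$$
\int_{D \cup D_c}\big|\nabla \overline{u}^H_{\overline{k}_n,k_{\eps_n}}-e_1\big|^2 \;=\; J_{\eps_n,H,h_n}(\overline{k}_n)\;\le\;J_{\eps_n,H,h_n}(k^\star)\;\xrightarrow[n\to\infty]{}\;0,
$$
and, combining with the Dirichlet boundary condition and the Poincar\'e inequality, $\overline{u}^H_{\overline{k}_n,k_{\eps_n}}\to x_1$ strongly in $H^1(D\cup D_c)$.

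Next, I would show that $\overline{k}_n$ stays bounded above and bounded away from zero. For the upper bound, the key remark is that the bounds on $\widecheck{u}^h_{\eps,\overline{k}_n,k_{\eps_n}}$ and on $\psi^H_{\overline{k}_n,k_{\eps_n}}$ derived in Section~\ref{sec:bounds} are uniform in $\overline{k}_n$: they only rely on the $H^1$ bound of $\overline{u}^H_{\overline{k}_n,k_{\eps_n}}$, which we just controlled. Dividing the first line of~\eqref{eq:arlequin_n} by $\overline{k}_n$ and passing to the limit along a subsequence where $\overline{k}_n\to\infty$ yields $\overline{A}_1(x_1,\overline{v}^H)=0$ for all $\overline{v}^H\in V_H^0$, where $\overline{A}_1$ is defined by~\eqref{eq:def_overline_A_1}. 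This identifies $x_1$ with the unique solution $\widetilde{u}_0^H$ of~\eqref{eq:contradiction} in $V_H^{\rm Dir BC}$, which contradicts the strict positivity of $\int_{D \cup D_c}|\nabla \widetilde{u}_0^H-e_1|^2$ recorded just below~\eqref{eq:J_contradiction}. For the lower bound, if $\overline{k}_n\to 0$ along a subsequence, the $h\to 0$ limit followed by Corollary~\ref{coro:multiplication} with $\overline{k}_0=0$ shows that the triple converges to a solution of~\eqref{eq:system_contradiction} with $\overline{k}=0$. Mimicking the analysis of Section~\ref{sec:non_0} (which is purely geometric in the limit and independent of whether one works with $k_\eps$ or $k^\star$), the $J$-value of any such limit is at least $\widetilde{I}_{0,H}\ge |D_c|>0$, again contradicting $J_{\eps_n,H,h_n}(\overline{k}_n)\to 0$.

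Having ruled out both degeneracies, any accumulation point $\overline{k}^\ast$ of $\overline{k}_n$ lies in $(0,\infty)$. Passing first to the limit $h\to 0$ (which, at fixed $\eps$, is standard finite element convergence from~\eqref{eq:arlequin} to~\eqref{eq:system_k_star}) and then to the limit $\eps\to 0$, Corollary~\ref{coro:multiplication} (applied to $\overline{k}_n\to\overline{k}^\ast$) identifies the limit as the unique solution $(\overline{u}^H_{\overline{k}^\ast,k^\star},\widecheck{u}_{\overline{k}^\ast,k^\star},\psi^H_{\overline{k}^\ast,k^\star})$ of~\eqref{eq:system_contradiction} with $\overline{k}=\overline{k}^\ast$. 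But the starting estimate forces $\overline{u}^H_{\overline{k}^\ast,k^\star}(x)=x_1$ in $D\cup D_c$, and the second (converse) assertion of Lemma~\ref{lemma:optimization} then forces $\overline{k}^\ast=k^\star$. Since every accumulation point equals $k^\star$, the full sequence converges, which is exactly~\eqref{eq:main_result2}.

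The main obstacle is the boundedness step: one has to use the specific structure of Section~\ref{sec:bounds} to obtain estimates on the Lagrange multiplier independent of the varying coefficient $\overline{k}_n$, and one has to be careful that Corollary~\ref{coro:multiplication} indeed applies in the degenerate case $\overline{k}_0=0$ (where uniqueness of the limit problem is lost, but any limit solution still lies in the set characterized in Section~\ref{sec:non_0} and so has a positive $J$-value). Once these two technical points are in place, the identification of the limit as $k^\star$ via Lemma~\ref{lemma:optimization} is essentially the consistency statement already recorded in Remark~\ref{rem:consistency}.
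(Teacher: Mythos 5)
Your proposal is correct and follows essentially the same route as the paper: the starting estimate $J_{\eps,H,h}(\overline{k}^{\rm opt}_\eps) \le J_{\eps,H,h}(k^\star) \to 0$ forces $\overline{u}^H \to x_1$; the uniform bounds of Section~\ref{sec:bounds} (which indeed depend only on the $H^1$ bound of $\overline{u}^H$, not on $\overline{k}$) permit the application of Corollary~\ref{coro:multiplication}; and Lemma~\ref{lemma:optimization} then identifies the limit as $k^\star$. The only difference is cosmetic: you rule out $\overline{k}_\eps^{\rm opt} \to \infty$ by dividing the first line by $\overline{k}_\eps^{\rm opt}$, whereas the paper keeps the product form~\eqref{eq:k_infty_hom} and argues that if the coefficient of $\overline{k}_\eps^{\rm opt}$ did not vanish in the limit then $\overline{k}_\eps^{\rm opt}$ must converge; and you separately exclude $\overline{k}_\eps^{\rm opt} \to 0$ via the $\widetilde{I}_{0,H}>0$ argument of Section~\ref{sec:non_0}, whereas the paper observes that this step is unnecessary since Lemma~\ref{lemma:optimization} is stated for $\overline{k}_a \in [0,\infty)$ and its conclusion $\overline{k}_a = \overline{k}_b = k^\star > 0$ automatically rules out the degenerate limit.
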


We discuss in Remark~\ref{rem:H_independant} below the fact that $H$ is kept fixed in~\eqref{eq:main_result2}. We do not need to finally take the limit $H \to 0$ to recover $k^\star$. This is a clear advantage from the computational viewpoint, since this property allows to work with values of $H$ that are not asymptotically small (see~\cite[Section~3.1]{ref_olga_comp} for some numerical results).  

\medskip

Before proceeding, we note that considering the regime $\eps \to 0$ implies that we also have $h \to 0$ (since $h$ has to be chosen much smaller than $\eps$). For simplicity and brevity of exposition, we therefore \emph{fix} $h = 0$, and point out that Theorem~\ref{th:optimization} still holds true. Taking the limit $h \to 0$ is just an additional, technical ingredient. We spare the reader with this unnecessary technicality.

We thus consider the following ``partially'' discretized system (which is~\eqref{eq:system_k_star} with $\overline{k} = \overline{k}^{\rm opt}_\eps$): find $\overline{u}^H_{\overline{k}^{\rm opt}_\eps,k_\eps} \in V_H^{\rm Dir BC}$, $\widecheck{u}_{\eps,\overline{k}^{\rm opt}_\eps,k_\eps} \in H^1(D_c \cup D_f)$ and $\psi^H_{\overline{k}^{\rm opt}_\eps,k_\eps} \in W_H^{\rm enrich}$ such that
\begin{equation} \label{eq:arlequin2}
  \begin{cases}
  \forall \overline{v}^H \in V_H^0, \quad & \overline{A}_{\overline{k}^{\rm opt}_\eps}(\overline{u}^H_{\overline{k}^{\rm opt}_\eps,k_\eps},\overline{v}^H) + {\cal C}(\overline{v}^H,\psi^H_{\overline{k}^{\rm opt}_\eps,k_\eps}) = 0,
  \\ \noalign{\vskip 3pt}
  \forall \widecheck{v} \in H^1(D_c \cup D_f), \quad & \widecheck{A}_{k_\eps}(\widecheck{u}_{\eps,\overline{k}^{\rm opt}_\eps,k_\eps},\widecheck{v}) - {\cal C}(\widecheck{v},\psi^H_{\overline{k}^{\rm opt}_\eps,k_\eps}) = 0,
  \\ \noalign{\vskip 3pt}
  \forall \phi^H \in W_H^{\rm enrich}, \quad & {\cal C}(\overline{u}^H_{\overline{k}^{\rm opt}_\eps, k_\eps} - \widecheck{u}_{\eps,\overline{k}^{\rm opt}_\eps, k_\eps},\phi^H) = 0.
  \end{cases}
\end{equation}
In the above system and in the proof below, we do not make explicit in the notation the fact that the optimal coefficient $\overline{k}^{\rm opt}_\eps$ depends on $H$ (in particular because $H$ is kept fixed and we do not need to take the limit $H \to 0$ to recover $k^\star$). 

\medskip

The proof of Theorem~\ref{th:homogenization} falls in two steps:
\begin{itemize}
\item as in Section~\ref{sec:bounds}, we first establish some bounds independent of $\eps$ on the solution $(\overline{u}^H_{\overline{k}^{\rm opt}_\eps, k_\eps},\widecheck{u}_{\eps,\overline{k}^{\rm opt}_\eps, k_\eps},\psi^H_{\overline{k}^{\rm opt}_\eps, k_\eps}$) to~\eqref{eq:arlequin2} (see Section~\ref{sec:bounds_eps}).
  \item we next pass to the limit $\eps \to 0$ in~\eqref{eq:arlequin2} (see Section~\ref{sec:k_opt}).
\end{itemize}

\subsubsection{Bounds on $\overline{u}^H_{\overline{k}^{\rm opt}_\eps, k_\eps}$, $\widecheck{u}_{\eps,\overline{k}^{\rm opt}_\eps, k_\eps}$ and $\psi^H_{\overline{k}^{\rm opt}_\eps, k_\eps}$} \label{sec:bounds_eps}

We pass to the limit $n \to \infty$ in~\eqref{eq:u_k^n_bound} (where we recall that the constant $C$ is in particular independent of $n$, $h$ and $\eps$), using that $\overline{k}^n \xrightarrow[n \to \infty]{} \overline{k}^{\rm opt}_\eps$ (which implies that $\overline{u}^H_{\overline{k}^n,k_\eps} \xrightarrow[n \to \infty]{} \overline{u}^H_{\overline{k}^{\rm opt}_\eps,k_\eps}$), and immediately obtain that the sequence $\overline{u}^H_{\overline{k}^{\rm opt}_\eps, k_\eps} \in V_H^{\rm Dir BC}$ is bounded in $H^1(D \cup D_c)$, independently of $\eps$.

We next simply repeat the steps of Section~\ref{sec:bounds} and obtain that the sequence $\widecheck{u}_{\eps,\overline{k}^{\rm opt}_\eps, k_\eps}$ (resp. the sequence $\psi^H_{\overline{k}^{\rm opt}_\eps, k_\eps} \in W_H^{\rm enrich}$) is bounded in $H^1(D_c \cup D_f)$ (resp. in $H^1(D_c)$), independently of $\eps$.

We thus deduce that there exist $\overline{u}^H_\star \in V_H^{\rm Dir BC}$, $\widecheck{u}_\star \in H^1(D_c \cup D_f)$ and $\psi^H_\star \in W_H^{\rm enrich}$ such that, up to the extraction of a subsequence, $\overline{u}^H_{\overline{k}^{\rm opt}_\eps, k_\eps}$ converges to $\overline{u}^H_\star$ in $H^1(D \cup D_c)$, $\widecheck{u}_{\eps,\overline{k}^{\rm opt}_\eps, k_\eps}$ weakly converges to $\widecheck{u}_\star$ in $H^1(D_c \cup D_f)$ and $\psi^H_{\overline{k}^{\rm opt}_\eps, k_\eps}$ converges to $\psi^H_\star$ in $H^1(D_c)$ when $\eps \to 0$.

\subsubsection{Limit system with $\overline{k}^{\rm opt}_\eps$} \label{sec:k_opt}

Passing to the limit $\eps\to 0$ in the first line of~\eqref{eq:arlequin2} is not straightforward since, unfortunately, we have little information on $\overline{k}^{\rm opt}_\eps$ so far. Since we do not seem to have an obvious bound independent of $\eps$ on the coefficient $\overline{k}^{\rm opt}_\eps$, we circumvent this difficulty as follows.

We have
$$
\lim_{h \to 0} I_{\eps,H,h} = \int_{D \cup D_c} \left| \nabla \overline{u}^H_{\overline{k}^{\rm opt}_\eps,k_\eps} - e_1 \right|^2,
$$
where $(\overline{u}^H_{\overline{k}^{\rm opt}_\eps, k_\eps},\widecheck{u}_{\eps,\overline{k}^{\rm opt}_\eps, k_\eps},\psi^H_{\overline{k}^{\rm opt}_\eps, k_\eps})$ is the solution to~\eqref{eq:arlequin2}. Passing to the limit $\eps \to 0$ and using~\eqref{eq:limit_contradiction_I}, we deduce that
$$
0 = \lim_{\eps \to 0} \lim_{h \to 0} I_{\eps,H,h} = \int_{D \cup D_c} \left| \nabla \overline{u}^H_\star - e_1 \right|^2
$$
where $\overline{u}^H_\star \in V_H^{\rm Dir BC}$ is the limit of $\overline{u}^H_{\overline{k}^{\rm opt}_\eps,k_\eps}$. This implies that
\begin{equation}\label{eq:lim_u_k^opt_bis}
  \overline{u}^H_\star(x) = x_1 \quad \text{in $D \cup D_c$}.
\end{equation}

We are now in position to show that the sequence $\overline{k}_\eps^{\rm opt}$ converges to some limit $\overline{k}^{\rm opt}_0$ when $\eps \to 0$. Repeating the steps from the beginning of Section~\ref{sec:k_n}, we take $\overline{v}^H = \overline{u}^H_{\overline{k}_\eps^{\rm opt},k_\eps} -\widetilde{u}_0^H \in V^0_H$ in the first line of~\eqref{eq:arlequin2}, where $\widetilde{u}_0^H \in V_H^{\rm Dir BC}$ is defined by~\eqref{eq:contradiction}. Using that $\overline{k}_\eps^{\rm opt}$ is a scalar, we thus obtain
\begin{multline}\label{eq:k_infty_hom}
  \overline{k}_\eps^{\rm opt} \left\| \nabla \left(\overline{u}^H_{\overline{k}_\eps^{\rm opt}, k_\eps} - \widetilde{u}_0^H \right) \right\|^2_{L^2(D)} + \frac{1}{2} \overline{k}_\eps^{\rm opt} \left\| \nabla \left( \overline{u}^H_{\overline{k}_\eps^{\rm opt}, k_\eps} - \widetilde{u}_0^H \right) \right\|^2_{L^2(D_c)}
  \\
  + \int_{D_c} \nabla \psi^H_{\overline{k}_\eps^{\rm opt}, k_\eps} \cdot \nabla \left( \overline{u}^H_{\overline{k}_\eps^{\rm opt}, k_\eps} - \widetilde{u}_0^H \right) + \int_{D_c} \psi^H_{\overline{k}_\eps^{\rm opt}, k_\eps} \left( \overline{u}^H_{\overline{k}_\eps^{\rm opt}, k_\eps} - \widetilde{u}_0^H \right) = 0.    
\end{multline}
All the terms in~\eqref{eq:k_infty_hom} converge when $\eps\to 0$, {\em except} possibly $\overline{k}^{\rm opt}_\eps$. The only case when we cannot deduce from~\eqref{eq:k_infty_hom} that $\overline{k}_\eps^{\rm opt}$ converges is that when the limit $\overline{u}^H_\star$ of $\overline{u}^H_{\overline{k}_\eps^{\rm opt},k_\eps}$ satisfies $\nabla \overline{u}^H_\star = \nabla \widetilde{u}_0^H$ in $D \cup D_c$. Since $\overline{u}^H_\star$ and $\widetilde{u}_0^H$ satisfy the same boundary condition on $\Gamma$, this would imply that $\overline{u}^H_\star = \widetilde{u}_0^H$ in $D \cup D_c$, and thus, in view of~\eqref{eq:lim_u_k^opt_bis}, that $\widetilde{u}_0^H(x) = x_1$ in $D \cup D_c$. We have already pointed out in Section~\ref{sec:k_n} that $\widetilde{u}_0^H$ is not equal to the function $x_1$ in $D \cup D_c$. We have thus obtained a contradiction, which shows that the sequence $\overline{k}^{\rm opt}_\eps$ converges (up to a subsequence extraction) to some coefficient $\overline{k}^{\rm opt}_0 \in [0,\infty)$ when $\eps \to 0$. It remains to prove that this coefficient $\overline{k}^{\rm opt}_0$ is equal to the homogenized coefficient $k^\star$. This will prove that the optimization problem~\eqref{eq:optim_J} indeed provides an approximation of the homogenized coefficient, and additionally that the {\em whole} sequence $\overline{k}^{\rm opt}_\eps$ (and not only a subsequence) converges.

\medskip

To show that $\overline{k}^{\rm opt}_0 = k^\star$, we pass to the limit $\eps \to 0$ in~\eqref{eq:arlequin2}. Using Corollary~\ref{coro:multiplication} (as established in Section~\ref{sec:bounds_eps}, the sequence $(\overline{u}^H_{\overline{k}^{\rm opt}_\eps, k_\eps},\widecheck{u}_{\eps,\overline{k}^{\rm opt}_\eps, k_\eps},\psi^H_{\overline{k}^{\rm opt}_\eps, k_\eps})$ indeed satisfies the appropriate convergence properties stated as assumptions in that corollary),
we observe that the limit $(\overline{u}^H_\star,\widecheck{u}_\star,\psi^H_\star)$ of $(\overline{u}^H_{\overline{k}^{\rm opt}_\eps, k_\eps},\widecheck{u}_{\eps,\overline{k}^{\rm opt}_\eps, k_\eps},\psi^H_{\overline{k}^{\rm opt}_\eps, k_\eps})$ is actually the solution (or {\em a solution}, if $\overline{k}^{\rm opt}_0=0$) to the following system: find $\overline{u}^H_{\overline{k}^{\rm opt}_0,k^\star} \in V_H^{\rm Dir BC}$, $\widecheck{u}_{\overline{k}^{\rm opt}_0,k^\star} \in H^1(D_c \cup D_f)$ and $\psi^H_{\overline{k}^{\rm opt}_0,k^\star} \in W_H^{\rm enrich}$ such that
\begin{equation} \label{eq:arlequin_limit2}
  \begin{cases}
  \forall \overline{v}^H \in V_H^0, \quad & \overline{A}_{\overline{k}^{\rm opt}_0}(\overline{u}^H_{\overline{k}^{\rm opt}_0, k^\star},\overline{v}^H) + {\cal C}(\overline{v}^H,\psi^H_{\overline{k}^{\rm opt}_0, k^\star}) = 0,
  \\ \noalign{\vskip 3pt}
  \forall \widecheck{v} \in H^1(D_c \cup D_f), \quad & \widecheck{A}_{k^\star}(\widecheck{u}_{\overline{k}^{\rm opt}_0, k^\star},\widecheck{v}) - {\cal C}(\widecheck{v},\psi^H_{\overline{k}^{\rm opt}_0, k^\star}) = 0,
  \\ \noalign{\vskip 3pt}
  \forall \phi^H \in W_H^{\rm enrich}, \quad & {\cal C}(\overline{u}^H_{\overline{k}^{\rm opt}_0, k^\star} - \widecheck{u}_{\overline{k}^{\rm opt}_0, k^\star},\phi^H) = 0.
  \end{cases}
\end{equation}
In addition, we know from~\eqref{eq:lim_u_k^opt_bis} that $\overline{u}^H_{\overline{k}^{\rm opt}_0,k^\star}(x) = \overline{u}^H_\star(x) = x_1$ in $D \cup D_c$. 

We now use the second assertion of Lemma~\ref{lemma:optimization} and obtain that $\overline{k}^{\rm opt}_0 = k^\star$ and $\psi^H_{\overline{k}^{\rm opt}_0, k^\star} = k^\star \, \psi_0$, which concludes the proof of Theorem~\ref{th:homogenization}. 

\begin{remark} \label{rem:H_independant}
We note that, in Theorem~\ref{th:homogenization}, the parameter $H$ is fixed and that it does not affect the value of the limit of $\overline{k}^{\rm opt}_\eps$. This is directly related to the consistency of the approach as discussed in Remark~\ref{rem:consistency}: for any $H>0$, the fact that $\overline{u}^H_{\overline{k},k^\star}(x) = x_1$ implies that $\overline{k} = k^\star$.
\end{remark}

\subsection{Uniqueness of the optimal coefficient for sufficiently small $\eps$} \label{sec:uniqueness}

Now that we have shown that any minimizer $\overline{k}^{\rm opt}_\eps(H,h)$ converges to $k^\star$ when $h$ and $\eps$ go to 0 (in the sense of Theorem~\ref{th:homogenization}), we are in position to show the uniqueness of the optimal coefficient when $\eps$ is sufficiently small. As in Section~\ref{sec:homogenization}, we hereafter fix $h=0$, and thus consider $J_{\eps,H}$ defined (compare with~\eqref{eq:def_J} and~\eqref{eq:arlequin}) by
\begin{equation} \label{eq:def_J_no-h}
J_{\eps, H}(\overline{k}) = \int_{D \cup D_c} \big| \nabla \overline{u}^H_{\overline{k}, k_\eps} - \nabla u_{\rm ref} \big|^2 = \int_{D \cup D_c} \big| \nabla \overline{u}^H_{\overline{k}, k_\eps} - e_1 \big|^2,
\end{equation}
where $(\overline{u}^H_{\overline{k}, k_\eps},\widecheck{u}_{\eps,\overline{k}, k_\eps}, \psi^H_{\overline{k}, k_\eps})$ is the solution to~\eqref{eq:system_k_star}. In the sequel, we thus consider $\overline{k}_\eps^{\rm opt}(H)$, solution to the minimization problem
\begin{equation} \label{eq:optim_J_no-h}
\inf \left\{ J_{\eps, H}(\overline{k}), \quad \overline{k} \in (0,\infty) \right\}.
\end{equation}
Our proof is based on the following two results.

\begin{lemma}[uniform convergence] \label{lem:conv_unif}
  Let $H>0$ be fixed. The minimizers $\overline{k}_\eps^{\rm opt}(H)$ of~\eqref{eq:optim_J_no-h} converge {\em uniformly} to $k^\star$, in the following sense: for any $\eta > 0$, there exists some $\eps_0(\eta,H)$ such that, for any $\eps \leq \eps_0(\eta,H)$, {\em any} minimizer $\overline{k}_\eps^{\rm opt}(H)$ of~\eqref{eq:optim_J_no-h} satisfies $| \overline{k}_\eps^{\rm opt}(H) - k^\star | \leq \eta$.
\end{lemma}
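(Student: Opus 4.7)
The plan is to argue by contradiction, using as a black box the chain of bounds and identifications already established in the proof of Theorem~\ref{th:homogenization}. Assume the conclusion fails: there exist $\eta>0$, a sequence $\eps_n\to 0$, and, for each $n$, a minimizer $\overline{k}_{\eps_n}^{\rm opt}(H)$ of~\eqref{eq:optim_J_no-h} with $\bigl|\overline{k}_{\eps_n}^{\rm opt}(H)-k^\star\bigr|>\eta$ for every $n$. The goal is to extract from this very sequence a subsequence converging to $k^\star$, contradicting the above strict inequality.

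I would replay, along the sequence $\{\eps_n\}$, the arguments of Sections~\ref{sec:bounds_eps} and~\ref{sec:k_opt}. None of those arguments depends on a specific selection rule for the minimizer: they rely only on the two facts that $\overline{k}_{\eps_n}^{\rm opt}(H)$ is a minimizer of $J_{\eps_n,H}$ and that the associated triple $(\overline{u}^H_{\overline{k}_{\eps_n}^{\rm opt},k_{\eps_n}},\widecheck{u}_{\eps_n,\overline{k}_{\eps_n}^{\rm opt},k_{\eps_n}},\psi^H_{\overline{k}_{\eps_n}^{\rm opt},k_{\eps_n}})$ solves the variational system~\eqref{eq:arlequin2}. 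From Section~\ref{sec:bounds_eps}, this triple is bounded in $H^1(D\cup D_c)\times H^1(D_c\cup D_f)\times H^1(D_c)$ uniformly in $n$, hence (up to extraction) converges to some $(\overline{u}^H_\star,\widecheck{u}_\star,\psi^H_\star)$, with strong convergence on the finite-dimensional components. Since $J_{\eps_n,H}(\overline{k}_{\eps_n}^{\rm opt})=I_{\eps_n,H}\to 0$ by~\eqref{eq:limit_contradiction_I} (taken at $h=0$), we immediately get $\overline{u}^H_\star(x)=x_1$ in $D\cup D_c$.

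Next, I would apply the identity~\eqref{eq:k_infty_hom} with $\overline{k}=\overline{k}_{\eps_n}^{\rm opt}$: the coefficient multiplying $\overline{k}_{\eps_n}^{\rm opt}$ converges to $\|\nabla(x_1-\widetilde{u}_0^H)\|_{L^2(D)}^2+\tfrac12\|\nabla(x_1-\widetilde{u}_0^H)\|_{L^2(D_c)}^2$, which is strictly positive since $\widetilde{u}_0^H$ is not the affine function $x_1$ (as noted in Section~\ref{sec:k_n}). The remaining terms of~\eqref{eq:k_infty_hom} are controlled by the $H^1$ bounds on $\overline{u}^H_{\overline{k}_{\eps_n}^{\rm opt},k_{\eps_n}}$ and $\psi^H_{\overline{k}_{\eps_n}^{\rm opt},k_{\eps_n}}$. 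Hence $\overline{k}_{\eps_n}^{\rm opt}$ is bounded and, up to a further extraction, converges to some $\overline{k}_0^{\rm opt}\in[0,\infty)$. Corollary~\ref{coro:multiplication} then identifies $(\overline{u}^H_\star,\widecheck{u}_\star,\psi^H_\star)$ as a solution of~\eqref{eq:arlequin_limit2} associated with $\overline{k}_0^{\rm opt}$, and, combined with $\overline{u}^H_\star(x)=x_1$, the second assertion of Lemma~\ref{lemma:optimization} forces $\overline{k}_0^{\rm opt}=k^\star$.

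This produces a subsequence of $\overline{k}_{\eps_n}^{\rm opt}(H)$ converging to $k^\star$, in direct contradiction with $\bigl|\overline{k}_{\eps_n}^{\rm opt}(H)-k^\star\bigr|>\eta$ holding along the whole sequence. The main point of the argument is conceptual rather than technical: each individual step of the proof of Theorem~\ref{th:homogenization} is indifferent to how a minimizer is selected at each $\eps$, so the convergence stated there is automatically uniform in the sense of Lemma~\ref{lem:conv_unif}. The only obstacle worth mentioning is verifying this "selection-free" character of the argument step by step, but since every estimate uses only the minimization property and the system~\eqref{eq:arlequin2}, no genuine new difficulty arises.
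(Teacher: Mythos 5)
Your proof is correct, and it takes a genuinely different route from the paper's. You argue by compactness and contradiction: if uniformity fails, extract a sequence $\eps_n\to 0$ and minimizers $\overline{k}_{\eps_n}^{\rm opt}$ remaining $\eta$-away from $k^\star$, then replay the selection-indifferent chain of a priori bounds (Section~\ref{sec:bounds_eps}), the identity~\eqref{eq:k_infty_hom}, Corollary~\ref{coro:multiplication}, and Lemma~\ref{lemma:optimization} to force a subsequence of $\overline{k}_{\eps_n}^{\rm opt}$ to converge to $k^\star$ --- a contradiction. The underlying principle is that convergence along \emph{every} selection of minimizers automatically upgrades to uniform convergence over selections, and you correctly verify that the estimates in the proof of Theorem~\ref{th:homogenization} use only the minimization property and the variational system, never a particular choice rule. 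The paper instead proceeds directly and quantitatively: it normalizes by $\overline{k}_\eps^{\rm opt}$, decomposes the rescaled Lagrange multiplier as $\tau_\eps(\psi_0-\Pi_H\psi_0)+\widetilde{\theta}^H_\eps$, and derives explicit uniform bounds --- $\| \widetilde{\theta}^H_\eps - \Pi_H(\psi_0) \|_{H^1(D_c)} \leq C\sqrt{J_{\eps,H}(k^\star)}$, $|1-\tau_\eps|\leq (r_{\eps,H}+C\sqrt{J_{\eps,H}(k^\star)})\cdot(\cdots)$ --- that depend only on quantities ($J_{\eps,H}(k^\star)$, $r_{\eps,H}$) manifestly independent of the minimizer; uniformity then falls out of these concrete estimates. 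The trade-off is clear: your argument is shorter and conceptually cleaner but purely qualitative, while the paper's is longer but constructive and yields rates of convergence that scale like $\sqrt{J_{\eps,H}(k^\star)}+r_{\eps,H}$, which is potentially useful even though the lemma as stated only needs the qualitative conclusion. Both approaches are valid for the purposes of Theorem~\ref{th:uniqueness}, which uses this lemma only to localize all minimizers inside a fixed neighborhood of $k^\star$.

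One small point worth making explicit in your write-up: when you invoke Lemma~\ref{lemma:optimization} with $\overline{k}_a=\overline{k}_0^{\rm opt}$ and $\overline{k}_b=k^\star$, note that the lemma permits $\overline{k}_a\in[0,\infty)$, so the possibility $\overline{k}_0^{\rm opt}=0$ is not excluded a priori; however, the conclusion $\overline{k}_a=\overline{k}_b=k^\star>0$ then retroactively rules it out, exactly as in the paper's Section~\ref{sec:k_opt}. You implicitly use this, but it would strengthen the exposition to say so.
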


\begin{proof}
Since $\overline{k}_\eps^{\rm opt}$ is a minimizer of $J_{\eps, H}$, we have, using that $k^\star$ is an admissible test coefficient in~\eqref{eq:optim_J}, that
$$
\int_{D \cup D_c} \big| \nabla \overline{u}^H_{\overline{k}_\eps^{\rm opt}, k_\eps} - \nabla u_{\rm ref} \big|^2 = J_{\eps, H}(\overline{k}_\eps^{\rm opt}) \leq J_{\eps, H}(k^\star).
$$
Using that $\overline{u}^H_{\overline{k}_\eps^{\rm opt}, k_\eps} - u_{\rm ref}$ vanishes on $\Gamma$, we deduce that
\begin{equation} \label{eq:nounou3}
\| \overline{u}^H_{\overline{k}_\eps^{\rm opt}, k_\eps} - u_{\rm ref} \|^2_{H^1(D \cup D_c)} \leq C \, J_{\eps, H}(k^\star),
\end{equation}
where $C$ only depends on $D \cup D_c$. Recalling that $\dps \lim_{\eps \to 0} J_{\eps, H}(k^\star) = 0$ (see~\eqref{eq:limit_contradiction}), we deduce that $\overline{u}^H_{\overline{k}_\eps^{\rm opt}, k_\eps}$ converges to $u_{\rm ref}$ in a uniform (with respect to the choice of the minimizer $\overline{k}_\eps^{\rm opt}$) manner.

\medskip

We are now going to show, and this is the main part of the proof, that $\nabla \overline{u}^H_{\overline{k}^{\rm opt}_\eps, k_\eps} / \overline{k}_\eps^{\rm opt}$ converges to $\nabla u_{\rm ref} / k^\star$ uniformly with respect to the choice of the minimizer $\overline{k}_\eps^{\rm opt}$. Turning first to the Lagrange multiplier, and recalling that $\overline{k}_\eps^{\rm opt} > 0$, we can introduce
$$
\theta^H_{\overline{k}_\eps^{\rm opt}, k_\eps} = \frac{\psi^H_{\overline{k}_\eps^{\rm opt}, k_\eps}}{\overline{k}_\eps^{\rm opt}},
$$
and, after dividing by $\overline{k}_\eps^{\rm opt}$, we rewrite the first line of~\eqref{eq:arlequin2} in the form
\begin{equation} \label{eq:nounou}
\forall \overline{v}^H \in V_H^0, \quad \overline{A}_1(\overline{u}^H_{\overline{k}^{\rm opt}_\eps,k_\eps},\overline{v}^H) + {\cal C}(\overline{v}^H,\theta^H_{\overline{k}^{\rm opt}_\eps,k_\eps}) = 0,
\end{equation}
where we recall that the bilinear form $\overline{A}_1$ is defined by~\eqref{eq:def_overline_A_1}. Since $\theta^H_{\overline{k}^{\rm opt}_\eps,k_\eps} \in W_H^{\rm enrich}$, we can represent it as 
\begin{equation}\label{eq:lagrange_multiplier_expansion_eps}
  \theta^H_{\overline{k}^{\rm opt}_\eps,k_\eps} = \tau_\eps \, \big(\psi_0 - \Pi_H(\psi_0) \big) + \widetilde{\theta}^H_\eps,
\end{equation} 
for some $\tau_\eps \in \RR$ (we keep implicit the dependency of $\tau_\eps$ with respect to $H$) and some $\widetilde{\theta}^H_\eps \in W_H$ (which both depend on the choice of the minimizer $\overline{k}^{\rm opt}_\eps$), where we recall that the projection operator $\Pi_H$ is defined by~\eqref{eq:Pih_a}.
We have shown at the end of Section~\ref{sec:k_opt} that $\theta^H_{\overline{k}^{\rm opt}_\eps,k_\eps}$ converges to $\psi_0$. We thus expect $\tau_\eps$ to converge to 1 and $\widetilde{\theta}^H_\eps$ to converge to $\Pi_H(\psi_0)$. This is indeed the case since~\eqref{eq:lagrange_multiplier_expansion_eps} implies that
$$
{\cal C}\big(\theta^H_{\overline{k}^{\rm opt}_\eps,k_\eps},\psi_0 - \Pi_H(\psi_0)\big) = \tau_\eps \, \| \Pi_H(\psi_0) - \psi_0 \|^2_{H^1(D_c)},
$$
and thus, passing to the limit $\eps \to 0$, we obtain
$$
\lim_{\eps \to 0} \tau_\eps = \frac{{\cal C}\big(\psi_0,\psi_0 - \Pi_H(\psi_0)\big)}{\| \Pi_H(\psi_0) - \psi_0 \|^2_{H^1(D_c)}} = 1.
$$
We now establish a bound on $\widetilde{\theta}^H_\eps$. Inserting~\eqref{eq:lagrange_multiplier_expansion_eps} in~\eqref{eq:nounou} and using the orthogonality of any $\overline{v}^H \in V_H^0$ with $\psi_0 - \Pi_H(\psi_0)$, we obtain, for any $\overline{v}^H \in V_H^0$, that
$$
{\cal C}(\overline{v}^H,\widetilde{\theta}^H_\eps)
=
- \overline{A}_1(\overline{u}^H_{\overline{k}^{\rm opt}_\eps,k_\eps},\overline{v}^H)
=
\overline{A}_1(u_{\rm ref} - \overline{u}^H_{\overline{k}^{\rm opt}_\eps,k_\eps},\overline{v}^H) + {\cal C}(\overline{v}^H,\psi_0),
$$
where we have used the variational formulation~\eqref{eq:LMvariational} satisfied by $\psi_0$. We thus deduce that
\begin{equation} \label{eq:nounou2}
\forall \overline{v}^H \in V_H^0, \quad {\cal C}\big(\overline{v}^H,\widetilde{\theta}^H_\eps-\Pi_H(\psi_0)\big) = \overline{A}_1(u_{\rm ref} - \overline{u}^H_{\overline{k}^{\rm opt}_\eps,k_\eps},\overline{v}^H).
\end{equation}
Using arguments similar to those used in Section~\ref{sec:bounds}, we can extend the function $\widetilde{\theta}^H_\eps - \Pi_H(\psi_0)$, which is only defined in $D_c$, over the domain $D$, and thus introduce some $\widetilde{v}^H_\eps$ defined in $D \cup D_c$, satisfying $\widetilde{v}^H_\eps \in V_H^0$, $\widetilde{v}^H_\eps = \widetilde{\theta}^H_\eps - \Pi_H(\psi_0)$ in $D_c$ and $\| \widetilde{v}^H_\eps \|_{H^1(D \cup D_c)} \leq C \| \widetilde{\theta}^H_\eps - \Pi_H(\psi_0) \|_{H^1(D_c)}$ for some constant $C$ independent of $H$ and $\eps$ (and of the choice of the minimizer). 
Taking $\overline{v}^H = \widetilde{v}^H_\eps$ in~\eqref{eq:nounou2}, we thus infer that
\begin{multline*}
\| \widetilde{\theta}^H_\eps - \Pi_H(\psi_0) \|^2_{H^1(D_c)}
=
{\cal C}\big(\widetilde{v}^H_\eps,\widetilde{\theta}^H_\eps - \Pi_H(\psi_0)\big)
=
\overline{A}_1(u_{\rm ref} - \overline{u}^H_{\overline{k}^{\rm opt}_\eps,k_\eps},\widetilde{v}^H_\eps)
\\
\leq
\| u_{\rm ref} - \overline{u}^H_{\overline{k}^{\rm opt}_\eps,k_\eps} \|_{H^1(D \cup D_c)} \, \| \widetilde{v}^H_\eps \|_{H^1(D \cup D_c)},
\end{multline*}
and hence, using~\eqref{eq:nounou3},
\begin{equation}
  \| \widetilde{\theta}^H_\eps - \Pi_H(\psi_0) \|_{H^1(D_c)}
  \leq
  C \| u_{\rm ref} - \overline{u}^H_{\overline{k}^{\rm opt}_\eps,k_\eps} \|_{H^1(D \cup D_c)}
  \leq
  C \sqrt{J_{\eps, H}(k^\star)},
  \label{eq:nounou4}
\end{equation}
for some $C$ independent of $\eps$ and of the choice of the minimizer $\overline{k}_\eps^{\rm opt}$.

\medskip

We now write the second line of~\eqref{eq:arlequin2}. After dividing by $\overline{k}_\eps^{\rm opt}$, it reads
\begin{multline*}
\forall \widecheck{v} \in H^1(D_c \cup D_f), \quad \widecheck{A}_{k_\eps}\left(\frac{\widecheck{u}_{\eps,\overline{k}^{\rm opt}_\eps,k_\eps}}{\overline{k}^{\rm opt}_\eps},\widecheck{v}\right) = {\cal C}(\widecheck{v},\theta^H_{\overline{k}^{\rm opt}_\eps,k_\eps}) \\ = \tau_\eps \, {\cal C}\big(\widecheck{v},\psi_0-\Pi_H(\psi_0)\big) + {\cal C}(\widecheck{v},\widetilde{\theta}^H_\eps).
\end{multline*}
We thus introduce $\widecheck{u}_{\eps,1}$ and $\widecheck{u}_{\eps,2}$, the unique solutions (with vanishing mean in $D_c$) in $H^1(D_c \cup D_f)$ to the problems
\begin{align}
  \forall \widecheck{v} \in H^1(D_c \cup D_f), \quad \widecheck{A}_{k_\eps}(\widecheck{u}_{\eps,1},\widecheck{v}) &= {\cal C}\big(\widecheck{v},\psi_0-\Pi_H(\psi_0)\big),
  \label{eq:nounou5bis}
  \\
  \forall \widecheck{v} \in H^1(D_c \cup D_f), \quad \widecheck{A}_{k_\eps}(\widecheck{u}_{\eps,2},\widecheck{v}) &= {\cal C}(\widecheck{v},\widetilde{\theta}^H_\eps),
  \label{eq:nounou5}
\end{align}
and of course have
\begin{equation} \label{eq:nounou7}
\frac{\widecheck{u}_{\eps,\overline{k}^{\rm opt}_\eps,k_\eps}}{\overline{k}^{\rm opt}_\eps} = \lambda_\eps + \tau_\eps \, \widecheck{u}_{\eps,1} + \widecheck{u}_{\eps,2}
\end{equation}
for some constant $\lambda_\eps$.

To study $\widecheck{u}_{\eps,2}$, we introduce the unique solution $\widecheck{u}_{\eps,3}$ (with vanishing mean in $D_c$) in $H^1(D_c \cup D_f)$ to
\begin{equation} \label{eq:nounou8}
 \forall \widecheck{v} \in H^1(D_c \cup D_f), \quad \widecheck{A}_{k_\eps}(\widecheck{u}_{\eps,3},\widecheck{v}) = {\cal C}\big(\widecheck{v},\Pi_H(\psi_0)\big).
\end{equation}
Subtracting~\eqref{eq:nounou8} from~\eqref{eq:nounou5}, we have
$$
\forall \widecheck{v} \in H^1(D_c \cup D_f), \quad \widecheck{A}_{k_\eps}(\widecheck{u}_{\eps,2}-\widecheck{u}_{\eps,3},\widecheck{v}) = {\cal C}\big(\widecheck{v},\widetilde{\theta}^H_\eps-\Pi_H(\psi_0)\big).
$$
Taking $\widecheck{v} = \widecheck{u}_{\eps,2} - \widecheck{u}_{\eps,3}$ in the above equation and using that $k_\eps$ is bounded from below uniformly in $\eps$, a Poincar\'e-Wirtinger inequality in $H^1(D_c \cup D_f)$ and~\eqref{eq:nounou4}, we obtain that
\begin{equation} \label{eq:nounou6}
  \| \widecheck{u}_{\eps,2}-\widecheck{u}_{\eps,3} \|_{H^1(D_c \cup D_f)} \leq C \| \widetilde{\theta}^H_\eps-\Pi_H(\psi_0) \|_{H^1(D_c)} \leq C \sqrt{J_{\eps, H}(k^\star)},
\end{equation}
for some $C$ independent of $\eps$ and of the choice of the minimizer $\overline{k}_\eps^{\rm opt}$. 

\medskip

We now turn to the third line of~\eqref{eq:arlequin2}. After dividing by $\overline{k}_\eps^{\rm opt}$ and using~\eqref{eq:nounou7}, we have that, for any $\phi^H \in W_H^{\rm enrich}$,
$$
{\cal C}\left(\frac{\overline{u}^H_{\overline{k}^{\rm opt}_\eps, k_\eps}}{\overline{k}_\eps^{\rm opt}} - \left[ \lambda_\eps + \tau_\eps \, \widecheck{u}_{\eps,1} + \widecheck{u}_{\eps,2} \right],\phi^H\right) = 0,
$$
and hence
\begin{align*}
  & {\cal C}\left(\frac{\overline{u}^H_{\overline{k}^{\rm opt}_\eps, k_\eps}}{\overline{k}_\eps^{\rm opt}} - \frac{u_{\rm ref}}{k^\star} - \lambda_\eps - (\tau_\eps-1) \, \widecheck{u}_{\eps,1},\phi^H\right)
  \\
  &=
  {\cal C}\left(\widecheck{u}_{\eps,1} + \widecheck{u}_{\eps,2} - \frac{u_{\rm ref}}{k^\star},\phi^H\right)
  \\
  &=
  {\cal C}\left(\widecheck{u}_{\eps,1} + \widecheck{u}_{\eps,3} - \frac{u_{\rm ref}}{k^\star},\phi^H\right) + {\cal C}\left(\widecheck{u}_{\eps,2} - \widecheck{u}_{\eps,3},\phi^H\right).
\end{align*}
Using the projection operator $\Pi_H^{\rm enrich}$ defined by~\eqref{eq:Pih_b}, we obtain that, for any $\phi^H \in W_H^{\rm enrich}$,
\begin{multline} \label{eq:nounou9}
{\cal C}\left(\frac{\overline{u}^H_{\overline{k}^{\rm opt}_\eps, k_\eps}}{\overline{k}_\eps^{\rm opt}} - \frac{u_{\rm ref}}{k^\star} - \lambda_\eps - (\tau_\eps-1) \, \Pi_H^{\rm enrich}(\widecheck{u}_{\eps,1}),\phi^H\right)
\\
=
{\cal C}\left(\Pi_H^{\rm enrich}(\widecheck{u}_{\eps,4}) - \frac{u_{\rm ref}}{k^\star},\phi^H\right) + {\cal C}\left(\widecheck{u}_{\eps,2} - \widecheck{u}_{\eps,3},\phi^H\right),
\end{multline}
where we have introduced $\widecheck{u}_{\eps,4} = \widecheck{u}_{\eps,1} + \widecheck{u}_{\eps,3}$.

\medskip

Let us now bound the right-hand side of~\eqref{eq:nounou9}. The second term can be bounded using~\eqref{eq:nounou6}. For the first term, we note that $\widecheck{u}_{\eps,4}$ is the unique solution (with vanishing mean in $D_c$) in $H^1(D_c \cup D_f)$ to
\begin{equation} \label{eq:nounou10}
 \forall \widecheck{v} \in H^1(D_c \cup D_f), \quad \widecheck{A}_{k_\eps}(\widecheck{u}_{\eps,4},\widecheck{v}) = {\cal C}(\widecheck{v},\psi_0).
\end{equation}
The function $\widecheck{u}_{\eps,4}$ is therefore independent from the choice of the minimizer $\overline{k}_\eps^{\rm opt}$. In addition, the homogenized limit of the Neumann problem~\eqref{eq:nounou10} reads as follows: when $\eps \to 0$, $\widecheck{u}_{\eps,4}$ converges (strongly in $L^2(D_c \cup D_f)$ and weakly in $H^1(D_c \cup D_f)$) to $\widecheck{u}_{\star,4}$, the unique solution (with vanishing mean in $D_c$) in $H^1(D_c \cup D_f)$ to
\begin{equation} \label{eq:nounou11}
\forall \widecheck{v} \in H^1(D_c \cup D_f), \quad \widecheck{A}_{k^\star}(\widecheck{u}_{\star,4},\widecheck{v}) = {\cal C}(\widecheck{v},\psi_0).
\end{equation}
We hence deduce that $\Pi_H^{\rm enrich}(\widecheck{u}_{\eps,4})$ converges (strongly in $H^1(D_c \cup D_f)$, since $\Pi_H^{\rm enrich}(\widecheck{u}_{\eps,4})$ belongs to the finite dimensional space $W_H^{\rm enrich}$) to $\Pi_H^{\rm enrich}(\widecheck{u}_{\star,4})$. We next observe that, by definition of $\psi_0$, we can solve~\eqref{eq:nounou11} and we know that $\widecheck{u}_{\star,4}(x) = u_{\rm ref}(x)/k^\star = x_1/k^\star$. We hence have that $\Pi_H^{\rm enrich}(\widecheck{u}_{\star,4}) = \widecheck{u}_{\star,4} = u_{\rm ref}/k^\star$. We thus have shown that
\begin{equation} \label{eq:nounou13}
r_{\eps,H} = \left\| \Pi_H^{\rm enrich}(\widecheck{u}_{\eps,4}) - \frac{u_{\rm ref}}{k^\star} \right\|_{H^1(D_c \cup D_f)} \underset{\eps \to 0}{\to} 0,
\end{equation}
with a rate of convergence (with respect to $\eps$) independent of the choice of the minimizer $\overline{k}_\eps^{\rm opt}$. 

\medskip

To manipulate the left-hand side of~\eqref{eq:nounou9}, we write that
\begin{equation} \label{eq:nounou12}
\Pi_H^{\rm enrich}(\widecheck{u}_{\eps,1}) = \alpha_\eps \, \big(\psi_0 - \Pi_H(\psi_0) \big) + \widehat{u}^H_{\eps,1},
\end{equation}
for some $\alpha_\eps \in \RR$ (again, we keep implicit the dependency of $\alpha_\eps$ with respect to $H$) and some $\widehat{u}^H_{\eps,1} \in W_H$. Using~\eqref{eq:nounou5bis}, we observe that
\begin{multline} \label{eq:nounou14}
\alpha_\eps \| \Pi_H(\psi_0) - \psi_0 \|^2_{H^1(D_c)} = {\cal C}\big(\Pi_H^{\rm enrich}(\widecheck{u}_{\eps,1}),\psi_0 - \Pi_H(\psi_0)\big) \\ = {\cal C}\big(\widecheck{u}_{\eps,1},\psi_0 - \Pi_H(\psi_0)\big) = \widecheck{A}_{k_\eps}(\widecheck{u}_{\eps,1},\widecheck{u}_{\eps,1}).
\end{multline}

\medskip

We now write~\eqref{eq:nounou9} with $\phi^H = \psi_0 - \Pi_H(\psi_0)$. Observing that $\dps \frac{\overline{u}^H_{\overline{k}^{\rm opt}_\eps, k_\eps}}{\overline{k}_\eps^{\rm opt}} - \frac{u_{\rm ref}}{k^\star} - \lambda_\eps \in W_H$, we deduce that
\begin{multline*}
{\cal C}\Big( (1-\tau_\eps) \, \Pi_H^{\rm enrich}(\widecheck{u}_{\eps,1}),\psi_0 - \Pi_H(\psi_0)\Big)
\\
=
{\cal C}\left(\Pi_H^{\rm enrich}(\widecheck{u}_{\eps,4}) - \frac{u_{\rm ref}}{k^\star},\psi_0 - \Pi_H(\psi_0)\right) + {\cal C}\big(\widecheck{u}_{\eps,2} - \widecheck{u}_{\eps,3},\psi_0 - \Pi_H(\psi_0)\big).
\end{multline*}
Using~\eqref{eq:nounou12} in the left-hand side and~\eqref{eq:nounou13} and~\eqref{eq:nounou6} in the right-hand side, we obtain
$$
|1-\tau_\eps| \, |\alpha_\eps| \, \| \Pi_H(\psi_0) - \psi_0 \|^2_{H^1(D_c)} \leq \left( r_{\eps,H} + C \sqrt{J_{\eps, H}(k^\star)} \right) \| \Pi_H(\psi_0) - \psi_0 \|_{H^1(D_c)},
$$
and hence, using~\eqref{eq:nounou14},
\begin{equation} \label{eq:nounou17}
|1-\tau_\eps| \leq \left( r_{\eps,H} + C \sqrt{J_{\eps, H}(k^\star)} \right) \ \frac{\| \Pi_H(\psi_0) - \psi_0 \|_{H^1(D_c)}}{\widecheck{A}_{k_\eps}(\widecheck{u}_{\eps,1},\widecheck{u}_{\eps,1})}.
\end{equation}
We note that $\widecheck{A}_{k_\eps}(\widecheck{u}_{\eps,1},\widecheck{u}_{\eps,1}) \neq 0$. Indeed, if this quantity vanishes, then $\widecheck{u}_{\eps,1}$ is a constant, which implies, using~\eqref{eq:nounou5bis}, that ${\cal C}\big(\widecheck{v},\psi_0-\Pi_H(\psi_0)\big) = 0$ for any $\widecheck{v} \in H^1(D_c \cup D_f)$, and hence $\psi_0 = \Pi_H(\psi_0)$, which is not true.

We eventually note that, in view of its definition~\eqref{eq:nounou5bis}, $\widecheck{u}_{\eps,1}$ does not depend on the choice of the minimizer $\overline{k}_\eps^{\rm opt}$. When $\eps \to 0$, the quantity $\widecheck{A}_{k_\eps}(\widecheck{u}_{\eps,1},\widecheck{u}_{\eps,1})$ converges (with a rate independent of the choice of the minimizer $\overline{k}_\eps^{\rm opt}$) to its homogenized limit $\widecheck{A}_{k^\star}(\widecheck{u}_{\star,1},\widecheck{u}_{\star,1}) \neq 0$. In addition, $r_{\eps,H}$ and $J_{\eps, H}(k^\star)$ both converge to zero with rates independent of the choice of the minimizer $\overline{k}_\eps^{\rm opt}$. We thus deduce from~\eqref{eq:nounou17} that
$$
\lim_{\eps \to 0} \tau_\eps = 1 \quad \text{uniformly with respect to the choice of the minimizer $\overline{k}_\eps^{\rm opt}$}.
$$

\medskip

Taking now $\dps \phi^H = \frac{\overline{u}^H_{\overline{k}^{\rm opt}_\eps, k_\eps}}{\overline{k}_\eps^{\rm opt}} - \frac{u_{\rm ref}}{k^\star} - \lambda_\eps - (\tau_\eps-1) \, \Pi_H^{\rm enrich}(\widecheck{u}_{\eps,1})$ in~\eqref{eq:nounou9} and using~\eqref{eq:nounou13} and~\eqref{eq:nounou6} to bound the right-hand side, we obtain
$$
\left\| \frac{\overline{u}^H_{\overline{k}^{\rm opt}_\eps, k_\eps}}{\overline{k}_\eps^{\rm opt}} - \frac{u_{\rm ref}}{k^\star} - \lambda_\eps - (\tau_\eps-1) \, \Pi_H^{\rm enrich}(\widecheck{u}_{\eps,1}) \right\|_{H^1(D_c)} \leq r_{\eps,H} + C \sqrt{J_{\eps, H}(k^\star)},
$$
and hence
$$
\left\| \nabla \left( \frac{\overline{u}^H_{\overline{k}^{\rm opt}_\eps, k_\eps}}{\overline{k}_\eps^{\rm opt}} - \frac{u_{\rm ref}}{k^\star} \right) \right\|_{L^2(D_c)} \leq r_{\eps,H} + C \sqrt{J_{\eps, H}(k^\star)} + |\tau_\eps-1| \, \left\| \Pi_H^{\rm enrich}(\widecheck{u}_{\eps,1}) \right\|_{H^1(D_c)},
$$
which implies that $\dps \frac{\nabla \overline{u}^H_{\overline{k}^{\rm opt}_\eps, k_\eps}}{\overline{k}_\eps^{\rm opt}}$ converges to $\dps \frac{\nabla u_{\rm ref}}{k^\star}$ uniformly with respect to the choice of the minimizer $\overline{k}_\eps^{\rm opt}$. We thus have a similar uniform convergence of $\dps \frac{\| \nabla \overline{u}^H_{\overline{k}^{\rm opt}_\eps, k_\eps} \|_{L^2(D_c)}}{\overline{k}_\eps^{\rm opt}}$ to $\dps \frac{\| \nabla u_{\rm ref} \|_{L^2(D_c)}}{k^\star}$. In view of~\eqref{eq:nounou3}, we also have a uniform convergence of $\| \nabla \overline{u}^H_{\overline{k}^{\rm opt}_\eps, k_\eps} \|_{L^2(D_c)}$ to $\| \nabla u_{\rm ref} \|_{L^2(D_c)}$. These two properties imply a uniform convergence of $\overline{k}_\eps^{\rm opt}$ to $k^\star$. This concludes the proof of Lemma~\ref{lem:conv_unif}.
\end{proof}

\begin{lemma} \label{lem:bound_der_J}
Consider $J_{\eps,H}$ defined by~\eqref{eq:def_J_no-h}, and consider some neighborhood ${\cal K} = (c_-,c_+)$ of $k^\star$ with $c_->0$. Then, for any derivation order $m \geq 0$, there exists $C_m$ such that, for any $\eps$, any $H>0$ and any $\overline{k} \in {\cal K}$,
\begin{equation} \label{eq:bound_J_m}
\left| \frac{\partial^m J_{\eps,H}}{\partial \overline{k}^m}(\overline{k}) \right| \leq C_m.
\end{equation}
\end{lemma}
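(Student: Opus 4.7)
The map $\overline{k} \mapsto (u_0, v_0, p_0) := (\overline{u}^H_{\overline{k},k_\eps}, \widecheck{u}_{\eps,\overline{k},k_\eps}, \psi^H_{\overline{k},k_\eps})$ solving~\eqref{eq:system_k_star} is analytic on $(0,\infty)$: indeed, $\overline{A}_{\overline{k}} = \overline{k}\,\overline{A}_1$ where $\overline{A}_1$ is the $\overline{k}$-independent bilinear form defined in~\eqref{eq:def_overline_A_1}, so~\eqref{eq:system_k_star} depends affinely on $\overline{k}$ and its unique solution is a rational function of $\overline{k}$ with no singularity on $(0,\infty)$. Denoting $(u_m, v_m, p_m) := \partial^m/\partial \overline{k}^m\,(u_0, v_0, p_0)$ and differentiating~\eqref{eq:system_k_star} $m$ times via Leibniz, I would obtain, for every $m \geq 1$, that $(u_m, v_m, p_m) \in V_H^0 \times H^1(D_c \cup D_f) \times W_H^{\rm enrich}$ (the homogeneous Dirichlet condition $u_m = 0$ on $\Gamma$ follows because the data $x_1$ is independent of $\overline{k}$) and satisfies
\begin{align*}
\overline{k}\,\overline{A}_1(u_m,\overline{v}^H) + {\cal C}(\overline{v}^H,p_m) &= -m\,\overline{A}_1(u_{m-1},\overline{v}^H), \quad \forall \overline{v}^H \in V_H^0, \\
\widecheck{A}_{k_\eps}(v_m,\widecheck{v}) - {\cal C}(\widecheck{v},p_m) &= 0, \quad \forall \widecheck{v} \in H^1(D_c \cup D_f), \\
{\cal C}(u_m - v_m,\phi^H) &= 0, \quad \forall \phi^H \in W_H^{\rm enrich}.
\end{align*}

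The a priori estimate then follows by choosing $\overline{v}^H = u_m$, $\widecheck{v} = v_m$, and $\phi^H = p_m$, summing the three identities, and observing that the Lagrange-multiplier contribution cancels thanks to the constraint line:
$$\overline{k}\,\overline{A}_1(u_m,u_m) + \widecheck{A}_{k_\eps}(v_m,v_m) = -m\,\overline{A}_1(u_{m-1},u_m).$$
Since $u_m$ vanishes on $\Gamma$, the Poincar\'e inequality on $\{v \in H^1(D \cup D_c),\ v = 0 \text{ on } \Gamma\}$ combined with $\overline{A}_1(u,u) \geq \tfrac{1}{2} \|\nabla u\|_{L^2(D \cup D_c)}^2$ gives a constant $\gamma > 0$ (depending only on the geometry) such that $\overline{A}_1(u_m,u_m) \geq \gamma \|u_m\|^2_{H^1(D \cup D_c)}$. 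Dropping the non-negative term $\widecheck{A}_{k_\eps}(v_m,v_m)$, using $\overline{k} \geq c_- > 0$ and Cauchy-Schwarz on the right-hand side (with $|\overline{A}_1(u,v)| \leq \|u\|_{H^1} \|v\|_{H^1}$), I arrive at the recursion
$$\|u_m\|_{H^1(D \cup D_c)} \leq \frac{m}{c_-\,\gamma} \, \|u_{m-1}\|_{H^1(D \cup D_c)}.$$
The base case $m = 0$ is handled as in Section~\ref{sec:bounds}: the energy comparison ${\cal E}(u_0, v_0) \leq {\cal E}(x_1, x_1)$ for the minimization problem~\eqref{eq:pb_min_H_enriched}, divided by $\overline{k} \geq c_- > 0$, produces a bound $\|u_0\|_{H^1(D \cup D_c)} \leq C_0$ uniform in $\eps$, $H$, and $\overline{k} \in {\cal K}$. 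Iterating the recursion gives uniform bounds $\|u_m\|_{H^1(D \cup D_c)} \leq \widetilde{C}_m$ for every $m \geq 0$.

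The conclusion is reached by applying Leibniz to~\eqref{eq:def_J_no-h}:
$$\frac{\partial^m J_{\eps, H}}{\partial \overline{k}^m}(\overline{k}) = \sum_{k=0}^{m} \binom{m}{k} \int_{D \cup D_c} \left(\frac{\partial^k (\nabla u_0 - e_1)}{\partial \overline{k}^k}\right) \cdot \left(\frac{\partial^{m-k} (\nabla u_0 - e_1)}{\partial \overline{k}^{m-k}}\right),$$
where each factor is $\nabla u_j$ for $j \geq 1$ and $\nabla u_0 - e_1$ for $j = 0$. Cauchy-Schwarz together with the uniform bounds $\|u_j\|_{H^1(D \cup D_c)} \leq \widetilde{C}_j$ (and $\|\nabla u_0 - e_1\|_{L^2} \leq \widetilde{C}_0 + \sqrt{|D \cup D_c|}$) then yield~\eqref{eq:bound_J_m}.

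The hard part will be to avoid relying on any discrete inf-sup estimate for the pair $(V_H^0, W_H^{\rm enrich})$: a Brezzi-type bound on the multiplier $p_m$ would involve an inf-sup constant that is not obviously uniform in $H$. The key point in the scheme above is that the specific triple test $(u_m, v_m, p_m)$, combined with the constraint line, eliminates $p_m$ from the energy identity altogether, so the induction runs purely on the $H^1(D \cup D_c)$-norms of the $u_m$'s, with constants depending only on the geometry, on the uniform coercivity constant $c_1$, and on the lower bound $c_- > 0$ of ${\cal K}$.
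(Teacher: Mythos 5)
Your proposal is correct and follows essentially the same route as the paper: differentiate the saddle-point system~\eqref{eq:system_k_star} with respect to $\overline{k}$, test with the differentiated triple so that the constraint line cancels the Lagrange-multiplier terms, obtain a recursion bounding $\|\nabla \overline{u}^{H,m}_{\overline{k},k_\eps}\|_{L^2(D\cup D_c)}$ by the previous derivative (the paper writes this out for $m=1,2$ with the factors $2/\overline{k}$ and $4/\overline{k}$, matching your $-m\,\overline{A}_1(u_{m-1},\cdot)$ right-hand side), start from the $m=0$ energy-comparison bound, and conclude by Leibniz and Cauchy--Schwarz applied to $J_{\eps,H}$. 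The only difference is that you treat general $m$ in closed form where the paper argues for $m=0,1,2$ and says the higher orders are handled likewise.
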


\begin{proof}
We first show~\eqref{eq:bound_J_m} for $m=0$, i.e. on the function $J_{\eps,H}$ itself. We recall that~\eqref{eq:system_k_star} is the Euler-Lagrange equation of the following minimization problem (which is exactly~\eqref{eq:pb_min_H_enriched} with $h=0$):
$$
\inf \left\{ \begin{array}{c} {\cal E}(\overline{u}^H,\widecheck{u}_\eps), \quad \overline{u}^H \in V_H, \quad \overline{u}^H(x) = x_1 \ \text{on $\Gamma$}, \\ \noalign{\vskip 2pt} \widecheck{u}_\eps \in H^1(D_c \cup D_f), \qquad {\cal C}(\overline{u}^H-\widecheck{u}_\eps,\phi^H) = 0 \ \ \text{for any $\phi^H \in W_H^{\rm enrich}$} \end{array} \right\},
$$
where the energy ${\cal E}$ and the constraint function ${\cal C}$ are defined by~\eqref{eq:def_E} and~\eqref{eq:def_C}. Making the choice $\overline{u}^H(x) = x_1$ in $D \cup D_c$ and $\widecheck{u}_\eps(x) = x_1$ in $D_c \cup D_f$, we obtain
$$
{\cal E}(\overline{u}^H_{\overline{k}, k_\eps},\widecheck{u}_{\eps,\overline{k}, k_\eps}) \leq {\cal E}\left(\overline{u}^H(x) = x_1, \widecheck{u}_\eps(x) = x_1\right),
$$
and therefore
\begin{multline*} 
\int_D \overline{k} \, \left| \nabla \overline{u}^H_{\overline{k}, k_\eps}\right|^2 + \frac{1}{2} \int_{D_c} \Big( \overline{k} \, \left|\nabla \overline{u}^H_{\overline{k}, k_\eps}\right|^2 + k_\eps \, \nabla \widecheck{u}_{\eps, \overline{k}, k_\eps} \cdot \nabla \widecheck{u}_{\eps, \overline{k}, k_\eps} \Big) \\ + \int_{D_f} k_\eps \, \nabla \widecheck{u}_{\eps, \overline{k}, k_\eps} \cdot \nabla \widecheck{u}_{\eps, \overline{k}, k_\eps} \leq \overline{k} \, | D | + \frac{\overline{k}}{2} \, | D_c | + \frac{1}{2} \int_{D_c} k_\eps e_1 \cdot e_1 + \int_{D_f} k_\eps e_1 \cdot e_1.
\end{multline*}
Using that $k_\eps$ is uniformly bounded and coercive and that we have chosen $\overline{k} \in {\cal K}$, we obtain
\begin{equation}\label{eq:bound_m_0}
\int_{D \cup D_c} \, |\nabla \overline{u}^H_{\overline{k}, k_\eps}|^2 \leq C_0,
\end{equation}
for some constant $C_0$ independent of $\eps$, $H$ and $\overline{k} \in {\cal K}$. This directly implies~\eqref{eq:bound_J_m} for $m=0$.

\medskip

We next show~\eqref{eq:bound_J_m} for $m=1$. We begin by writing that
\begin{equation} \label{eq:der_J_1}
\frac{\partial J_{\eps, H}}{\partial \overline{k}}(\overline{k}) = 2 \int_{D \cup D_c} \big( \nabla \overline{u}^H_{\overline{k}, k_\eps} - e_1 \big) \cdot \nabla \overline{u}^{H,1}_{\overline{k}, k_\eps}
\end{equation}
with $\dps \overline{u}^{H,1}_{\overline{k}, k_\eps} = \frac{\partial \overline{u}^H_{\overline{k}, k_\eps}}{\partial \overline{k}}$. To compute this derivative, we recall that $(\overline{u}^H_{\overline{k}, k_\eps},\widecheck{u}_{\eps,\overline{k}, k_\eps},\psi^H_{\overline{k}, k_\eps}) \in V_H^{\rm Dir BC} \times H^1(D_c \cup D_f) \times W_H^{\rm enrich}$ is the unique solution to the variational formulation~\eqref{eq:system_k_star}. Introducing $\dps \widecheck{u}^1_{\eps,\overline{k}, k_\eps} = \frac{\partial \widecheck{u}_{\eps,\overline{k}, k_\eps}}{\partial \overline{k}}$ and $\dps \psi^{H,1}_{\overline{k}, k_\eps} = \frac{\partial \psi^H_{\overline{k}, k_\eps}}{\partial \overline{k}}$ and differentiating~\eqref{eq:system_k_star} with respect to $\overline{k}$, we obtain that $\overline{u}^{H,1}_{\overline{k}, k_\eps} \in V_H^0$, $\widecheck{u}^1_{\eps,\overline{k}, k_\eps} \in H^1(D_c \cup D_f)$ and $\psi^{H,1}_{\overline{k}, k_\eps} \in W_H^{\rm enrich}$ are such that
\begin{equation}\label{eq:system_k_star_m_1}
\begin{cases}
  \forall \overline{v}^H \in V_H^0, \quad & \overline{A}_{\overline{k}}(\overline{u}^{H,1}_{\overline{k}, k_\eps},\overline{v}^H) + {\cal C}(\overline{v}^H,\psi^{H,1}_{\overline{k}, k_\eps}) = - \overline{A}_1(\overline{u}^H_{\overline{k}, k_\eps},\overline{v}^H),
  \\ \noalign{\vskip 3pt}
  \forall \widecheck{v} \in H^1(D_c \cup D_f), \quad & \widecheck{A}_{k_\eps}(\widecheck{u}^1_{\eps,\overline{k}, k_\eps},\widecheck{v}) - {\cal C}(\widecheck{v},\psi^{H,1}_{\overline{k}, k_\eps}) = 0,
  \\ \noalign{\vskip 3pt}
  \forall \phi^H \in W_H^{\rm enrich}, \quad & {\cal C}(\overline{u}^{H,1}_{\overline{k}, k_\eps} - \widecheck{u}^1_{\eps,\overline{k}, k_\eps},\phi^H) = 0,
\end{cases}
\end{equation}
where the bilinear form $\overline{A}_1$, defined by~\eqref{eq:def_overline_A_1}, reads
$$
\overline{A}_1(\overline{u},\overline{v}) = \int_D \nabla \overline{u}(x) \cdot \nabla \overline{v}(x) + \frac{1}{2} \int_{D_c} \nabla \overline{u}(x) \cdot \nabla \overline{v}(x).
$$
Taking $\overline{v}^H = \overline{u}^{H,1}_{\overline{k}, k_\eps}$ and $\widecheck{v} = \widecheck{u}^1_{\eps,\overline{k}, k_\eps}$ in~\eqref{eq:system_k_star_m_1}, adding the first two lines and using the third line with $\phi^H = \psi^{H,1}_{\overline{k}, k_\eps}$, we get
$$
\overline{A}_{\overline{k}}(\overline{u}^{H,1}_{\overline{k}, k_\eps},\overline{u}^{H,1}_{\overline{k}, k_\eps}) + \widecheck{A}_{k_\eps}(\widecheck{u}^1_{\eps,\overline{k}, k_\eps},\widecheck{u}^1_{\eps,\overline{k}, k_\eps}) = - \overline{A}_1(\overline{u}^H_{\overline{k}, k_\eps},\overline{u}^{H,1}_{\overline{k}, k_\eps}),
$$  
and hence
$$
\frac{\overline{k}}{2} \, \| \nabla \overline{u}^{H,1}_{\overline{k}, k_\eps} \|^2_{L^2(D \cup D_c)} \leq \| \nabla \overline{u}^H_{\overline{k}, k_\eps} \|_{L^2(D \cup D_c)} \, \| \nabla \overline{u}^{H,1}_{\overline{k}, k_\eps} \|_{L^2(D \cup D_c)},
$$
which implies
\begin{equation} \label{eq:bound_1_0}
\| \nabla \overline{u}^{H,1}_{\overline{k}, k_\eps} \|_{L^2(D \cup D_c)} \leq \frac{2}{\overline{k}} \, \| \nabla \overline{u}^H_{\overline{k}, k_\eps} \|_{L^2(D \cup D_c)}.
\end{equation}
Collecting this bound with~\eqref{eq:bound_m_0}, we deduce that
\begin{equation}\label{eq:bound_m_1}
\| \nabla \overline{u}^{H,1}_{\overline{k}, k_\eps} \|_{L^2(D \cup D_c)} \leq C_1,
\end{equation}
for some constant $C_1$ independent of $\eps$, $H$ and $\overline{k} \in {\cal K}$. Collecting~\eqref{eq:der_J_1}, \eqref{eq:bound_m_0} and~\eqref{eq:bound_m_1}, we infer~\eqref{eq:bound_J_m} for $m=1$.

\medskip

To proceed with higher-order derivatives, we again differentiate~\eqref{eq:system_k_star_m_1} with respect to $\overline{k}$. We observe that $\dps \overline{u}^{H,2}_{\overline{k}, k_\eps} = \frac{\partial^2 \overline{u}^H_{\overline{k}, k_\eps}}{\partial \overline{k}^2} \in V_H^0$, $\dps \widecheck{u}^2_{\eps,\overline{k}, k_\eps} = \frac{\partial^2 \widecheck{u}_{\eps,\overline{k}, k_\eps}}{\partial \overline{k}^2} \in H^1(D_c \cup D_f)$ and $\dps \psi^{H,2}_{\overline{k}, k_\eps} = \frac{\partial^2 \psi^H_{\overline{k}, k_\eps}}{\partial \overline{k}^2} \in W_H^{\rm enrich}$ are such that
$$
\begin{cases}
  \forall \overline{v}^H \in V_H^0, \quad & \overline{A}_{\overline{k}}(\overline{u}^{H,2}_{\overline{k}, k_\eps},\overline{v}^H) + {\cal C}(\overline{v}^H,\psi^{H,2}_{\overline{k}, k_\eps}) = - 2 \, \overline{A}_1(\overline{u}^{H,1}_{\overline{k}, k_\eps},\overline{v}^H),
  \\ \noalign{\vskip 3pt}
  \forall \widecheck{v} \in H^1(D_c \cup D_f), \quad & \widecheck{A}_{k_\eps}(\widecheck{u}^2_{\eps,\overline{k}, k_\eps},\widecheck{v}) - {\cal C}(\widecheck{v},\psi^{H,2}_{\overline{k}, k_\eps}) = 0,
  \\ \noalign{\vskip 3pt}
  \forall \phi^H \in W_H^{\rm enrich}, \quad & {\cal C}(\overline{u}^{H,2}_{\overline{k}, k_\eps} - \widecheck{u}^2_{\eps,\overline{k}, k_\eps},\phi^H) = 0.
\end{cases}
$$
In a similar fashion as~\eqref{eq:bound_1_0}, we deduce that
$$
\| \nabla \overline{u}^{H,2}_{\overline{k}, k_\eps} \|_{L^2(D \cup D_c)} \leq \frac{4}{\overline{k}} \, \| \nabla \overline{u}^{H,1}_{\overline{k}, k_\eps} \|_{L^2(D \cup D_c)},
$$
and thus, in view of~\eqref{eq:bound_m_1}, that
\begin{equation}\label{eq:bound_m_2}
\| \nabla \overline{u}^{H,2}_{\overline{k}, k_\eps} \|_{L^2(D \cup D_c)} \leq C_2,
\end{equation}
for some constant $C_2$ independent of $\eps$, $H$ and $\overline{k} \in {\cal K}$. Diffentiating~\eqref{eq:der_J_1} yields
\begin{equation} \label{eq:der_J_2}
\frac{\partial^2 J_{\eps, H}}{\partial \overline{k}^2}(\overline{k}) = 2 \int_{D \cup D_c} \big( \nabla \overline{u}^H_{\overline{k}, k_\eps} - e_1 \big) \cdot \nabla \overline{u}^{H,2}_{\overline{k}, k_\eps} + 2 \int_{D \cup D_c} \big| \nabla \overline{u}^{H,1}_{\overline{k}, k_\eps} \big|^2.
\end{equation}
Inserting~\eqref{eq:bound_m_0}, \eqref{eq:bound_m_1} and~\eqref{eq:bound_m_2}, we deduce~\eqref{eq:bound_J_m} for $m=2$. We can of course proceed likewise for larger values of $m$. This concludes the proof of Lemma~\ref{lem:bound_der_J}.
\end{proof}

For each $\eps>0$, and under assumptions~\eqref{eq:J_contradiction} and~\eqref{eq:J_contradiction2}, we know from Theorem~\ref{th:optimization} that there exists at least one optimal constant coefficient $\overline{k}^{\rm opt}_\eps$ minimizing~\eqref{eq:optim_J}. In the limit $\eps \to 0$, assumptions~\eqref{eq:J_contradiction} and~\eqref{eq:J_contradiction2} are satisfied, and we have shown in Theorem~\ref{th:homogenization} that this optimal coefficient converges to $k^\star$. We now show a uniqueness result for small enough $\eps$. As above, we directly take the limit $h \to 0$ and consider minimizers $\overline{k}_\eps^{\rm opt}(H)$ of~\eqref{eq:optim_J_no-h}.

\begin{theorem}\label{th:uniqueness}
Let $k_\eps$ be given by~\eqref{eq:structure-k} for some fixed periodic coefficient $k_{\rm per}$ that satisfies the classical boundedness and coercivity conditions~\eqref{eq:boundedness+coercivity}. We make the regularity assumption~\eqref{eq:holder_continuity} and the geometric assumption~\eqref{eq:boundaries}, and again formally set $h=0$. 

Then, for any $H>0$, there exists some $\eps_0(H)>0$ such that, for any $\eps \leq \eps_0(H)$, there exists a {\em unique} optimal coefficient $\overline{k}_\eps^{\rm opt}(H)$ to the optimization problem~\eqref{eq:optim_J_no-h}. 
\end{theorem}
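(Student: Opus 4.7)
The plan is to argue by contradiction. Suppose that for some sequence $\eps_n \to 0$, problem~\eqref{eq:optim_J_no-h} admits two distinct minimizers $\overline{k}^1_{\eps_n} < \overline{k}^2_{\eps_n}$. By Lemma~\ref{lem:conv_unif}, both converge uniformly to $k^\star$, so for any neighbourhood ${\cal K} = (k^\star - \eta, k^\star + \eta) \subset (0,\infty)$ of $k^\star$, both minimizers lie in ${\cal K}$ for $n$ large. Since $J_{\eps, H}$ is smooth on ${\cal K}$ by Lemma~\ref{lem:bound_der_J}, with $J'_{\eps_n, H}$ vanishing at each interior minimizer, Rolle's theorem applied to $J'_{\eps_n, H}$ on $[\overline{k}^1_{\eps_n}, \overline{k}^2_{\eps_n}]$ produces $\xi_n \in (\overline{k}^1_{\eps_n}, \overline{k}^2_{\eps_n})$ with $J''_{\eps_n, H}(\xi_n) = 0$. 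It thus suffices to establish the uniform strict positivity: there exist $\eta > 0$, $c_0 > 0$ and $\eps_1 > 0$ such that $J''_{\eps, H}(\overline{k}) \geq c_0$ for all $\eps \leq \eps_1$ and all $\overline{k} \in (k^\star - \eta, k^\star + \eta)$, which will rule out the critical point $\xi_n$ and yield the contradiction.

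For this lower bound, the expression~\eqref{eq:der_J_2} splits $J''_{\eps, H}(\overline{k})$ into a cross term and the ``good'' term $2 \| \nabla \overline{u}^{H,1}_{\overline{k}, k_\eps} \|^2_{L^2(D \cup D_c)}$. The cross term is controlled by $2 C_2 \sqrt{J_{\eps, H}(\overline{k})}$ via Cauchy--Schwarz combined with~\eqref{eq:bound_m_2}; since $J_{\eps, H}(\overline{k}) \leq J_{\eps, H}(k^\star) + C_1 \eta$ by the mean value theorem applied to Lemma~\ref{lem:bound_der_J} ($m=1$) and $J_{\eps, H}(k^\star) \to 0$ by~\eqref{eq:limit_contradiction}, the cross term becomes arbitrarily small in the regime $\eta$ small and $\eps$ small. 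The whole argument therefore reduces to showing that $\| \nabla \overline{u}^{H,1}_{\overline{k}, k_\eps} \|^2_{L^2(D \cup D_c)}$ is bounded below by a positive constant, uniformly for $\overline{k}$ in a neighbourhood of $k^\star$ and $\eps$ small enough.

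The main obstacle is precisely this uniform lower bound. The first step is to pass to the limit $\eps \to 0$ in the sensitivity system~\eqref{eq:system_k_star_m_1} at $\overline{k} = k^\star$, via an analogue of Corollary~\ref{coro:multiplication} applied to the first-order derivatives, and thus identify the limit $\overline{u}^{H,1}_{k^\star, k^\star} \in V_H^0$ as the unique solution of a limit Arlequin-like system involving $\psi_0$ as a source. Since $\overline{u}^{H,1}_{k^\star, k^\star}$ vanishes on $\Gamma$, a non-zero gradient is equivalent to $\overline{u}^{H,1}_{k^\star, k^\star} \not\equiv 0$, so it suffices to rule out $\overline{u}^{H,1}_{k^\star, k^\star} \equiv 0$. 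I would do this by contradiction, mirroring the strategy of the proof of Lemma~\ref{lemma:optimization}: assuming $\overline{u}^{H,1}_{k^\star, k^\star} \equiv 0$, the three equations of the differentiated system combined with the variational characterization~\eqref{eq:LMvariational} of $\psi_0$ force the Lagrange multiplier correction $\psi^{H,1}_{k^\star, k^\star} - \psi_0 \in W_H^{\rm enrich}$ into a very constrained structure; unwinding this along the lines of the last paragraph of the proof of Lemma~\ref{lemma:optimization} would require the two boundary integrals $\int_{\Gamma_c} (e_1 \cdot n_{\Gamma_c}) \varphi^H$ and $\int_{\Gamma_f} (e_1 \cdot n_{\Gamma_f}) \varphi^H$ to vanish simultaneously for all $\varphi^H \in W_H$---exactly the non-degeneracy fact already exploited there to deduce $\psi_0 \notin W_H$. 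Once strict positivity is secured at the limit point $(\overline{k}, \eps) = (k^\star, 0)$, the Lipschitz bound on $J''_{\eps, H}$ from Lemma~\ref{lem:bound_der_J} ($m=3$), together with continuity with respect to $\overline{k}$ and the homogenization convergence in $\eps$, propagates it to the desired uniform lower bound on ${\cal K}$ and closes the argument.
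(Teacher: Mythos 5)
Your proposal is correct and follows essentially the same route as the paper's proof: it rests on the same three pillars---the uniform localization of minimizers near $k^\star$ (Lemma~\ref{lem:conv_unif}), the uniform bounds on derivatives of $J_{\eps,H}$ (Lemma~\ref{lem:bound_der_J}), and the non-degeneracy $\nabla\overline{u}^{H,1}_{k^\star,k^\star}\not\equiv 0$, which you propose to establish by mirroring the argument of Lemma~\ref{lemma:optimization}, exactly as the paper does in its Step~2. The only difference is cosmetic: you close the argument via Rolle's theorem applied to $J'_{\eps,H}$ combined with a uniform strict lower bound on $J''_{\eps,H}$ over a neighbourhood of $k^\star$, while the paper instead Taylor-expands $J_{\eps,H}$ around one particular minimizer, evaluates $J''$ there, and controls the remainder with the $m=3$ bound; both variants ultimately rely on the identical positivity estimate for the second derivative near $k^\star$.
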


\begin{proof}
We start by choosing some neighborhood ${\cal K} = (c_-,c_+)$ of $k^\star$ with $c_->0$ (say ${\cal K} = (k^\star/2,2 \, k^\star)$) and we know, in view of Lemma~\ref{lem:conv_unif}, that there exists some $\eps_0(H) > 0$ such that, for any $\eps \leq \eps_0(H)$, all the optimal coefficients $\overline{k}_\eps^{\rm opt}(H)$ (the existence of at least one of those has been shown in Theorem~\ref{th:optimization}) belong to ${\cal K}$. To make notations lighter, we keep the dependency of $\overline{k}_\eps^{\rm opt}(H)$ with respect to $H$ implicit, and thus denote these optimal coefficients by $\overline{k}_\eps^{\rm opt}$. The proof falls in three steps.

\medskip

\noindent {\bf Step 1.} In view of~\eqref{eq:der_J_2}, we have
\begin{equation} \label{eq:der_J_2_opt}
\frac{\partial^2 J_{\eps, H}}{\partial \overline{k}^2}(\overline{k}_\eps^{\rm opt}) = 2 \int_{D \cup D_c} \big( \nabla \overline{u}^H_{\overline{k}_\eps^{\rm opt}, k_\eps} - e_1 \big) \cdot \nabla \overline{u}^{H,2}_{\overline{k}_\eps^{\rm opt}, k_\eps} + 2 \int_{D \cup D_c} \big| \nabla \overline{u}^{H,1}_{\overline{k}_\eps^{\rm opt}, k_\eps} \big|^2.
\end{equation}
We are going to pass to the limit $\eps \to 0$ in that equation.

We have shown in Section~\ref{sec:k_opt} that $\overline{u}^H_{\overline{k}_\eps^{\rm opt}, k_\eps}$ converges (strongly in $H^1(D \cup D_c)$) to $\overline{u}^H_\star$ with $\overline{u}^H_\star(x) = x_1$. 

We now study the limit of $\overline{u}^{H,1}_{\overline{k}_\eps^{\rm opt}, k_\eps}$ when $\eps \to 0$. In view of~\eqref{eq:bound_m_1} and the fact that $\overline{u}^{H,1}_{\overline{k}_\eps^{\rm opt}, k_\eps}(x) = 0$ on $\Gamma$, we see that $\overline{u}^{H,1}_{\overline{k}_\eps^{\rm opt}, k_\eps}$ is bounded in $H^1(D \cup D_c)$. In view of~\eqref{eq:system_k_star_m_1} (written for $\overline{k} = \overline{k}_\eps^{\rm opt}$) and proceeding as in Section~\ref{sec:bounds}, we show that the sequence $\widecheck{u}^1_{\eps,\overline{k}^{\rm opt}_\eps, k_\eps}$ (resp. the sequence $\psi^{H,1}_{\overline{k}^{\rm opt}_\eps, k_\eps}$) is bounded in $H^1(D_c \cup D_f)$ (resp. in $H^1(D_c)$), independently of $\eps$.
We are then in position to use Corollary~\ref{coro:multiplication} and obtain that the limit $(\overline{u}^{H,1}_\star,\widecheck{u}^1_\star,\psi^{H,1}_\star)$ of the solution $(\overline{u}^{H,1}_{\overline{k}^{\rm opt}_\eps, k_\eps},\widecheck{u}^1_{\eps,\overline{k}^{\rm opt}_\eps, k_\eps},\psi^{H,1}_{\overline{k}^{\rm opt}_\eps, k_\eps})$ to~\eqref{eq:system_k_star_m_1} is actually the solution to the following system: find $\overline{u}^{H,1}_{k^\star,k^\star} \in V_H^0$, $\widecheck{u}^1_{k^\star,k^\star} \in H^1(D_c \cup D_f)$ and $\psi^{H,1}_{k^\star,k^\star} \in W_H^{\rm enrich}$ such that
\begin{equation} \label{eq:arlequin_limit2_m_1}
  \begin{cases}
  \forall \overline{v}^H \in V_H^0, \quad & \overline{A}_{k^\star}(\overline{u}^{H,1}_{k^\star,k^\star},\overline{v}^H) + {\cal C}(\overline{v}^H,\psi^{H,1}_{k^\star,k^\star}) = - \overline{A}_1(\overline{u}^H_\star,\overline{v}^H),
  \\ \noalign{\vskip 3pt}
  \forall \widecheck{v} \in H^1(D_c \cup D_f), \quad & \widecheck{A}_{k^\star}(\widecheck{u}^1_{k^\star,k^\star},\widecheck{v}) - {\cal C}(\widecheck{v},\psi^{H,1}_{k^\star,k^\star}) = 0,
  \\ \noalign{\vskip 3pt}
  \forall \phi^H \in W_H^{\rm enrich}, \quad & {\cal C}(\overline{u}^{H,1}_{k^\star,k^\star} - \widecheck{u}^1_{k^\star,k^\star},\phi^H) = 0.
  \end{cases}
\end{equation}
This result will be useful for Step~2 below.

Turning next to $\overline{u}^{H,2}_{\overline{k}_\eps^{\rm opt}, k_\eps}$, we infer from~\eqref{eq:bound_m_2} and the fact that $\overline{u}^{H,2}_{\overline{k}_\eps^{\rm opt}, k_\eps}(x) = 0$ on $\Gamma$ that $\overline{u}^{H,2}_{\overline{k}_\eps^{\rm opt}, k_\eps}$ is bounded in $H^1(D \cup D_c)$. Since it belongs to the finite dimensional space $V_H^0$, it converges to some $\overline{u}^{H,2}_\star$.

We are then in position to pass to the limit $\eps \to 0$ in~\eqref{eq:der_J_2_opt}, and obtain that
\begin{align}
  \lim_{\eps \to 0} \frac{\partial^2 J_{\eps, H}}{\partial \overline{k}^2}(\overline{k}_\eps^{\rm opt})
  &=
  2 \int_{D \cup D_c} \big( \nabla \overline{u}^H_\star - e_1 \big) \cdot \nabla \overline{u}^{H,2}_\star + 2 \int_{D \cup D_c} \big| \nabla \overline{u}^{H,1}_{k^\star,k^\star} \big|^2
  \nonumber
  \\
  &=
  2 \int_{D \cup D_c} \big| \nabla \overline{u}^{H,1}_{k^\star,k^\star} \big|^2,
   \label{eq:der_J_2_opt_lim}
\end{align}
where we have used that $\overline{u}^H_\star(x) = x_1$.

\medskip

\noindent {\bf Step 2.} We now show by contradiction that $\nabla \overline{u}^{H,1}_{k^\star,k^\star}$ does not identically vanish in $D \cup D_c$. If this is the case, using that $\overline{u}^{H,1}_{k^\star,k^\star}$ vanishes on $\Gamma$, we obtain that $\overline{u}^{H,1}_{k^\star,k^\star} = 0$ in $D \cup D_c$. We also infer from the first line of~\eqref{eq:arlequin_limit2_m_1} that
$$
\forall \overline{v}^H \in V_H^0, \quad \overline{A}_1(x_1,\overline{v}^H) + {\cal C}(\overline{v}^H,\psi^{H,1}_{k^\star,k^\star}) = 0.
$$
Arguing as in the proof of Lemma~\ref{lemma:optimization} (see~\eqref{eq:first_line} and~\eqref{eq:psi_decomposition}), we get that
$$
\psi^{H,1}_{k^\star,k^\star} = \tau \, \psi_0 + (1 - \tau) \, \Pi_H(\psi_0),
$$
where we recall that the projection $\Pi_H$ is defined by~\eqref{eq:Pih_a} and where the constant $\tau$ will be determined later. The second line of~\eqref{eq:arlequin_limit2_m_1} yields (compare with~\eqref{eq:u_star}) that
$$
\widecheck{u}^1_{k^\star,k^\star} = \lambda + \frac{\tau}{k^\star} \, \widecheck{u}_1 + \frac{1 - \tau}{k^\star} \, \widecheck{u}_2,
$$
where $\lambda \in \RR$ is an arbitrary constant and where we recall that $\widecheck{u}_1$ and $\widecheck{u}_2$ are defined by~\eqref{eq:u1_u2}.

Using $\phi^H = 1$ in the third line of~\eqref{eq:arlequin_limit2_m_1} and that $\overline{u}^{H,1}_{k^\star,k^\star} = 0$ implies that $\lambda = 0$. More generally, the third line of~\eqref{eq:arlequin_limit2_m_1} implies that
$$
\forall \phi^H \in W_H^{\rm enrich}, \quad {\cal C}\left(\frac{\tau}{k^\star} \, \widecheck{u}_1 + \frac{1 - \tau}{k^\star} \, \widecheck{u}_2, \phi^H \right) = 0,
$$
and thus
$$
\Pi^{\rm enrich}_H \left[ \frac{\tau}{k^\star} \, \widecheck{u}_1 + \frac{1 - \tau}{k^\star} \, \widecheck{u}_2 \right] = 0.
$$
Recalling that $\widecheck{u}_1(x) = x_1$ and thus that $\widecheck{u}_1 \in W_H^{\rm enrich}$, we deduce that
$$
\frac{\tau}{k^\star} \, \widecheck{u}_1 + \frac{1 - \tau}{k^\star} \, \Pi^{\rm enrich}_H \left( \widecheck{u}_2 \right) = 0.
$$
We have shown in the proof of Lemma~\ref{lemma:optimization} that $\widecheck{u}_1$ and $\Pi^{\rm enrich}_H \left( \widecheck{u}_2 \right)$ are linearly independent. This implies that $\dps \frac{\tau}{k^\star} = \frac{1 - \tau}{k^\star} = 0$, which leads to a contradiction.

\medskip

\noindent {\bf Step 3.} Writing the Taylor expansion of $J_{\eps, H}$ around one particular minimizer, that we denote $\overline{k}_\eps^{{\rm opt},1}$ (and which belongs to ${\cal K}$, see the beginning of the proof), we obtain that, for any $\overline{k} \in {\cal K}$, there exists some $\widetilde{k} \in {\cal K}$ such that
\begin{multline} \label{eq:nounou15}
J_{\eps, H}(\overline{k}) = J_{\eps, H}(\overline{k}_\eps^{{\rm opt},1}) + \frac{(\overline{k}-\overline{k}_\eps^{{\rm opt},1})^2}{2} \, \frac{\partial^2 J_{\eps,H}}{\partial \overline{k}^2}(\overline{k}_\eps^{{\rm opt},1}) \\ + \frac{(\overline{k}-\overline{k}_\eps^{{\rm opt},1})^3}{6} \, \frac{\partial^3 J_{\eps,H}}{\partial \overline{k}^3}(\widetilde{k}),
\end{multline}
where we have used the fact that $\dps \frac{\partial J_{\eps,H}}{\partial \overline{k}}(\overline{k}_\eps^{{\rm opt},1}) = 0$ since $\overline{k}_\eps^{{\rm opt},1}$ is a minimizer of $J_{\eps,H}$ in the open set $(0,\infty)$.

Using Lemma~\ref{lem:bound_der_J}, we know that there exists $C_3$ such that $\dps \left| \frac{\partial^3 J_{\eps,H}}{\partial \overline{k}^3}(\overline{k}) \right| \leq C_3$ for any $\eps$ and any $\overline{k} \in {\cal K}$. In view of~\eqref{eq:der_J_2_opt_lim} and of the fact that $\nabla \overline{u}^{H,1}_{k^\star,k^\star}$ does not identically vanish in $D \cup D_c$, and up to choosing a smaller value for $\eps_0(H)$ (that we initially chose, we recall, at the beginning of the proof), we know that, for any $\eps \leq \eps_0(H)$, we have 
$$
\frac{\partial^2 J_{\eps, H}}{\partial \overline{k}^2}(\overline{k}_\eps^{{\rm opt},1})
\geq
\int_{D \cup D_c} \big| \nabla \overline{u}^{H,1}_{k^\star,k^\star} \big|^2
> 0.
$$
We now choose some $\eta > 0$ such that $(k^\star-\eta,k^\star+\eta) \subset {\cal K}$ and such that
\begin{equation} \label{eq:nounou16}
\frac{1}{2} \int_{D \cup D_c} \big| \nabla \overline{u}^{H,1}_{k^\star,k^\star} \big|^2 - \frac{\eta}{3} \, C_3 > 0.
\end{equation}
Using again Lemma~\ref{lem:conv_unif}, we obtain that, upon choosing a smaller value for $\eps_0(H)$, all the minimizers of $J_{\eps,H}$ (and thus in particular $\overline{k}_\eps^{{\rm opt},1}$) belong to $(k^\star-\eta,k^\star+\eta)$ for any $\eps \leq \eps_0(H)$. Using~\eqref{eq:nounou15}, we see that, for any $\overline{k} \in (k^\star-\eta,k^\star+\eta)$,
\begin{align*}
  & J_{\eps, H}(\overline{k}) - J_{\eps, H}(\overline{k}_\eps^{{\rm opt},1})
  \\
  &=
  (\overline{k}-\overline{k}_\eps^{{\rm opt},1})^2 \left( \frac{1}{2} \, \frac{\partial^2 J_{\eps,H}}{\partial \overline{k}^2}(\overline{k}_\eps^{{\rm opt},1}) + \frac{\overline{k}-\overline{k}_\eps^{{\rm opt},1}}{6} \, \frac{\partial^3 J_{\eps,H}}{\partial \overline{k}^3}(\widetilde{k}) \right)
  \\
  &\geq
  (\overline{k}-\overline{k}_\eps^{{\rm opt},1})^2 \left( \frac{1}{2} \int_{D \cup D_c} \big| \nabla \overline{u}^{H,1}_{k^\star,k^\star} \big|^2 - \frac{2 \eta}{6} \, C_3 \right),
\end{align*}
where we have used that $|\overline{k}-\overline{k}_\eps^{{\rm opt},1}| \leq |\overline{k}-k^\star| + |k^\star-\overline{k}_\eps^{{\rm opt},1}| \leq 2 \eta$. In view of our choice~\eqref{eq:nounou16} of $\eta$, we obtain that the unique minimizer of $J_{\eps,H}$ in $(k^\star-\eta,k^\star+\eta)$ is $\overline{k}_\eps^{{\rm opt},1}$. We have also pointed out above that {\em all the minimizers} of $J_{\eps,H}$ belong to $(k^\star-\eta,k^\star+\eta)$. This shows that $J_{\eps,H}$ has a unique minimizer when $\eps \leq \eps_0(H)$. This concludes the proof of Theorem~\ref{th:uniqueness}.
\end{proof}

\appendix

\section{Matrix-valued coefficients}\label{seq:matrix}

We now consider the case when the homogenized coefficient $k^\star$, and therefore the constant coefficient $\overline{k}$ upon which we optimize, is matrix-valued and symmetric: $\overline{k} = \begin{bmatrix} \overline{k}_{11} & \overline{k}_{12} \\ \overline{k}_{21} & \overline{k}_{22} \end{bmatrix}$ with $\overline{k}_{12} = \overline{k}_{21}$. Similarly to the scalar-case, we are going to show that the corresponding minimization problem (analogous to~\eqref{eq:optim_J}, and that we introduce below, see~\eqref{eq:optim_J_matrix}) based on the solution to the problem coupling the heterogeneous coefficient $k_\eps$ with the constant coefficient $\overline{k}$ yields an optimal value $\overline{k}^{\rm opt}_\eps$ which itself converges (in some sense made precise below) to $k^\star$ when $\eps \to 0$.

\medskip

This appendix is organized as follows. We first revisit below (and adjust) the definition of the enriched Lagrange multiplier space, in the current matrix-valued case. Next, in Section~\ref{sec:opt_matrix}, we also revisit and adjust the optimization strategy aiming at computing an approximation of the homogenized coefficient $k^\star$ associated to the highly oscillatory coefficient $k_\eps$. The main difference with the scalar case is that we enforce in the optimization problem some a priori lower and upper bounds on $\overline{k}$, for reasons discussed below. In the following Section~\ref{sec:tech_lemmas_matrix}, we establish a result for homogeneous materials which is the equivalent of Lemma~\ref{lemma:optimization} for the matrix-valued case (see Lemma~\ref{lemma:optimization_matrix}). This result is critical to prove that the method indeed converges to the homogenized coefficient. The existence of a minimizer $\overline{k}^{\rm opt}_\eps$ to the optimization problem~\eqref{eq:optim_J_matrix} (for a fixed value of $\eps$) is investigated in Section~\ref{sec:well_matrix} (see Theorem~\ref{th:optimization_matrix}). We eventually consider the limit $\eps \to 0$ and show that some components of the optimal matrix $\overline{k}^{\rm opt}_\eps$ indeed converge to the corresponding components of the homogenized matrix $k^\star$: this is our main result in the matrix-valued case, Theorem~\ref{th:homogenization_matrix} in Section~\ref{sec:homogenization_matrix}. The homogenized matrix $k^\star$ can then be completely determined by considering several optimization problems, as explained in Remark~\ref{matrix_dirichlet_x_2}.

\medskip

We begin this appendix by first highlighting that the enriched space for the Lagrange multiplier should be appropriately defined in this matrix-case. To that aim, we proceed as in the scalar case and consider again the solution $(\overline{u}^H,\widecheck{u}^h_\eps,\psi^H)$ to~\eqref{eq:arlequin0}, assume that $\overline{k} = k^\star$, and take the limit $h \to 0$, $\eps \to 0$ and $H \to 0$. Then the Lagrange multiplier may be explicitly determined (see~\cite[Section~3.1]{ref_olga_comp}): it is the solution to
$$
\begin{cases}
  - \Delta \psi + \psi = 0 \quad \text{in $D_c$},
  \\ \noalign{\vskip 3pt}
  \dps \nabla \psi \cdot n_{\Gamma_c} = \frac{1}{2} \, (k^\star e_1) \cdot n_{\Gamma_c} \ \ \text{on $\Gamma_c$}, \qquad \nabla \psi \cdot n_{\Gamma_f} = -\frac{1}{2} \, (k^\star e_1) \cdot n_{\Gamma_f} \ \ \text{on $\Gamma_f$}.
\end{cases}
$$
We of course wish to enrich the Lagrange multiplier space by functions independent of $k^\star$. Using the linearity of the above problem, we introduce the solutions $\psi_{0,j}$, $j=1,2$, to
\begin{equation} \label{eq:psi_0j}
\! \! \begin{cases}
  - \Delta \psi_{0,j} + \psi_{0,j} = 0 \quad \text{in $D_c$},
  \\ \noalign{\vskip 3pt}
  \dps \nabla \psi_{0,j} \cdot n_{\Gamma_c} = \frac{1}{2} \, e_j \cdot n_{\Gamma_c} \ \ \text{on $\Gamma_c$}, \quad \nabla \psi_{0,j} \cdot n_{\Gamma_f} = -\frac{1}{2} \, e_j \cdot n_{\Gamma_f} \ \ \text{on $\Gamma_f$},
\end{cases}
\end{equation}
and we get that $\psi = (e_1^T k^\star e_1) \, \psi_{0,1} + (e_2^T k^\star e_1) \, \psi_{0,2}$. Note that the function $\psi_{0,1}$ is identical to the enrichment defined by~\eqref{eq:psi_0}.

In the spirit of~\eqref{eq:def_W_H_enrich}, we therefore enrich the classical finite element space $W_H$ and consider
\begin{equation} \label{eq:def_W_H_enrich_matrix}
W_H^{\rm enrich} = W_H + \text{Span $\psi_{0,1}$} + \text{Span $\psi_{0,2}$}
\end{equation}
instead of $W_H$. In practice, and similarly to the scalar-case, we use accurate finite element approximations of $\psi_{0,j}$, $j=1,2$. We again introduce the finite element space
$$
W_h = \left\{ \phi^h \in H^1(D_c), \quad \text{$\phi^h$ is piecewise affine on the fine mesh ${\cal T}_h$} \right\},
$$
and define $\psi_{0,j}^h \in W_h$ as the solution to the following variational formulation (compare with~\eqref{eq:psi_0_FV_h}):
$$
\forall \phi^h \in W_h, \qquad {\cal C}(\psi_{0,j}^h,\phi^h) = \frac{1}{2} \int_{\Gamma_c} (e_j \cdot n_{\Gamma_c}) \, \phi^h - \frac{1}{2} \int_{\Gamma_f} (e_j \cdot n_{\Gamma_f}) \, \phi^h.
$$
In what follows, we therefore consider the enriched space
\begin{equation} \label{eq:def_enriched_matrix}
W_{H,h}^{\rm enrich} = W_H + \text{Span $\psi_{0,1}^h$} + \text{Span $\psi_{0,2}^h$}
\end{equation}
instead of $W_H^{\rm enrich}$. We again discretize the coupled problem~\eqref{eq:pb_min} by~\eqref{eq:pb_min_H_enriched} (which corresponds to the variational formulation~\eqref{eq:arlequin}), where $W_{H,h}^{\rm enrich}$ is now given by~\eqref{eq:def_enriched_matrix}. As when $\overline{k}$ is scalar-valued, the system~\eqref{eq:arlequin} has a unique solution for any positive definite symmetric matrix $\overline{k}$.

\subsection{Optimization upon the coefficient $\overline{k}$} \label{sec:opt_matrix}

We start again from the optimality system~\eqref{eq:arlequin} of the minimization problem~\eqref{eq:pb_min_H_enriched}: find $\overline{u}^H \in V_H^{\rm Dir BC}$, $\widecheck{u}^h_\eps \in V_h$ and $\psi^H \in W_{H,h}^{\rm enrich}$ such that
\begin{equation} \label{eq:arlequin_matrix}
  \begin{cases}
  \forall \overline{v}^H \in V_H^0, \quad & \overline{A}_{\overline{k}}(\overline{u}^H,\overline{v}^H) + {\cal C}(\overline{v}^H,\psi^H) = 0,
  \\ \noalign{\vskip 3pt}
  \forall \widecheck{v}^h \in V_h, \quad & \widecheck{A}_{k_\eps}(\widecheck{u}^h_\eps,\widecheck{v}^h) - {\cal C}(\widecheck{v}^h,\psi^H) = 0,
  \\ \noalign{\vskip 3pt}
  \forall \phi^H \in W_{H,h}^{\rm enrich}, \quad & {\cal C}(\overline{u}^H-\widecheck{u}^h_\eps,\phi^H) = 0.
  \end{cases}
\end{equation}
Instead of~\eqref{eq:optim_J}, we consider, for some positive constants $c_-$ and $c_+$ fixed throughout this appendix (with $0 < c_- < c_+$), the minimization problem
\begin{equation} \label{eq:optim_J_matrix}
  I_{\eps, H, h} = \inf \left\{ J_{\eps, H, h}(\overline{k}), \quad \overline{k} \in {\cal M}(c_-,c_+) \right\},
\end{equation}
with
\begin{equation} \label{eq:def_cal_M}
{\cal M}(c_-,c_+) = \left\{ \text{symmetric $2 \times 2$ matrix $\overline{k}$ such that $c_- \leq \overline{k} \leq c_+$} \right\}
\end{equation}
and where $J_{\eps, H, h}(\overline{k})$ is defined by~\eqref{eq:def_J} (as for the scalar-valued case) from the solution to~\eqref{eq:arlequin_matrix}.

\begin{remark}
In the minimization set for~\eqref{eq:optim_J_matrix}, and in sharp contrast to the scalar case addressed in Section~\ref{sec:scalar}, we prescribe some explicit minimal and maximal ellipticity constants. If we remove this constraint from~\eqref{eq:optim_J_matrix}, it is unclear to us how to prove the existence of an optimal coefficient $\overline{k}^{\rm opt}_\eps$ for a fixed value of $\eps>0$. Stated otherwise, it is unclear to us how to extend the arguments of Sections~\ref{sec:k_n} and~\ref{sec:non_0} to the matrix-valued case.
\end{remark}

\begin{remark}\label{matrix_dirichlet_x_2}
  Note that, in sharp contrast to the case when $\overline{k}$ is scalar-valued, we can at best hope to identify the vector $k^\star \, e_1$ in the limit $\eps \to 0$, and not the entire matrix $k^\star$. Indeed, momentarily replacing $k_\eps$ by its homogenized limit $k^\star$ and assuming that $\overline{k} \, e_1 = k^\star \, e_1$ (and ignoring any space discretization), we find that the solution to~\eqref{eq:arlequin_matrix} satisfies $\overline{u}(x) = x_1$ in $D \cup D_c$, and hence $J(\overline{k}) = 0$ in that case, thus reaching the minimum in~\eqref{eq:optim_J_matrix}. 

  In the case of matrix-valued coefficients $\overline{k}$, we therefore need to consider both~\eqref{eq:arlequin_matrix} (that is~\eqref{eq:pb_min_H_enriched}) and~\eqref{eq:pb_min_H_x2} in order to recover all the components of the matrix $k^\star$. In this Appendix~\ref{seq:matrix}, we focus on~\eqref{eq:arlequin_matrix} and thus on recovering $k^\star \, e_1$.
\end{remark}

\subsection{A useful technical result: the specific case of homogeneous materials} \label{sec:tech_lemmas_matrix}

As in the scalar-case, we begin our analysis by a useful auxiliary result. We first point out that the homogenization results stated in Lemma~\ref{th:multiplication} (and also Corollary~\ref{coro:multiplication}, assuming there that $\overline{k}_0$ is symmetric positive definite) still hold in the matrix case. We have never used in their proof the fact that $k^\star$ or $\overline{k}$ was scalar-valued. We now turn to extending Lemma~\ref{lemma:optimization} to the matrix case.

To that purpose, we study the following system (where, as in~\eqref{eq:arlequin_limit_lemma}, we have set $h=0$): for any $\overline{k}_a$ and $\overline{k}_b$ in ${\cal M}(c_-,c_+)$, find $\overline{u}^H_{\overline{k}_a, \overline{k}_b} \in V_H^{\rm Dir BC}$, $\widecheck{u}_{\overline{k}_a, \overline{k}_b} \in H^1(D_c \cup D_f)$ and $\psi^H_{\overline{k}_a, \overline{k}_b} \in W_H^{\rm enrich}$ such that
\begin{equation} \label{eq:arlequin_limit_lemma_matrix}
  \begin{cases}
  \forall \overline{v}^H \in V_H^0, \quad & \overline{A}_{\overline{k}_a}(\overline{u}^H_{\overline{k}_a, \overline{k}_b},\overline{v}^H) + {\cal C}(\overline{v}^H,\psi^H_{\overline{k}_a, \overline{k}_b}) = 0,
  \\ \noalign{\vskip 3pt}
  \forall \widecheck{v} \in H^1(D_c \cup D_f), \quad & \widecheck{A}_{\overline{k}_b}(\widecheck{u}_{\overline{k}_a, \overline{k}_b},\widecheck{v}) - {\cal C}(\widecheck{v},\psi^H_{\overline{k}_a, \overline{k}_b}) = 0,
  \\ \noalign{\vskip 3pt}
  \forall \phi^H \in W_H^{\rm enrich}, \quad & {\cal C}(\overline{u}^H_{\overline{k}_a, \overline{k}_b}-\widecheck{u}_{\overline{k}_a, \overline{k}_b},\phi^H) = 0.
  \end{cases}
\end{equation}

\begin{lemma}\label{lemma:optimization_matrix}
  We consider~\eqref{eq:arlequin_limit_lemma_matrix} for some constant matrices $\overline{k}_a$ and $\overline{k}_b$ in ${\cal M}(c_-,c_+)$.
  
  If $\overline{k}_a \, e_1 = \overline{k}_b \, e_1$, then the solution to~\eqref{eq:arlequin_limit_lemma_matrix} is $\overline{u}^H_{\overline{k}_a, \overline{k}_b}(x) = x_1$ in $D \cup D_c$, $\widecheck{u}_{\overline{k}_a, \overline{k}_b}(x) = x_1$ in $D_c \cup D_f$ and $\dps \psi^H_{\overline{k}_a, \overline{k}_b} = \sum_{j=1}^2 (e_j^T \, \overline{k}_a e_1) \, \psi_{0,j}$ in $D_c$, where $\psi_{0,j}$ are the Lagrange multiplier functions defined by~\eqref{eq:psi_0j}.

  Conversely, if $(\overline{u}^H_{\overline{k}_a, \overline{k}_b},\widecheck{u}_{\overline{k}_a, \overline{k}_b},\psi^H_{\overline{k}_a, \overline{k}_b})$ is a solution to~\eqref{eq:arlequin_limit_lemma_matrix} with $\overline{u}^H_{\overline{k}_a, \overline{k}_b}(x) = x_1$ in $D \cup D_c$, then $\widecheck{u}_{\overline{k}_a, \overline{k}_b}(x) = x_1$ in $D_c \cup D_f$, $\dps \psi^H_{\overline{k}_a, \overline{k}_b} = \sum_{j=1}^2 (e_j^T \, \overline{k}_a e_1) \, \psi_{0,j}$ in $D_c$ and $\overline{k}_a \, e_1 = \overline{k}_b \, e_1$.
\end{lemma}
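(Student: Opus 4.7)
The plan is to follow the proof template of Lemma~\ref{lemma:optimization}, replacing the one-dimensional enrichment direction $\text{Span}\{\psi_0\}$ by the two-dimensional $\text{Span}\{\psi_{0,1},\psi_{0,2}\}$. One integration by parts in the spirit of~\eqref{eq:LMvariational} gives $\overline{A}_1(x_j,\overline{v}) = -{\cal C}(\overline{v},\psi_{0,j})$ for $j=1,2$ and $\overline{v}\in H^1(D\cup D_c)$ vanishing on $\Gamma$, and similarly $\widecheck{A}_{\overline{k}_b}(\zeta\cdot x,\widecheck{v}) = {\cal C}\bigl(\widecheck{v},\sum_j(\overline{k}_b\zeta)_j\psi_{0,j}\bigr)$ for any $\zeta\in\RR^2$ and $\widecheck{v}\in H^1(D_c\cup D_f)$. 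For the first assertion, I would plug the candidate triple $\bigl(x_1,x_1,\sum_j(e_j^T\overline{k}_a e_1)\psi_{0,j}\bigr)$ into the three lines of~\eqref{eq:arlequin_limit_lemma_matrix}: the identities above, used together with $\overline{k}_a e_1 = \overline{k}_b e_1$, make each equation hold, and uniqueness of the solution to~\eqref{eq:arlequin_limit_lemma_matrix} closes the argument.

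For the second assertion, set $\xi = \overline{k}_a e_1$ and assume $\overline{u}^H_{\overline{k}_a,\overline{k}_b}(x) = x_1$ in $D\cup D_c$. The first line of~\eqref{eq:arlequin_limit_lemma_matrix}, the variational characterization of each $\psi_{0,j}$ and the definition of the $H^1(D_c)$-projection $\Pi_H:H^1(D_c)\to W_H$ should yield the decomposition
\begin{equation*}
\psi^H_{\overline{k}_a,\overline{k}_b} = \sum_{j=1}^{2}\tau_j\,\psi_{0,j} + \sum_{j=1}^{2}(\xi_j-\tau_j)\,\Pi_H(\psi_{0,j})
\end{equation*}
for two scalars $\tau_1,\tau_2$ still to be identified. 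I then introduce $\widecheck{u}_j^{\overline{k}_b}, \widetilde{u}_j^{\overline{k}_b} \in H^1(D_c\cup D_f)$ with zero mean in $D_c$, defined by $\widecheck{A}_{\overline{k}_b}(\widecheck{u}_j^{\overline{k}_b},\widecheck{v})={\cal C}(\widecheck{v},\psi_{0,j})$ and $\widecheck{A}_{\overline{k}_b}(\widetilde{u}_j^{\overline{k}_b},\widecheck{v})={\cal C}(\widecheck{v},\Pi_H(\psi_{0,j}))$. Since $\overline{k}_b$ is constant, a direct verification shows that $\widecheck{u}_j^{\overline{k}_b}(x) = (\overline{k}_b^{-1}e_j)\cdot x$; in particular $\widecheck{u}_1^{\overline{k}_b}$ and $\widecheck{u}_2^{\overline{k}_b}$ span the linear functions on $D_c\cup D_f$ modulo constants, and they belong to $W_H^{\rm enrich}$. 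Solving the second line yields $\widecheck{u}_{\overline{k}_a,\overline{k}_b} = \lambda + \sum_j\bigl[\tau_j\widecheck{u}_j^{\overline{k}_b}+(\xi_j-\tau_j)\widetilde{u}_j^{\overline{k}_b}\bigr]$; testing the third line with $\phi^H=1$ and using the origin-symmetry of $D_c$ forces $\lambda=0$, so the third line reduces to
\begin{equation*}
\sum_{j=1}^{2}\tau_j\,\widecheck{u}_j^{\overline{k}_b} + \sum_{j=1}^{2}(\xi_j-\tau_j)\,\Pi_H^{\rm enrich}\bigl(\widetilde{u}_j^{\overline{k}_b}\bigr) = x_1 \quad \text{in $D_c$.}
\end{equation*}
One visible solution is $\tau = \xi = \overline{k}_b e_1$, since $(\overline{k}_b^{-1}\overline{k}_b e_1)\cdot x = x_1$; the core of the proof is then to rule out any other solution.

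Uniqueness would proceed by the matrix analog of the linear-independence argument at the end of the proof of Lemma~\ref{lemma:optimization}. Subtracting two solutions produces a homogeneous identity $\sum_j b_j\widecheck{u}_j^{\overline{k}_b} + \sum_j c_j\Pi_H^{\rm enrich}(\widetilde{u}_j^{\overline{k}_b}) = 0$. Setting $\widetilde{u} = \sum_j c_j\widetilde{u}_j^{\overline{k}_b}$, $\widecheck{u}_{\rm lin} = -\sum_j b_j\widecheck{u}_j^{\overline{k}_b}$, $\Psi_c = \sum_j c_j\psi_{0,j}$ and $\Psi_b = \sum_j b_j\psi_{0,j}$, I would observe that $\Pi_H^{\rm enrich}(\widetilde{u}-\widecheck{u}_{\rm lin}) = 0$ and $\widecheck{A}_{\overline{k}_b}(\widetilde{u}-\widecheck{u}_{\rm lin},\widecheck{v}) = {\cal C}(\widecheck{v},\Pi_H(\Psi_c)+\Psi_b)$. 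Since $\Pi_H(\Psi_c)+\Psi_b\in W_H^{\rm enrich}$, testing with $\widecheck{v} = \widetilde{u}-\widecheck{u}_{\rm lin}$ kills the right-hand side; coercivity of $\overline{k}_b$ and the zero-mean normalizations then give $\widetilde{u} = \widecheck{u}_{\rm lin}$, which in turn forces $\Pi_H(\Psi_c) + \Psi_b = 0$ as an identity in $H^1(D_c)$.

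The main obstacle, and the only place where the matrix case genuinely diverges from the scalar one, is to deduce $b = c = 0$ from $\Pi_H(\Psi_c)+\Psi_b = 0$. This rests on two auxiliary facts about the enrichment: (i) $\psi_{0,1}$ and $\psi_{0,2}$ are linearly independent in $H^1(D_c)$, and (ii) $\text{Span}\{\psi_{0,1},\psi_{0,2}\}\cap W_H = \{0\}$. For (i), a relation $c_1\psi_{0,1}+c_2\psi_{0,2} = 0$ and the Neumann data in~\eqref{eq:psi_0j} together give $c\cdot n_{\Gamma_c} = 0$ on each of the four edges of $\Gamma_c$, whose outward normals exhaust $\{\pm e_1,\pm e_2\}$, so $c = 0$. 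For (ii), any element of $W_H$ is piecewise affine on ${\cal T}_H$, so its distributional Laplacian is supported on edges; an element of $\text{Span}\{\psi_{0,1},\psi_{0,2}\}$ lying in $W_H$ must satisfy $-\Delta\psi+\psi = 0$ inside each triangle, forcing it to vanish elementwise, which reduces (ii) to (i). Combining, $\Psi_b = -\Pi_H(\Psi_c) \in \text{Span}\{\psi_{0,1},\psi_{0,2}\}\cap W_H = \{0\}$, so $\Psi_b = 0$ and $\Pi_H(\Psi_c) = 0$; (i) gives $b = 0$, while the vanishing of $\Pi_H(\Psi_c)$, translated through the variational formula~\eqref{eq:psi_0_FV} adapted to each $\psi_{0,j}$, becomes $\tfrac{1}{2}\int_{\Gamma_c}(c\cdot n_{\Gamma_c})\phi^H - \tfrac{1}{2}\int_{\Gamma_f}(c\cdot n_{\Gamma_f})\phi^H = 0$ for every $\phi^H\in W_H$, and varying $\phi^H$ independently on the different edges yields $c = 0$. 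Uniqueness being established, $\tau = \xi = \overline{k}_b e_1$, hence $\overline{k}_a e_1 = \overline{k}_b e_1$ and the stated expressions for $\widecheck{u}_{\overline{k}_a,\overline{k}_b}$ and $\psi^H_{\overline{k}_a,\overline{k}_b}$ follow.
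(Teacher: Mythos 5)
Your proof is correct and follows essentially the same route as the paper's: the same candidate triple plus uniqueness for the direct implication, the same decomposition $\psi^H_{\overline{k}_a,\overline{k}_b}=\sum_j\tau_j\,\psi_{0,j}+\sum_j(e_j^T\overline{k}_a e_1-\tau_j)\,\Pi_H(\psi_{0,j})$, the same auxiliary functions with the explicit formula $\widecheck{u}_{1,j}(\overline{k}_b)(x)=(\overline{k}_b^{-1}e_j)\cdot x$, and the same projection trick (testing with the difference and using $\Pi_H^{\rm enrich}$ of the combination being zero) to prove the linear independence of $\{\widecheck{u}_{1,j},\Pi_H^{\rm enrich}(\widecheck{u}_{2,j})\}$ and hence $\tau=\overline{k}_b e_1=\overline{k}_a e_1$. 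The only (welcome) deviation is your justification that an element of $\mathrm{Span}\{\psi_{0,1},\psi_{0,2}\}$ lying in $W_H$ must vanish — piecewise affine implies $\Delta\psi=0$ on each triangle, so $-\Delta\psi+\psi=0$ forces $\psi=0$ elementwise, with the Neumann data on the edges of $\Gamma_c$ then giving the independence of $\psi_{0,1},\psi_{0,2}$ — which is a sharper argument than the paper's remark that the $\psi_{0,j}$ cannot, in general, be finite element functions.
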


\begin{proof}
We start by the first assertion and assume $\overline{k}_a \, e_1 = \overline{k}_b \, e_1$. We immediately get the result, recalling that the system~\eqref{eq:arlequin_limit_lemma_matrix} has a unique solution and noticing that $\dps \left( \overline{u}^H_{\overline{k}_a,\overline{k}_b}(x),\widecheck{u}_{\overline{k}_a,\overline{k}_b}(x),\psi^H_{\overline{k}_a,\overline{k}_b}(x) \right) = \left( x_1,x_1, \sum_{j=1}^2 (e_j^T \, \overline{k}_a e_1) \, \psi_{0,j}(x) \right)$, where $\psi_{0,j}$ are defined by~\eqref{eq:psi_0j}, is a solution to~\eqref{eq:arlequin_limit_lemma_matrix}.

We now turn to the second assertion and hence assume that $\overline{u}^H_{\overline{k}_a,\overline{k}_b}(x) = x_1$ in $D \cup D_c$. The first line of~\eqref{eq:arlequin_limit_lemma_matrix} reads as
\begin{equation}\label{eq:first_line_matrix}
  \forall \overline{v}^H \in V_H^0, \qquad \overline{A}_{\overline{k}_a}(x_1,\overline{v}^H) + {\cal C}(\overline{v}^H,\psi^H_{\overline{k}_a,\overline{k}_b}) = 0.
\end{equation}
Since $\psi^H_{\overline{k}_a,\overline{k}_b} \in W_H^{\rm enrich} = W_H + \text{Span $\psi_{0,1}$} + \text{Span $\psi_{0,2}$}$, we can represent it as 
\begin{equation}\label{eq:lagrange_multiplier_expansion_matrix}
  \psi^H_{\overline{k}_a,\overline{k}_b} = \widetilde{\psi}^H + \sum_{j=1}^2 \tau_j \, \psi_{0,j},
\end{equation} 
for some $\tau_j \in \RR$ and some $\widetilde{\psi}^H \in W_H$. 

We infer from~\eqref{eq:first_line_matrix} and~\eqref{eq:lagrange_multiplier_expansion_matrix} that
$$
\forall \overline{v}^H \in V_H^0, \qquad {\cal C}(\overline{v}^H,\widetilde{\psi}^H) = - \overline{A}_{\overline{k}_a}(x_1,\overline{v}^H) - \sum_{j=1}^2 \tau_j \, {\cal C}(\overline{v}^H,\psi_{0,j}),
$$
which provides us with an expression of $\widetilde{\psi}^H$ in terms of the $\tau_j$ and the entries of $\overline{k}_a$, using the linearity of the problem: 
\begin{equation} \label{eq:covid6_matrix}
\widetilde{\psi}^H = \sum_{j=1}^2 (e_j^T \, \overline{k}_a e_1) \, \widetilde{\psi}_{2,j}^H - \sum_{j=1}^2 \tau_j \, \widetilde{\psi}_{1,j}^H,
\end{equation}
with $\widetilde{\psi}_{1,j}^H \in W_H$ and $\widetilde{\psi}_{2,j}^H \in W_H$ uniquely defined by
\begin{equation} \label{eq:LMtech_matrix}
  \begin{array}{l}
    \forall \overline{v}^H \in V_H^0, \qquad {\cal C}(\overline{v}^H,\widetilde{\psi}_{1,j}^H) = {\cal C}(\overline{v}^H,\psi_{0,j}),
    \\ \noalign{\vskip 3pt}
    \forall \overline{v}^H \in V_H^0, \qquad {\cal C}(\overline{v}^H,\widetilde{\psi}_{2,j}^H) = -\overline{A}_{e_j}(\overline{v}^H),
  \end{array}
\end{equation}
where the linear form $\overline{A}_{e_j}$ is defined by
$$
\overline{A}_{e_j}(\overline{v}) = \int_D e_j \cdot \nabla \overline{v}(x) + \frac{1}{2} \int_{D_c} e_j \cdot \nabla \overline{v}(x).
$$

In the sequel, we again use the $H^1$-orthogonal projection operators to the coarse finite element spaces $\Pi_H : H^1(D_c) \to W_H$ and $\Pi_H^{\rm enrich} : H^1(D_c) \to W^{\rm enrich}_H$ defined by~\eqref{eq:Pih_a} and~\eqref{eq:Pih_b}, where $W^{\rm enrich}_H$ is now of course defined by~\eqref{eq:def_W_H_enrich_matrix}.

In view of the first line of~\eqref{eq:LMtech_matrix} and of the definition of $\Pi_H$, we have that $\widetilde{\psi}^H_{1,j} = \Pi_H(\psi_{0,j})$. We next observe that the Lagrange multipliers $\psi_{0,j}$ defined by~\eqref{eq:psi_0j} satisfy
$$
\forall \overline{v} \in H^1(D \cup D_c) \ \ \text{with $\overline{v} = 0$ on $\Gamma$}, \quad \overline{A}_{e_j}(\overline{v}) = - {\cal C}(\overline{v},\psi_{0,j}).
$$
Gathering this relation with the second line of~\eqref{eq:LMtech_matrix}, we see that $\widetilde{\psi}^H_{2,j}$ satisfies the same equation as $\widetilde{\psi}^H_{1,j}$. We thus have $\widetilde{\psi}^H_{2,j} = \widetilde{\psi}^H_{1,j} = \Pi_H(\psi_{0,j})$. Inserting this relation in~\eqref{eq:covid6_matrix}, we deduce that $\dps \widetilde{\psi}^H = \sum_{j=1}^2 (e_j^T \, \overline{k}_a e_1 - \tau_j) \, \Pi_H(\psi_{0,j})$, and thus
\begin{equation}\label{eq:psi_decomposition_matrix}
  \psi^H_{\overline{k}_a,\overline{k}_b} = \sum_{j=1}^2 \tau_j \, \psi_{0,j} + \sum_{j=1}^2 (e_j^T \, \overline{k}_a e_1 - \tau_j) \, \Pi_H(\psi_{0,j}),
\end{equation}
where the coefficients $\tau_j$ will be determined later.

\medskip

We now turn to the second line of~\eqref{eq:arlequin_limit_lemma_matrix}. Let us introduce $\widecheck{u}_{1,j}(\overline{k}_b)$ and $\widecheck{u}_{2,j}(\overline{k}_b)$ in $H^1(D_c \cup D_f)$ such that $\dps \int_{D_c} \widecheck{u}_{1,j}(\overline{k}_b) = \int_{D_c} \widecheck{u}_{2,j}(\overline{k}_b) = 0$ and 
\begin{equation}\label{eq:u1_u2_matrix}
  \begin{array}{ll}
    \forall \widecheck{v} \in H^1(D_c \cup D_f), \qquad & \widecheck{A}_{\overline{k}_b}\big(\widecheck{u}_{1,j}(\overline{k}_b),\widecheck{v}\big) = {\cal C}(\widecheck{v},\psi_{0,j}),
    \\ \noalign{\vskip 3pt}
    \forall \widecheck{v} \in H^1(D_c \cup D_f), \qquad & \widecheck{A}_{\overline{k}_b}\big(\widecheck{u}_{2,j}(\overline{k}_b),\widecheck{v}\big) = {\cal C}\big(\widecheck{v},\Pi_H(\psi_{0,j})\big).
  \end{array}
\end{equation}
In contrast to the case when $\overline{k}_b$ is scalar, the functions $\widecheck{u}_{2,j}(\overline{k}_b)$ depend on $\overline{k}_b$ in a complex manner. On the other hand, an explicit expression for $\widecheck{u}_{1,j}(\overline{k}_b)$ can be easily determined. Indeed, we observe that,
$$
\forall \widecheck{v} \in H^1(D_c \cup D_f), \quad {\cal C}(\widecheck{v},\psi_{0,j}) = \frac{1}{2} \int_{D_c} e_j \cdot \nabla \widecheck{v}(x) + \int_{D_f} e_j \cdot \nabla \widecheck{v}(x).
$$
The functions $\widecheck{u}_{1,j}(\overline{k}_b)$ defined by $\widecheck{u}_{1,j}(\overline{k}_b)(x) = [(\overline{k}_b)^{-1} \, e_j] \cdot x$ on $D_c \cup D_f$ have a vanishing mean over $D_c$ and are such that $\overline{k}_b \nabla \widecheck{u}_{1,j}(\overline{k}_b) = e_j$. They are hence the unique solution to the first line of~\eqref{eq:u1_u2_matrix}.

Inserting the expression~\eqref{eq:psi_decomposition_matrix} for $\psi^H_{\overline{k}_a, \overline{k}_b}$ in the second line of~\eqref{eq:arlequin_limit_lemma_matrix}, we obtain
$$
  \widecheck{u}_{\overline{k}_a, \overline{k}_b} = \lambda + \sum_{j=1}^2 \tau_j \, \widecheck{u}_{1,j}(\overline{k}_b) + \sum_{j=1}^2 (e_j^T \, \overline{k}_a e_1 - \tau_j) \, \widecheck{u}_{2,j}(\overline{k}_b),
  $$
where $\lambda \in \RR$ is an arbitrary constant.

\medskip

To identify the constants $\lambda$ and $\tau_j$, we use the third line of~\eqref{eq:arlequin_limit_lemma_matrix}, that reads as: for any $\phi^H \in W_H^{\rm enrich}$,
$$
{\cal C}\left(\lambda + \sum_{j=1}^2 \tau_j \, \widecheck{u}_{1,j}(\overline{k}_b) + \sum_{j=1}^2 (e_j^T \, \overline{k}_a e_1 - \tau_j) \, \widecheck{u}_{2,j}(\overline{k}_b) - \overline{u}^H_{\overline{k}_a,\overline{k}_b}, \phi^H \right) = 0.
$$
Taking $\phi^H = 1$ and using that the mean over $D_c$ of $\overline{u}^H_{\overline{k}_a,\overline{k}_b}$, $\widecheck{u}_{1,j}$ and $\widecheck{u}_{2,j}$ vanishes, we get $\lambda = 0$. Now taking
$$
\phi^H = \Pi^{\rm enrich}_H\left(\sum_{j=1}^2 \tau_j \, \widecheck{u}_{1,j}(\overline{k}_b) + \sum_{j=1}^2 (e_j^T \, \overline{k}_a e_1 - \tau_j) \, \widecheck{u}_{2,j}(\overline{k}_b) - \overline{u}^H_{\overline{k}_a,\overline{k}_b}\right),
$$
we obtain that
$$
\Pi^{\rm enrich}_H\left(\sum_{j=1}^2 \tau_j \, \widecheck{u}_{1,j}(\overline{k}_b) + \sum_{j=1}^2 (e_j^T \, \overline{k}_a e_1 - \tau_j) \, \widecheck{u}_{2,j}(\overline{k}_b) - \overline{u}^H_{\overline{k}_a,\overline{k}_b}\right) = 0,
$$
which reads, since $\widecheck{u}_{1,j}(\overline{k}_b)$ and $\overline{u}^H_{\overline{k}_a,\overline{k}_b}$ belong to $W_H^{\rm enrich}$, as 
$$
\sum_{j=1}^2 \tau_j \, \widecheck{u}_{1,j}(\overline{k}_b) + \sum_{j=1}^2 (e_j^T \, \overline{k}_a e_1 - \tau_j) \, \Pi^{\rm enrich}_H\left(\widecheck{u}_{2,j}(\overline{k}_b)\right) - \overline{u}^H_{\overline{k}_a,\overline{k}_b} = 0 \quad \text{in $D_c$}.
$$
Using the explicit expression of $\widecheck{u}_{1,j}(\overline{k}_b)$ and $\overline{u}^H_{\overline{k}_a,\overline{k}_b}$, we next observe that, taking $\beta \in \RR^2$ such that $(\overline{k}_b)^{-1} \, \beta = e_1$, we have $\dps \overline{u}^H_{\overline{k}_a,\overline{k}_b} = \sum_{j=1}^2 \beta_j \, \widecheck{u}_{1,j}(\overline{k}_b)$ in $D_c$. The above relation hence implies that
\begin{equation} \label{eq:vacances}
  \sum_{j=1}^2 (\tau_j-\beta_j) \, \widecheck{u}_{1,j}(\overline{k}_b) + \sum_{j=1}^2 (e_j^T \, \overline{k}_a e_1 - \tau_j) \, \Pi^{\rm enrich}_H\left(\widecheck{u}_{2,j}(\overline{k}_b)\right) = 0 \quad \text{in $D_c$}.
\end{equation}

We now claim that the four functions $\widecheck{u}_{1,j}(\overline{k}_b)$ and $\Pi^{\rm enrich}_H (\widecheck{u}_{2,j}(\overline{k}_b))$, $j=1,2$, are linearly independent on $D_c$. In order to prove this claim, we argue by contradiction and assume that there exist real numbers $\gamma_{1,j}$ and $\gamma_{2,j}$ such that 
\begin{equation}\label{eq:collinearity_matrix}
  \sum_{j=1}^2 \gamma_{1,j} \, \widecheck{u}_{1,j}(\overline{k}_b) + \sum_{j=1}^2 \gamma_{2,j} \, \Pi^{\rm enrich}_H\left(\widecheck{u}_{2,j}(\overline{k}_b)\right) = 0 \quad \text{in $D_c$}.
\end{equation}
Let us introduce
$$
\widecheck{w} = \sum_{j=1}^2 \gamma_{1,j} \, \widecheck{u}_{1,j}(\overline{k}_b) + \sum_{j=1}^2 \gamma_{2,j} \, \widecheck{u}_{2,j}(\overline{k}_b).
$$
For any $\widecheck{v} \in H^1(D_c \cup D_f)$, we compute, using~\eqref{eq:u1_u2_matrix}, that
\begin{align*}
  \widecheck{A}_{\overline{k}_b}(\widecheck{w},\widecheck{v})
  &=
  \sum_{j=1}^2 \gamma_{1,j} \, \widecheck{A}_{\overline{k}_b}(\widecheck{u}_{1,j}(\overline{k}_b),\widecheck{v}) + \sum_{j=1}^2 \gamma_{2,j} \, \widecheck{A}_{\overline{k}_b}(\widecheck{u}_{2,j}(\overline{k}_b),\widecheck{v})
  \\
  &=
  \sum_{j=1}^2 \gamma_{1,j} \, {\cal C}(\widecheck{v},\psi_{0,j}) + \sum_{j=1}^2 \gamma_{2,j} \, {\cal C}(\widecheck{v},\Pi_H(\psi_{0,j}))
  \\
  &=
  {\cal C}\left(\widecheck{v},\sum_{j=1}^2 \gamma_{1,j} \, \psi_{0,j} + \sum_{j=1}^2 \gamma_{2,j} \, \Pi_H(\psi_{0,j}) \right).
\end{align*}
Taking $\widecheck{v} = \widecheck{w}$ in the equation above, we obtain that
\begin{align} 
  \widecheck{A}_{\overline{k}_b}(\widecheck{w},\widecheck{w})
  &=
  {\cal C}\left(\widecheck{w},\sum_{j=1}^2 \gamma_{1,j} \, \psi_{0,j} + \sum_{j=1}^2 \gamma_{2,j} \, \Pi_H(\psi_{0,j}) \right)
  \nonumber
  \\
  &=
  {\cal C}\left(\Pi_H^{\rm enrich}(\widecheck{w}),\sum_{j=1}^2 \gamma_{1,j} \, \psi_{0,j} + \sum_{j=1}^2 \gamma_{2,j} \, \Pi_H(\psi_{0,j}) \right),
  \label{eq:colliniearity2_matrix}
\end{align}
where the last equality stems from the definition of the projection operator $\Pi_H^{\rm enrich}$. We next observe that $\Pi_H^{\rm enrich}(\widecheck{w}) = 0$, because of~\eqref{eq:collinearity_matrix} and the fact that $\Pi_H^{\rm enrich}(\widecheck{u}_{1,j}(\overline{k}_b)) = \widecheck{u}_{1,j}(\overline{k}_b)$ (recall that $\widecheck{u}_{1,j}(\overline{k}_b)$ is a linear function on $D_c \cup D_f$ and thus belongs to $W_H^{\rm enrich}$). The right-hand side of~\eqref{eq:colliniearity2_matrix} thus vanishes. By definition of the bilinear form $\widecheck{A}_{\overline{k}_b}$, this implies that $\widecheck{w} = \widehat{\lambda}$ on $D_c \cup D_f$ for some constant $\widehat{\lambda}$. Since the average of $\widecheck{w}$ over $D_c$ vanishes, we obtain $\widehat{\lambda}=0$ and thus $\widecheck{w}=0$.

We thus infer from~\eqref{eq:u1_u2_matrix} that, for any $\widecheck{v} \in H^1(D_c \cup D_f)$,
$$
{\cal C}\left(\widecheck{v},\sum_{j=1}^2 \gamma_{1,j} \, \psi_{0,j} + \sum_{j=1}^2 \gamma_{2,j} \, \Pi_H(\psi_{0,j})\right) = \widecheck{A}_{\overline{k}_b}(\widecheck{w},\widecheck{v}) = 0.
$$
This yields $\dps \sum_{j=1}^2 \gamma_{1,j} \, \psi_{0,j} + \sum_{j=1}^2 \gamma_{2,j} \, \Pi_H(\psi_{0,j}) = 0$, and thus that $\dps \sum_{j=1}^2 \gamma_{1,j} \, \psi_{0,j} \in W_H$. The functions $\psi_{0,j}$ are solutions to~\eqref{eq:psi_0j} and thus cannot be, in general, equal to finite element functions. We hence obtain that $\dps \sum_{j=1}^2 \gamma_{1,j} \, \psi_{0,j} = 0$, and thus $\gamma_{1,j} = 0$ for $j=1,2$. We then get $\dps \Pi_H\left( \sum_{j=1}^2 \gamma_{2,j} \, \psi_{0,j} \right) = 0$, and thus, for any $\varphi^H \in W_H$,
\begin{multline*}
0
=
\left(\Pi_H\left( \sum_{j=1}^2 \gamma_{2,j} \, \psi_{0,j} \right),\varphi^H\right)_{H^1(D_c)} = \left(\sum_{j=1}^2 \gamma_{2,j} \, \psi_{0,j},\varphi^H\right)_{H^1(D_c)}
\\ =
\sum_{j=1}^2 \gamma_{2,j} \, {\cal C}(\psi_{0,j},\varphi^H)
=
\frac{1}{2} \int_{\Gamma_c} (\overline{e} \cdot n_{\Gamma_c}) \, \varphi^H - \frac{1}{2} \int_{\Gamma_f} (\overline{e} \cdot n_{\Gamma_f}) \, \varphi^H,
\end{multline*}
where $\dps \overline{e} = \sum_{j=1}^2 \gamma_{2,j} \, e_j$ and where we have used the variational formulation satisfied by $\psi_{0,j}$ in the last equality. Since the value of $\varphi^H$ can be chosen independently on $\Gamma_c$ and $\Gamma_f$, this implies that, for any $\varphi^H \in W_H$, we have $\dps \int_{\Gamma_c} \varphi^H \, \overline{e} \cdot n_{\Gamma_c} = 0 = \int_{\Gamma_f} \varphi^H \, \overline{e} \cdot n_{\Gamma_f}$. If $\overline{e} \neq 0$, then $\overline{e} \cdot n_{\Gamma_c}$ does not identically vanishes on $\Gamma_c$, and we obtain a contradiction. We thus have $\overline{e} = 0$, and thus $\gamma_{2,j} = 0$ for $j=1,2$.

We hence have shown that $\gamma_{1,j} = \gamma_{2,j} = 0$ for $j=1,2$, which concludes the proof of our claim that the functions $\widecheck{u}_{1,j}(\overline{k}_b)$ and $\Pi^{\rm enrich}_H (\widecheck{u}_{2,j}(\overline{k}_b))$ are linearly independent on $D_c$.

\medskip

We now return to~\eqref{eq:vacances} and deduce that, for $j=1,2$, we have $\beta_j = \tau_j = e_j^T \, \overline{k}_a e_1$. Recalling that $\beta = \overline{k}_b e_1$, we hence have shown that $\overline{k}_b e_1 = \overline{k}_a e_1$. This concludes the proof of Lemma~\ref{lemma:optimization_matrix}.
\end{proof}

\subsection{Well-posedness of the optimization problem upon $\overline{k}$ at fixed $\eps$} \label{sec:well_matrix}

As in Section~\ref{sec:opt}, we aim at showing the existence of a minimizer $\overline{k}^{\rm opt}_\eps$ of~\eqref{eq:optim_J_matrix} for a fixed value of $\eps>0$. For that purpose, we again consider a minimizing sequence $\{ \overline{k}^n \}_{n \in \NN}$ of the optimization problem~\eqref{eq:optim_J_matrix}, that is a sequence $\overline{k}^n \in {\cal M}(c_-,c_+)$ that satisfies the inequality
$$
I_{\eps, H, h} \leq J_{\eps, H, h}(\overline{k}^n) = \int_{D \cup D_c} \big| \nabla \overline{u}^H_{\overline{k}^n, k_\eps} - e_1 \big|^2 \leq I_{\eps, H, h} + \frac{1}{n},
$$
where $(\overline{u}^H_{\overline{k}^n, k_\eps},\widecheck{u}^h_{\eps,\overline{k}^n, k_\eps}, \psi^H_{\overline{k}^n, k_\eps}) \in V_H^{\rm Dir BC} \times V_h \times W_{H,h}^{\rm enrich}$ is the solution to~\eqref{eq:arlequin_matrix} for the tentative constant coefficient $\overline{k} = \overline{k}^n$.

\medskip

In contrast to the scalar case, the set ${\cal M}(c_-,c_+)$ in which we look for the constant matrix $\overline{k}$ is compact. We thus immediately obtain that, up to the extraction of a subsequence, $\overline{k}^n$ converges to some $\overline{k}^{\rm opt}_\eps \in {\cal M}(c_-,c_+)$. Moreover, since $\overline{k}^n$ remains isolated from 0 and $\infty$ (in the sense of symmetric matrices), the map $\overline{k}^n \mapsto \overline{u}^H_{\overline{k}^n, k_\eps}$ is continuous from ${\cal M}(c_-,c_+)$ to $H^1(D \cup D_c)$, and thus $J_{\eps, H, h}(\overline{k}^n)$ converges to $J_{\eps, H, h}(\overline{k}^{\rm opt}_\eps)$. This shows that $\overline{k}^{\rm opt}_\eps$ is a minimizer of~\eqref{eq:optim_J_matrix}. We hence have shown the following result. 

\begin{theorem}\label{th:optimization_matrix}
  Let $k_\eps$ be a symmetric matrix that satisfies the classical boundedness and coercivity conditions~\eqref{eq:boundedness+coercivity}. 
  
  For any fixed $\eps>0$, $H > 0$ and $h > 0$, and for any positive constants $c_+ > c_- > 0$, the optimization problem~\eqref{eq:optim_J_matrix} has at least one minimizer $\overline{k}^{\rm opt}_\eps(H,h) \in {\cal M}(c_-,c_+)$.
\end{theorem}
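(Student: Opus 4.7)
The plan is to exploit the fact that, in sharp contrast to the scalar case of Section~\ref{sec:opt}, the minimization set ${\cal M}(c_-,c_+)$ is a \emph{compact} subset of the finite-dimensional space $\RR^{2\times 2}_{\rm sym}$ of symmetric $2\times 2$ matrices. The obstacles encountered in Sections~\ref{sec:k_n} and~\ref{sec:non_0} (namely, ruling out $\overline{k}^n \to \infty$ and $\overline{k}^n \to 0$, which required the variational conditions~\eqref{eq:J_contradiction} and~\eqref{eq:J_contradiction2}) therefore simply disappear: a minimizing sequence in ${\cal M}(c_-,c_+)$ is automatically bounded and remains uniformly elliptic.

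I would proceed in three short steps. First, pick a minimizing sequence $\{\overline{k}^n\}_{n\in\NN} \subset {\cal M}(c_-,c_+)$, namely a sequence satisfying
\begin{equation*}
I_{\eps,H,h} \le J_{\eps,H,h}(\overline{k}^n) \le I_{\eps,H,h} + \frac{1}{n}.
\end{equation*}
By compactness of ${\cal M}(c_-,c_+)$ (a closed and bounded subset of $\RR^{2\times 2}_{\rm sym}$), one extracts a subsequence, still denoted $\overline{k}^n$, converging to some $\overline{k}^{\rm opt}_\eps \in {\cal M}(c_-,c_+)$. Note in particular that $\overline{k}^{\rm opt}_\eps$ inherits the bound $c_- \le \overline{k}^{\rm opt}_\eps \le c_+$, so it is symmetric positive definite and is therefore an admissible coefficient for which the coupled system~\eqref{eq:arlequin_matrix} is well-posed.

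Second, I would show that the map $\overline{k} \mapsto \overline{u}^H_{\overline{k},k_\eps}$ is continuous from ${\cal M}(c_-,c_+)$ (endowed with any matrix norm) to $V_H^{\rm Dir BC} \subset H^1(D\cup D_c)$. Writing the variational system~\eqref{eq:arlequin_matrix} for $\overline{k}^n$ and for $\overline{k}^{\rm opt}_\eps$ and subtracting, the difference $(\overline{u}^H_{\overline{k}^n,k_\eps} - \overline{u}^H_{\overline{k}^{\rm opt}_\eps,k_\eps}, \widecheck{u}^h_{\eps,\overline{k}^n,k_\eps} - \widecheck{u}^h_{\eps,\overline{k}^{\rm opt}_\eps,k_\eps}, \psi^H_{\overline{k}^n,k_\eps}-\psi^H_{\overline{k}^{\rm opt}_\eps,k_\eps})$ satisfies a linear Arlequin-type system with zero Dirichlet datum on $\Gamma$ and a right-hand side of the form $-\overline{A}_{\overline{k}^n - \overline{k}^{\rm opt}_\eps}(\overline{u}^H_{\overline{k}^{\rm opt}_\eps,k_\eps},\cdot)$ in the first line. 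Standard Arlequin inf-sup stability (available because $\overline{k}^n$ and $k_\eps$ are uniformly elliptic, the lower bound being $\min(c_-,c_1)$, independent of $n$) yields an estimate of the form
\begin{equation*}
\| \overline{u}^H_{\overline{k}^n,k_\eps} - \overline{u}^H_{\overline{k}^{\rm opt}_\eps,k_\eps} \|_{H^1(D\cup D_c)} \le C \, |\overline{k}^n - \overline{k}^{\rm opt}_\eps| \, \| \overline{u}^H_{\overline{k}^{\rm opt}_\eps,k_\eps} \|_{H^1(D\cup D_c)},
\end{equation*}
where $C$ depends only on $c_-$, $c_+$, $c_1$, $c_2$, $H$, $h$ and the geometry. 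Continuity of $\overline{k} \mapsto \overline{u}^H_{\overline{k},k_\eps}$ follows.

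Third, continuity of $\overline{k} \mapsto J_{\eps,H,h}(\overline{k})$ from~\eqref{eq:def_J} is then an immediate consequence, and passing to the limit $n\to\infty$ in the inequality $J_{\eps,H,h}(\overline{k}^n) \le I_{\eps,H,h} + 1/n$ yields $J_{\eps,H,h}(\overline{k}^{\rm opt}_\eps) = I_{\eps,H,h}$, so $\overline{k}^{\rm opt}_\eps$ is a minimizer. There is no real obstacle here; the only technical point worth checking carefully is the uniform-in-$n$ Arlequin stability estimate in Step~2, but this follows from the standard Brezzi theory applied to the saddle-point problem~\eqref{eq:arlequin_matrix}, for which the relevant coercivity and inf-sup constants depend only on the ellipticity bounds $c_-$ and $c_1$ (which are independent of $n$) and on the discretization parameters $H$ and $h$ (which are fixed throughout the statement).
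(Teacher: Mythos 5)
Your proposal is correct and follows essentially the same route as the paper: extract a convergent subsequence of the minimizing sequence using the compactness of ${\cal M}(c_-,c_+)$, then conclude via the continuity of $\overline{k} \mapsto \overline{u}^H_{\overline{k},k_\eps}$ (hence of $J_{\eps,H,h}$) on this set, which is exactly the paper's argument. The only difference is that the paper simply asserts this continuity (the coefficient being isolated from $0$ and $\infty$, with fixed finite-dimensional spaces), whereas you justify it by a uniform saddle-point stability estimate, which is a legitimate way to make the same point explicit.
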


\subsection{Homogenized limit}\label{sec:homogenization_matrix}

We are now going to pass to the limit $\eps \to 0$. Since $h$ has to be chosen much smaller than $\eps$, this implies that we also have $h \to 0$. For simplicity, and as in Section~\ref{sec:homogenization}, we hereafter fix $h = 0$. For each $\eps>0$, we know from Theorem~\ref{th:optimization_matrix} (which also holds true if we set $h=0$) that there exists at least one optimal constant matrix $\overline{k}^{\rm opt}_\eps$ (which minimizes~\eqref{eq:optim_J_matrix}) with the corresponding solution $(\overline{u}^H_{\overline{k}^{\rm opt}_\eps,k_\eps},\widecheck{u}^h_{\eps,\overline{k}^{\rm opt}_\eps,k_\eps},\psi^H_{\overline{k}^{\rm opt}_\eps,k_\eps}) \in V_H^{\rm Dir BC} \times H^1(D_c \cup D_f) \times W_H^{\rm enrich}$ to the system~\eqref{eq:arlequin2} (where of course $W_H^{\rm enrich}$ is defined in~\eqref{eq:arlequin2} by~\eqref{eq:def_W_H_enrich_matrix}).

We have assumed in~\eqref{eq:structure-k} that the sequence $k_\eps$ is such that $k_\eps = k_{\rm per}(\cdot/\eps)$ for some fixed periodic function $k_{\rm per}$. This periodicity assumption implies that the homogenized coefficient $k^\star$ exists and is constant, a fact that we are going to use below. Similarly to the scalar case, we perform our analysis in the periodic setting but believe that it actually carries over to more general cases (random stationary setting, \dots).

\begin{theorem}\label{th:homogenization_matrix}
Let $k_\eps$ be given by~\eqref{eq:structure-k} for some fixed periodic coefficient $k_{\rm per}$ that satisfies the classical boundedness and coercivity conditions~\eqref{eq:boundedness+coercivity}. We make the regularity assumption~\eqref{eq:holder_continuity} and the geometric assumption~\eqref{eq:boundaries}. We furthermore assume that the constants $c_+ > c_- > 0$ have been chosen in~\eqref{eq:def_cal_M} so that the homogenized coefficient $k^\star$ belongs to ${\cal M}(c_-,c_+)$.

Let $\overline{k}_\eps^{\rm opt}(H,h)$ be an optimal coefficient (the existence of which is provided by Theorem~\ref{th:optimization_matrix}). Then the vector $\overline{k}_\eps^{\rm opt}(H,h) \, e_1$ converges to $k^\star \, e_1$ when $h$ and $\eps$ go to $0$: for any $H>0$, we have
\begin{equation} \label{eq:main_result2_matrix}
\lim_{\eps \to 0} \lim_{h \to 0} \overline{k}_\eps^{\rm opt}(H,h) \, e_1 = k^\star \, e_1.
\end{equation}
\end{theorem}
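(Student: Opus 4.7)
The plan is to mimic the strategy of the proof of Theorem~\ref{th:homogenization} in Section~\ref{sec:homogenization}, while simplifying it thanks to the compactness of the minimization set ${\cal M}(c_-,c_+)$, and concluding with the matrix-valued analogue of Lemma~\ref{lemma:optimization}, namely Lemma~\ref{lemma:optimization_matrix}. As indicated in the paper, we set $h=0$ and consider the partially discretized system~\eqref{eq:arlequin2} with the appropriate enriched space $W_H^{\rm enrich}$ defined in~\eqref{eq:def_W_H_enrich_matrix}.

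\textbf{Step 1: a priori bounds.} Since $\overline{k}^{\rm opt}_\eps$ lies in the compact set ${\cal M}(c_-,c_+)$, it is uniformly bounded and uniformly coercive in $\eps$. Repeating verbatim the arguments of Section~\ref{sec:bounds} (no change is needed since those arguments only rely on the boundedness and coercivity of $\overline{k}^n$ and on~\eqref{eq:boundedness+coercivity}), we obtain that $\overline{u}^H_{\overline{k}^{\rm opt}_\eps, k_\eps}$ is bounded in $H^1(D \cup D_c)$, $\widecheck{u}_{\eps,\overline{k}^{\rm opt}_\eps, k_\eps}$ is bounded in $H^1(D_c \cup D_f)$ and $\psi^H_{\overline{k}^{\rm opt}_\eps, k_\eps}$ is bounded in $H^1(D_c)$, all uniformly in $\eps$. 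Up to a subsequence extraction, we hence have convergence to some triple $(\overline{u}^H_\star,\widecheck{u}_\star,\psi^H_\star)$, strongly in $H^1(D \cup D_c)$ for the first component and in $H^1(D_c)$ for the third (both belong to finite-dimensional spaces), and weakly in $H^1(D_c \cup D_f)$ for the second. By compactness of ${\cal M}(c_-,c_+)$, we can also assume that $\overline{k}^{\rm opt}_\eps$ converges to some $\overline{k}^{\rm opt}_0 \in {\cal M}(c_-,c_+)$.

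\textbf{Step 2: identification of $\overline{u}^H_\star$.} We now show that $\overline{u}^H_\star(x) = x_1$ in $D \cup D_c$. Since $k^\star \in {\cal M}(c_-,c_+)$ by assumption, $k^\star$ is an admissible test coefficient in~\eqref{eq:optim_J_matrix}, so that
$$
\int_{D \cup D_c} \big| \nabla \overline{u}^H_{\overline{k}^{\rm opt}_\eps, k_\eps} - e_1 \big|^2 = J_{\eps, H}(\overline{k}^{\rm opt}_\eps) \leq J_{\eps, H}(k^\star).
$$
We then argue exactly as in the derivation of~\eqref{eq:limit_contradiction} and~\eqref{eq:limit_contradiction_I}: the solution of~\eqref{eq:system_k_star} with $\overline{k}=k^\star$ enjoys bounds uniform in $\eps$, and hence we may apply Lemma~\ref{th:multiplication} (valid in the matrix case, as noted at the beginning of Section~\ref{sec:tech_lemmas_matrix}) and the first assertion of Lemma~\ref{lemma:optimization_matrix} applied with $\overline{k}_a = \overline{k}_b = k^\star$ to obtain $\lim_{\eps \to 0} J_{\eps, H}(k^\star) = 0$. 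Consequently $\int_{D \cup D_c} | \nabla \overline{u}^H_\star - e_1 |^2 = 0$, and $\overline{u}^H_\star(x) = x_1$ in $D \cup D_c$.

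\textbf{Step 3: passage to the limit and conclusion.} We now pass to the limit $\eps \to 0$ in the system~\eqref{eq:arlequin2} satisfied by $(\overline{u}^H_{\overline{k}^{\rm opt}_\eps, k_\eps},\widecheck{u}_{\eps,\overline{k}^{\rm opt}_\eps, k_\eps},\psi^H_{\overline{k}^{\rm opt}_\eps, k_\eps})$. The first and third lines pass to the limit in a straightforward way (using strong convergence of $\overline{k}^{\rm opt}_\eps$ to $\overline{k}^{\rm opt}_0$, the strong convergence of $\overline{u}^H_{\overline{k}^{\rm opt}_\eps, k_\eps}$ and $\psi^H_{\overline{k}^{\rm opt}_\eps, k_\eps}$, and the weak convergence of $\widecheck{u}_{\eps,\overline{k}^{\rm opt}_\eps, k_\eps}$). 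For the second line, we invoke Corollary~\ref{coro:multiplication} (which is stated to carry over to the matrix case, see Section~\ref{sec:tech_lemmas_matrix}), together with the fact that $\overline{k}^{\rm opt}_0 \in {\cal M}(c_-,c_+)$ is isolated from $0$, so that we are in the regular case of the corollary. We thus find that $(\overline{u}^H_\star,\widecheck{u}_\star,\psi^H_\star)$ is the unique solution to the system~\eqref{eq:arlequin_limit_lemma_matrix} with $\overline{k}_a = \overline{k}^{\rm opt}_0$ and $\overline{k}_b = k^\star$. Combined with the identification $\overline{u}^H_\star(x) = x_1$ obtained in Step~2, the second assertion of Lemma~\ref{lemma:optimization_matrix} then yields $\overline{k}^{\rm opt}_0 \, e_1 = k^\star \, e_1$.

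\textbf{From subsequence convergence to convergence of the whole sequence.} The above argument shows that any convergent subsequence of $\overline{k}^{\rm opt}_\eps \, e_1$ has limit $k^\star \, e_1$. Since $\overline{k}^{\rm opt}_\eps \, e_1$ lives in the compact set $\{ A \, e_1 \ : \ A \in {\cal M}(c_-,c_+) \} \subset \RR^2$, every subsequence admits a further convergent subsequence, whose limit must be $k^\star \, e_1$; the whole sequence therefore converges to $k^\star \, e_1$, which proves~\eqref{eq:main_result2_matrix}.

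The main obstacle, and the only non-routine new ingredient compared with the scalar case, is the identification step in Lemma~\ref{lemma:optimization_matrix}, which has already been established. The remainder, thanks to the compactness of ${\cal M}(c_-,c_+)$, is substantially simpler than in the scalar case: neither analogues of assumptions~\eqref{eq:J_contradiction} and~\eqref{eq:J_contradiction2} nor the corresponding contradiction arguments of Sections~\ref{sec:k_n} and~\ref{sec:non_0} need to be invoked.
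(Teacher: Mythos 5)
Your proposal is correct and follows essentially the same route as the paper: compactness of ${\cal M}(c_-,c_+)$ to extract a limit $\overline{k}^{\rm opt}_0$, uniform a priori bounds, the matrix-valued versions of Lemma~\ref{th:multiplication} and Corollary~\ref{coro:multiplication} to pass to the limit in the coupled system, the vanishing of $J_{\eps,H}(k^\star)$ to identify $\overline{u}^H_\star(x)=x_1$, and the second assertion of Lemma~\ref{lemma:optimization_matrix} to conclude $\overline{k}^{\rm opt}_0\,e_1=k^\star\,e_1$. The only (harmless) deviations are that you justify the bound on $\overline{u}^H_{\overline{k}^{\rm opt}_\eps,k_\eps}$ via the uniform coercivity $c_-\leq\overline{k}^{\rm opt}_\eps$ (an energy comparison as in Lemma~\ref{lem:bound_der_J}) rather than via the inequality $J_{\eps,H}(\overline{k}^{\rm opt}_\eps)\leq J_{\eps,H}(k^\star)$ used in the paper, and that you spell out the subsequence-to-whole-sequence step explicitly.
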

We have assumed that $k_\eps = k_{\rm per}(\cdot/\eps)$ where $k_{\rm per}$ satisfies the boundedness and coercivity conditions~\eqref{eq:boundedness+coercivity}. This implies that $c_1 \leq k^\star \leq c_2$ in the sense of symmetric matrices. We can hence choose $c_+ = c_2$ and $c_- = c_1$ in~\eqref{eq:def_cal_M} and we then indeed obtain that $k^\star \in {\cal M}(c_-,c_+)$.

\medskip

In the proof below, we do not make explicit in the notation the fact that the optimal coefficient $\overline{k}^{\rm opt}_\eps$ depends on $H$.

\begin{proof}
  Since $\overline{k}_\eps^{\rm opt}$ belongs to the compact set ${\cal M}(c_-,c_+)$, we know that, up to the extraction of a subsequence, $\overline{k}_\eps^{\rm opt}$ converges to some $\overline{k}^{\rm opt}_0 \in {\cal M}(c_-,c_+)$ when $\eps \to 0$.

The constants $c_-$ and $c_+$ have been chosen so that the homogenized coefficient $k^\star$ is an admissible test coefficient in~\eqref{eq:optim_J_matrix}. Hence, by definition of $I_{\eps, H}$ (which is $I_{\eps, H, h}$ where we have formally set $h=0$), we have in particular
\begin{multline}\label{eq:u_bound_eps_matrix}
  \int_{D \cup D_c} \Big| \nabla \overline{u}^H_{\overline{k}_\eps^{\rm opt}, k_\eps} - e_1 \Big|^2 = J_{\eps, H}(\overline{k}_\eps^{\rm opt}) = I_{\eps, H} \\ \leq J_{\eps, H}(k^\star) = \int_{D \cup D_c} \Big| \nabla \overline{u}^H_{k^\star, k_\eps} - e_1 \Big|^2,
\end{multline}
where $\left(\overline{u}^H_{k^\star, k_\eps},\widecheck{u}_{\eps,k^\star, k_\eps}, \psi^H_{k^\star, k_\eps}\right)$ is the solution to~\eqref{eq:system_k_star} with the constant coefficient $\overline{k} = k^\star$. Since we know that $\left( \overline{u}^H_{k^\star, k_\eps}, \widecheck{u}_{\eps, k^\star, k_\eps}\right)$ is the minimizer of~\eqref{eq:pb_min_H_enriched} (with $h=0$) with $\overline{k} = k^\star$, we can compare its energy with that of the particular choice $\left(\overline{u}^H(x) = x_1, \widecheck{u}_\eps(x) = x_1\right)$. Writing that
$$
{\cal E}\left( \overline{u}^H_{k^\star, k_\eps}, \widecheck{u}_{\eps, k^\star, k_\eps}\right) \leq {\cal E}\left(\overline{u}^H(x) = x_1, \widecheck{u}_\eps(x) = x_1\right),
$$
we obtain, similarly as in~\eqref{eq:u_k_bound}, that $\nabla \overline{u}^H_{k^\star, k_\eps}$ is bounded in $L^2(D \cup D_c)$ uniformly in $H$ and $\eps$. We then infer from~\eqref{eq:u_bound_eps_matrix} that $\overline{u}^H_{\overline{k}_\eps^{\rm opt}, k_\eps}$ is bounded in $H^1(D \cup D_c)$ uniformly in $H$ and $\eps$. Proceeding as in Section~\ref{sec:bounds}, we deduce that $\widecheck{u}_{\eps,\overline{k}^{\rm opt}_\eps,k_\eps}$ (resp. $\psi^H_{\overline{k}^{\rm opt}_\eps,k_\eps}$) is bounded in $H^1(D_c \cup D_f)$ (resp. $W_H^{\rm enrich}$) uniformly in $H$ and $\eps$.

We thus know that, up to the extraction of a subsequence, $\overline{u}^H_{\overline{k}_\eps^{\rm opt}, k_\eps}$ converges strongly in $H^1(D \cup D_c)$ to some $\overline{u}^H_\star$ when $\eps \to 0$, that $\widecheck{u}_{\eps,\overline{k}^{\rm opt}_\eps,k_\eps}$ converges weakly in $H^1(D_c \cup D_f)$ to some $\widecheck{u}_\star$ and that $\psi^H_{\overline{k}^{\rm opt}_\eps,k_\eps}$ converges strongly in $H^1(D_c)$ to some $\psi^H_\star$. We are thus in position to use Corollary~\ref{coro:multiplication}, which shows that the limit $(\overline{u}^H_\star,\widecheck{u}_\star,\psi^H_\star)$ is actually the solution $(\overline{u}^H_{\overline{k}^{\rm opt}_0,k^\star},\widecheck{u}_{\overline{k}^{\rm opt}_0,k^\star},\psi^H_{\overline{k}^{\rm opt}_0,k^\star})$ to~\eqref{eq:arlequin_limit2}.

\medskip

We now claim that the limit $\overline{u}^H_\star$ of $\overline{u}^H_{\overline{k}_\eps^{\rm opt}, k_\eps}$ satisfies
\begin{equation} \label{eq:lim_u_k^opt_bis_matrix}
  \overline{u}^H_\star(x) = x_1 \quad \text{in $D \cup D_c$}.
\end{equation}
Consider, as above, the solution $\left(\overline{u}^H_{k^\star, k_\eps},\widecheck{u}_{\eps,k^\star, k_\eps}, \psi^H_{k^\star, k_\eps}\right)$ to~\eqref{eq:system_k_star} with the constant coefficient $\overline{k} = k^\star$. We have shown that $\overline{u}^H_{k^\star, k_\eps}$ is bounded in $H^1(D \cup D_c)$ uniformly in $H$ and $\eps$, and thus converges (when $\eps \to 0$) to some $\overline{u}^H_{k^\star, \star}$. Proceeding again as in Section~\ref{sec:bounds}, we deduce that $\widecheck{u}_{\eps,k^\star,k_\eps}$ (resp. $\psi^H_{k^\star,k_\eps}$) is bounded in $H^1(D_c \cup D_f)$ (resp. $W_H^{\rm enrich}$) uniformly in $H$ and $\eps$. We are thus in position to use Lemma~\ref{th:multiplication}, which shows that the limit when $\eps \to 0$ of $\left(\overline{u}^H_{k^\star, k_\eps},\widecheck{u}_{\eps,k^\star, k_\eps}, \psi^H_{k^\star, k_\eps}\right)$ satisfies~\eqref{eq:system_contradiction} with $\overline{k} = k^\star$. In view of the first statement of Lemma~\ref{lemma:optimization_matrix}, we deduce that $\overline{u}^H_{k^\star, \star}(x) = x_1$ in $D \cup D_c$. Passing to the limit $\eps \to 0$ in~\eqref{eq:u_bound_eps_matrix}, we get
\begin{multline*}
\int_{D \cup D_c} \Big| \nabla \overline{u}^H_\star - e_1 \Big|^2 = \lim_{\eps \to 0} \int_{D \cup D_c} \Big| \nabla \overline{u}^H_{\overline{k}_\eps^{\rm opt}, k_\eps} - e_1 \Big|^2 \\ \leq \lim_{\eps \to 0} \int_{D \cup D_c} \Big| \nabla \overline{u}^H_{k^\star, k_\eps} - e_1 \Big|^2 = \int_{D \cup D_c} \Big| \nabla \overline{u}^H_{k^\star,\star} - e_1 \Big|^2 = 0,
\end{multline*}
hence~\eqref{eq:lim_u_k^opt_bis_matrix}.

\medskip

We hence have obtained that the constant matrix $\overline{k}^{\rm opt}_0$ is such that the solution $(\overline{u}^H_{\overline{k}^{\rm opt}_0,k^\star},\widecheck{u}_{\overline{k}^{\rm opt}_0,k^\star},\psi^H_{\overline{k}^{\rm opt}_0,k^\star})$ to~\eqref{eq:arlequin_limit2} satisfies $\overline{u}^H_{\overline{k}^{\rm opt}_0,k^\star}(x) = x_1$ in $D \cup D_c$. Using the second statement of Lemma~\ref{lemma:optimization_matrix}, we deduce that $\overline{k}^{\rm opt}_0 \, e_1 = k^\star \, e_1$, which is exactly~\eqref{eq:main_result2_matrix}. This concludes the proof of Theorem~\ref{th:homogenization_matrix}.   
\end{proof}

\section*{Acknowledgments}

The work of the authors is partially supported by ONR under grant N00014-20-1-2691 and by EOARD under grant FA8655-20-1-7043. The last two authors acknowledge the continuous support from these two agencies.

\bibliographystyle{plain}
\bibliography{biblio_standard_layout_FL}

\end{document}